\documentclass{article}
\usepackage{amsmath, amssymb}
\usepackage{amsthm}
\usepackage{multirow,bigdelim}
\usepackage{ascmac}
\usepackage{fancybox}
\usepackage{color}
\usepackage{nccmath}

\usepackage{multicol}

\usepackage[top=30truemm,bottom=30truemm,left=25truemm,right=25truemm]{geometry}
\usepackage{graphicx}
\usepackage{here}
\usepackage{amscd}
\usepackage{tikz-cd} 
\usepackage{adjustbox}

\numberwithin{equation}{section}

\usepackage{tcolorbox}
\tcbuselibrary{breakable}
\usepackage[all]{xy}

\theoremstyle{definition}
\newtheorem{thm}{Theorem}[section]

\newtheorem{lemm}[thm]{Lemma}

\newtheorem{remark}[thm]{Remark}
\newtheorem{coro}[thm]{Corollary}

\newtheorem{main}{Theorem}

\newtheorem{maincoro}{Corollary}

\begin{document}

\title{A Unified Construction of Exceptional Symmetric Spaces}

\author{Yuuki Sasaki}

\date{}

\maketitle

\begin{abstract}
We present a unified construction of all simply connected irreducible exceptional compact symmetric spaces as totally geodesic submanifolds of $E_{8}$.
Using this construction, we obtain a unified description of totally geodesic embeddings between exceptional symmetric spaces by determining all inclusion relations among them.
The resulting inclusion structure exhibits a remarkable and unexpected symmetry, suggesting a deeper geometric relationship among exceptional symmetric spaces.
\end{abstract}


\section{Introduction}

A compact symmetric space is called an exceptional symmetric space if the identity component of the isometry group is an exceptional compact Lie group.
By Cartan's classification of symmetric spaces, any simply connected irreducible exceptional compact symmetric space that is not a compact Lie group is one of the following twelve homogeneous spaces:
\[
\begin{array}{llllllll}
FI : & F_{4}/(Sp(1) \times Sp(3)/\mathbb{Z}_{2}), & 
FII : & F_{4}/Spin(9), \\
EI : & E_{6}/(Sp(4)/\mathbb{Z}_{2}), & 
EII : & E_{6}/(Sp(1) \times SU(6)/\mathbb{Z}_{2}), \\
EIII : & E_{6}/(U(1) \times Spin(10)/\mathbb{Z}_{4}), & 
EIV : & E_{6}/F_{4}, \\
EV : & E_{7}/(SU(8)/\mathbb{Z}_{2}), &
EVI : & E_{7}/(Sp(1) \cdot Spin(12)/\mathbb{Z}_{2}), \\
EVII : & E_{7}/(U(1) \times E_{6}/\mathbb{Z}_{3}), &
EVIII : & E_{8}/(Spin(16)/\mathbb{Z}_{2}), \\
EIX : & E_{8}/(Sp(1) \times E_{7}/\mathbb{Z}_{2}), & 
G :& G_{2}/SO(4).
\end{array}
\]
Throughout this paper, an \emph{exceptional symmetric space} means one of the compact irreducible symmetric spaces of types $FI, FII, EI, \cdots ,EIX$ and $G_{2}/SO(4)$.
Exceptional symmetric spaces exhibit a rich variety of remarkable geometric structures.
For example, $EIII$ and $EVII$ are Hermitian symmetric spaces, and hence K\"{a}hler symmetric spaces.
Furthermore, $G_{2}/SO(4), FI, EII, EVI$, and $EIX$ are Wolf spaces, namely quaternionic K\"{a}hler symmetric spaces.
In addition, $FII, EIII, EVI, EVIII$ admit even Clifford structures that are neither K\"{a}hler nor quaternionic K\"{a}hler \cite{Moroianu-Semmelmann}.
Thus, exceptional symmetric spaces provide important examples of Riemannian manifolds.
Nevertheless, they are usually regarded as a finite collection of individual homogeneous spaces rather than as a unified family.
Moreover, only a few geometric realizations of exceptional symmetric spaces are known.
For example, Rosenfeld introduced the Rosenfeld planes as generalizations of projective planes, and realized $FII, EIII, EVI$, and $EVIII$.
However, this construction does not realize all exceptional symmetric spaces simultaneously.
The primary aim of this paper is to give a unified construction of exceptional symmetric spaces.

Totally geodesic embeddings form an important class of embeddings between symmetric spaces.
Such embeddings between exceptional symmetric spaces were studied by Chen and Nagano \cite{Chen-Nagano1, Nagano} using the $(M^{+}, M^{-})$-method.
More recently, Kollross and Rodr\'{i}guez-V\'{a}zquez classified maximal totally geodesic submanifolds of exceptional symmetric spaces of noncompact type \cite{Kollross}.
These works treat exceptional symmetric spaces individually and do not provide a unified description of totally geodesic embeddings.
The second aim of this paper is to construct these totally geodesic embeddings between exceptional symmetric spaces in a unified way.

The construction is based on a maximal antipodal subgroup $A(E_{8})$ of order $256$ in $E_{8}$ arising in Adams' classification of maximal antipodal sets of $E_{8}$.
Maximal antipodal sets of a compact symmetric space were introduced by Chen-Nagano \cite{Chen-Nagano2}.
In a compact Lie group endowed with a bi-invariant metric, a maximal antipodal set containing the identity element is a subgroup, called a maximal antipodal subgroup.
Maximal antipodal subgroups are precisely maximal elementary abelian $2$-subgroups.
The subgroup $A(E_{8})$ is constructed using the octonions $\mathbb{O}$.
We consider the standard chain of exceptional Lie groups
\[
G_{2} \subset F_{4} \subset E_{6} \subset E_{7} \subset E_{8}.
\]
For each exceptional Lie group $G$ in the above chain and each element $p \in A(E_{8})$, we consider the conjugation orbit
\[
\mathcal{O}_{G}(p) = \{ g p g^{-1} \ ;\ g \in G\}.
\]
Our first main theorem is as follows.

\begin{main}[Theorem \ref{main-1}, \ref{main-2}]
Let $G \in \{ G_{2}, F_{4}, E_{6}, E_{7}, E_{8} \}$ and $p \in A(E_{8})$.
Every positive-dimensional orbit $\mathcal{O}_{G}(p)$ is a compact irreducible exceptional symmetric space.
Conversely, every compact irreducible exceptional symmetric space arises in this way.
Moreover, each $\mathcal{O}_{G}(p)$ is a totally geodesic submanifold of $E_{8}$.
\end{main}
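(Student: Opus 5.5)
We will treat the three assertions separately, relying on one basic fact: for an involution $\sigma$ of a compact Lie group, conjugation orbits of order-two elements are totally geodesic.

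\textbf{Step 1: each orbit is a symmetric space and totally geodesic in $E_{8}$.}
Every nonidentity $p\in A(E_{8})$ satisfies $p^{2}=e$, since $A(E_{8})$ is an elementary abelian $2$-group. Fix $G$ in the chain $G_{2}\subset F_{4}\subset E_{6}\subset E_{7}\subset E_{8}$.

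First we show that $\mathcal{O}_{G}(p)$ is a symmetric space. Consider the inner automorphism $\sigma_{p}=\mathrm{Ad}(p)$ of $E_{8}$. We will check that $\sigma_{p}$ preserves $G$; this should follow from how $A(E_{8})$ is built from $\mathbb{O}$ and from the chain, since each $p$ will normalize $G$. Then $\mathcal{O}_{G}(p)\cong G/Z_{G}(p)$, and $Z_{G}(p)$ lies between $(G^{\sigma_{p}})_{0}$ and $G^{\sigma_{p}}$. Hence the orbit is a compact symmetric space of $G$ with respect to $\sigma_{p}|_{G}$.

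Next we show it is totally geodesic in $E_{8}$. Give $E_{8}$ a bi-invariant metric. The standard argument is to use the Cartan embedding $g\mapsto g\sigma(g)^{-1}$, or equivalently to note the following. The set $\{x\in E_{8} : x^{2}=e\}$ is a union of totally geodesic components; these are the polars of $e$, i.e.\ the fixed-point components of the geodesic symmetry $s_{e}(x)=x^{-1}$. The $E_{8}$-orbit of $p$ is such a polar. Inside it, $\mathcal{O}_{G}(p)$ is the orbit of a symmetric subgroup, so it is the image of $\exp$ of a Lie triple system. Concretely, $T_{p}\mathcal{O}_{G}(p)=\{X-\mathrm{Ad}(p)X : X\in\mathfrak g\}$, which is the $(-1)$-eigenspace of $\mathrm{Ad}(p)$ on $\mathfrak g$. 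This space is a Lie triple system, because $\sigma_{p}$ is an automorphism of $\mathfrak g$. Its exponential image is therefore totally geodesic in $E_{8}$, and it coincides with the orbit, which we check via $\exp(X)\,p\,\exp(-X)=\exp(2X)p$ for $X$ in the $(-1)$-eigenspace.

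\textbf{Step 2: identification of each orbit.}
We will enumerate the $G$-conjugacy classes of elements of $A(E_{8})$ under the action of the Weyl-type group $N_{G}(A)/Z_{G}(A)$. Using the octonionic description, $A(E_{8})$ is essentially generated by sign changes on a basis of $\mathbb{O}$ together with extra factors coming from the chain. For each representative $p$ we compute $\dim\mathfrak g^{\sigma_{p}}$, for example from the trace of $\mathrm{Ad}(p)$ on $\mathfrak g$, which can be read off from the eigenvalue count on the known representations. Since involutions of simple $\mathfrak g$ are determined by $\dim\mathfrak g^{\sigma}$, or equivalently by the signature $\operatorname{tr}\mathrm{Ad}(p)$, this pins down the type among $G_{2}/SO(4)$, $FI,\dots,EIX$, or shows the orbit is a point when $p$ is central in $G$.

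Two further checks are needed. For simple connectivity we show that $Z_{G}(p)$ is connected, using the known fact that centralizers of involutions in simply connected groups are connected. For irreducibility we use that $G$ is simple.

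\textbf{Step 3: the converse.}
We go through the list. $G_{2}/SO(4)$ and $FI,FII$ are realized because $A(E_{8})\cap G$ meets every conjugacy class of involutions of $G_{2}$ and $F_{4}$. For $E_{6}$ there is a subtlety: $EI$ and $EIV$ come from outer involutions. We expect these to arise because certain $p\in A(E_{8})\setminus E_{6}$ normalize $E_{6}$ and act on it by an outer automorphism. This is possible since $p\in E_{8}$ need not lie in $G$, and the orbit in that case is $\{gpg^{-1}\}\cong G/G^{\sigma_{p}}$. For $E_{7}$ and $E_{8}$, inner involutions suffice.

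\textbf{Main obstacle.}
The hardest step will be the explicit case analysis: verifying that every $p$ normalizes each $G$ in the chain, and computing $\operatorname{tr}\mathrm{Ad}(p)|_{\mathfrak g}$ uniformly. We will try to organize this through the decomposition
\[
\mathfrak e_{8}=\mathfrak g_{2}\oplus\mathfrak f_{4}\oplus(\mathbb{O}\otimes\mathfrak{J}_{0})
\]
and its analogues, with $\mathfrak{J}_{0}$ the traceless exceptional Jordan algebra, on which sign changes of $\mathbb{O}$ act transparently.
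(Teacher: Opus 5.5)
\textbf{Forward direction and total geodesicity.} Your Step 1 is essentially the paper's proof of Theorem~\ref{main-1}. The paper shows that $\mathrm{Ad}(p)$ normalizes each $G$ by checking that it preserves $\frak{su}^{\alpha}(2)$, $\frak{su}^{\alpha,\beta}(3)$ and $\frak{g}_{2}'$, whose centralizers define $E_7,E_6,F_4$, and also $\frak{g}_2$. The fixed-point group is connected because $G$ is simply connected, and $\exp F^{-}(\mathrm{Ad}(p),\frak{g})=\mathcal{O}_{G}(p)p^{-1}$ is totally geodesic. Two small points. First, $Z_{G}(p)$ equals $G^{\sigma_p}$ exactly. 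Second, $p$ usually lies outside $G$, so the fact you need is connectedness of fixed-point groups of involutive \emph{automorphisms} of simply connected groups, not of centralizers of involutions of $G$. Note also that the forward direction needs no case analysis: a nontrivial involution of a simply connected exceptional simple group gives one of the twelve types by Cartan's classification. Identifying the type via $\dim\frak{g}^{\sigma_p}$ is exactly what the paper does with Table~\ref{involutions}.

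\textbf{The gap: the converse.} In your proposal the converse consists of expectations, not arguments: no element $p$ is exhibited and no fixed-point dimension is computed. Your remark that for $E_7$ ``inner involutions suffice'' also points the wrong way. Elements of $A(E_8)\cap E_7$ are involutions of $E_7$, so their $E_7$-orbits are polars of $e$ in $E_7$. By Table~\ref{polars} these are only $EVI$ (twice) and the pole $r_1$. An automorphism of type $EV$ or $EVII$ is $\mathrm{Ad}(x)$ with $x\in E_7$ of order four and $x^{2}=\tau_{\alpha}$. If ``inner'' refers to automorphisms, the remark is vacuous, since $E_7$ has no outer automorphisms, and the real content is still missing.

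\textbf{What the missing step requires.} Exactly as for $EI$ and $EIV$, you must take $p\in A(E_8)\setminus E_7$ acting nontrivially on $\frak{su}^{\alpha}(2)$ and compute. The paper uses $p_{2i}=\psi(\gamma_{1}^{0,0},\gamma_{2i}^{0,0})$ and $q_{2i}=\psi(\gamma_{0}^{0,0},\gamma_{2i}^{1,0})$. From $\frak{e}_{7}=\frak{su}^{\bar\alpha}(2)\oplus\frak{spin}^{\alpha}(12)\oplus(\mathbb{O}\otimes^{+}\mathbb{H})\oplus(\mathbb{O}\otimes^{-}\mathbb{H})$ it gets $\dim F^{+}(\mathrm{Ad}(p),\frak{e}_{7})=63$ and $79$. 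For $E_6$ your guess is correct but still has to be realized. The elements $p_j,q_j$ with $4\le j\le 7$ act on $\frak{su}^{\alpha,\beta}(3)$ by an outer automorphism: $\gamma_j$ reverses the complex structure on $\mathrm{span}\{e_2,\dots,e_7\}$. One then computes $\dim F^{+}(\mathrm{Ad}(p),\frak{e}_{6})=36$ and $52$. Likewise, your claim about $F_4$ needs the computations $24$ for $q_0$ and $36$ for $p_0$.

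\textbf{Tits construction.} If you organize this through the Tits construction, it reads $\frak{e}_{8}=\frak{g}_{2}\oplus\frak{f}_{4}\oplus(\mathrm{Im}\,\mathbb{O}\otimes\frak{J}_{0})$; with all of $\mathbb{O}$ the dimension would be $274$. You would also first have to identify $A(E_8)=\psi(A(Spin(8))\times A(Spin(8)))$, which is defined in the $\frak{spin}(16)\oplus V$ model, with sign changes in that model.
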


Since every orbit in Theorem A is a totally geodesic submanifold of $E_{8}$, inclusions between orbits induce totally geodesic embeddings between exceptional symmetric spaces.
Our second main theorem determines all such inclusions.

\begin{main}[Theorem \ref{main-2}]
The positive-dimensional conjugation orbits $\mathcal{O}_{G}(p) \ (G \in \{ G_2, F_4, E_6, E_7, E_8 \}, p \in A(E_{8}))$ are classified as shown in the Table  \ref{all conjugation orbits}.
Moreover, all inclusion relations among these orbits, except $G_2/SO(4)$, are given by the following diagrams.
\[
\small
\xymatrix@C=8pt@R=11pt
{
 & & FII \ar[d] & & FII \ar[d] & & \\
 & & EIII \ar[d] & & EIII \ar[d] & & \\
FI \ar[r] & EII \ar[r] & EVI \ar[dr] & & EVII \ar[dl] & EIII \ar[l] & FII \ar[l] \\
 & & & EIX & & & \\
FII \ar[r] & EIV \ar[r] & EVII \ar[ur] & & EVII \ar[ul] & EIV \ar[l] & FII \ar[l] \\
 & & EIV \ar[u] & & EIV \ar[u] & & \\
 & & FII \ar[u] & & FII, \ar[u] & & \\
}
\quad
\xymatrix@C=8pt@R=11pt
{
 & & FI \ar[d] & & FI \ar[d] & & \\
 & & EII \ar[d] & & EII \ar[d] & & \\
FII \ar[r] & EIII \ar[r] & EVI \ar[dr] & & EV \ar[dl] & EII \ar[l] & FI \ar[l] \\
 & & & EVIII & & & \\
FI \ar[r] & EI \ar[r] & EV \ar[ur] & & EV \ar[ul] & EI \ar[l] & FI \ar[l] \\
 & & EI \ar[u] & & EI \ar[u] & & \\
 & & FI \ar[u] & & FI. \ar[u] & & \\
}
\]
Each orbit of type $FI$ contains exactly four distinct conjugation orbits isometric to $G_2/SO(4)$.

\begin{table}[h]
\centering
\caption{All conjugation orbits} \label{all conjugation orbits}
\begin{tabular}{lcc|lcc|lcclll} \hline
$G$ & orbit type & $\begin{matrix} \text{number of} \\ \text{distinct orbits} \end{matrix}$ & 
$G$ & orbit type & $\begin{matrix} \text{number of} \\ \text{distinct orbits} \end{matrix}$ &
$G$ & orbit type & $\begin{matrix} \text{number of} \\ \text{distinct orbits} \end{matrix}$ \\ \hline
$G_{2}$ & $G_2/SO(4)$ & 32 & $E_{6}$ & $EI$ & 4 & $E_{7}$ & $EV$ & 3 \\ 
$F_{4}$ & $FI$ & 8 & $E_{6}$ & $EII$ & 4 & $E_{7}$ & $EVI$ & 2 \\
$F_{4}$ & $FII$ & 8 & $E_{6}$ & $EIII$ & 4 & $E_{7}$ & $EVII$ & 3 \\ 
&&& $E_{6}$ & $EIV$ & 4 & $E_{8}$ & $EVIII$ & 1 \\
&&&&&& $E_{8}$ & $EIX$ & 1 \\ \hline
\end{tabular}
\end{table}

\end{main}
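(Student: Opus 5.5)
The plan is to make $A(E_{8})$ completely explicit. I would model $E_{8}$ through the octonionic (Freudenthal--Tits) description and realize $A(E_{8}) \cong (\mathbb{Z}_{2})^{8}$ as generated by sign-change elements built from the basis $\{1,e_1,\dots,e_7\}$ of $\mathbb{O}$. The order of work would be the following.

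First, for each $G$ in the chain $G_{2} \subset F_{4} \subset E_{6} \subset E_{7} \subset E_{8}$, describe the intersections $A(E_{8}) \cap G$ and the action of $A(E_{8})$ on the Lie algebras of the chain. Each $p \in A(E_{8})$ satisfies $p^{2}=e$, so $\mathrm{Ad}(p)$ is an involution of $\mathfrak{e}_{8}$. The chain is stable under conjugation by the relevant elements, so I expect $\mathrm{Ad}(p)$ to preserve $\mathfrak{g}$. Then $\mathcal{O}_{G}(p) \cong G/Z_{G}(p)$, and $Z_{G}(p)$ is (up to components) the fixed-point group of the involution $\sigma_{p} = \mathrm{Ad}(p)|_{G}$.

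Second, show that $\mathcal{O}_{G}(p)$ is totally geodesic in $E_{8}$. If $\sigma_p$ is inner in $G$, I would use the standard fact that the conjugacy class of an involution in a compact group is a totally geodesic submanifold. In general, $\mathcal{O}_{G}(p)\,p^{-1} = \{ g\,\sigma_p(g)^{-1} \}$ is the Cartan embedding of $G/G^{\sigma_p}$, which is totally geodesic in $G$; since $G$ is totally geodesic in $E_{8}$, the translate $\mathcal{O}_{G}(p)$ is totally geodesic in $E_{8}$. This also shows that the orbit is the symmetric space $G/G^{\sigma_p}$. Positive dimension forces $\sigma_p \neq \mathrm{id}$, and irreducibility holds because $G$ is simple.

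Third, identify the type of $\sigma_p$ for each $p$. I would compute $\dim \mathfrak{g}^{\sigma_p}$, or equivalently the trace of $\mathrm{Ad}(p)$ on $\mathfrak{g}$, from character values of $p$ on the octonion, Jordan-algebra and Freudenthal-triple pieces in the decomposition
\[
\mathfrak{e}_{8} = \mathfrak{g}_{2} \oplus \mathfrak{f}_{4} \oplus (\mathbb{O}_0 \otimes \mathfrak{J}_0)\ \text{(Tits)}.
\]
The dimension of the fixed algebra determines the Cartan type except where inner and outer involutions must be separated. For $E_{6}$ this means telling $EI$, $EIV$ (outer) from $EII$, $EIII$; for those cases I would additionally check whether $\sigma_p$ fixes a Cartan subalgebra. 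Counting the elements $p$ in each class, modulo the identification $\mathcal{O}_G(p)=\mathcal{O}_G(p')$ exactly when $p' \in \mathcal{O}_G(p)$, should produce Table \ref{all conjugation orbits}. I would use the $W$-type symmetry of $A(E_{8})$, namely its normalizer, to reduce the casework. Surjectivity onto the twelve types then follows by inspecting the table.

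Fourth, and this is the main obstacle, determine the inclusions. The relation $\mathcal{O}_{G}(p) \subset \mathcal{O}_{G'}(p')$ requires $G \subset G'$ and $p \in \mathcal{O}_{G'}(p')$. The latter is equivalent to $p$ and $p'$ being $G'$-conjugate, which I would decide by comparing the conjugacy invariants (traces) from the third step within $A(E_{8})$. The hard part is the bookkeeping. For instance, one $EVI$ in $E_{7}$ must contain exactly the listed $EII$ and $EIII$ orbits and no others. Doing this requires tracking which $E_{6}$-orbits land in which $E_{7}$-conjugacy class, for all $256$ elements. I would organize it by the $\mathbb{Z}_2$-linear structure, viewing $A(E_{8})$ as $(\mathbb{Z}_2)^{8}$ with the subgroups $A\cap G$ nested. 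Finally, the four $G_{2}/SO(4)$ inside each $FI$ come from counting the $p \in A\cap G_2$ whose $F_4$-class gives $FI$: there are $32$ of these distributed over $8$ orbits.
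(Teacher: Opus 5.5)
\textbf{The gap: deciding conjugacy by traces.} Your first three steps match the paper's proof of Theorem~\ref{main-1} and its dimension computations; only the model differs (Tits instead of $\mathfrak{spin}(16)\oplus V$). No inner/outer test is needed for $E_6$, since the fixed dimensions $36,38,46,52$ already separate $EI,EII,EIII,EIV$.

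The gap is in your fourth step, and it also undermines the orbit counts in the third. Both require partitioning $A(E_8)$ into $G$-orbits for each $G$ in the chain. You propose to decide whether $p'\in\mathcal{O}_G(p)$ by comparing traces of $\mathrm{Ad}(p)$. Traces are necessary invariants, but they do not separate the orbits the theorem asserts are distinct.
\begin{itemize}
\item The elements $p_1,\dots,p_7$ have identical traces on every $F_4$-stable summand of $\mathfrak{e}_8$. This holds on $\mathfrak{f}_4$ (type $FI$), on $\mathfrak{g}_2'$ (each acts there as an involution with fixed algebra $\mathfrak{so}(4)$), and on the complement. Yet they lie in the seven distinct orbits $FI_1,\dots,FI_7$.
\item The elements $p_2,p_4,p_6$ have the same traces on $\mathfrak{e}_8$, on $\mathfrak{e}_7$ and on $\mathfrak{su}^{\alpha}(2)$, but lie in $EV_1,EV_2,EV_3$.
\end{itemize}
A trace-based test would merge these orbits and produce wrong entries in Table~\ref{all conjugation orbits} and wrong diagrams. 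In the other direction, agreement of invariants never by itself shows $p'\in\mathcal{O}_G(p)$. You have to exhibit conjugating elements, and the proposal has no mechanism for that.

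\textbf{What the paper supplies.} To separate orbits, note that $G$ is the identity component of the centralizer of $S=SU^{\alpha}(2)$, $SU^{\alpha,\beta}(3)$ or $G_2'$. Hence the automorphism $\mathrm{Ad}(p)|_{\mathfrak{s}}$ itself, not merely its trace, is constant on $\mathcal{O}_G(p)$. Equivalently, $p'=gpg^{-1}$ forces $p'p^{-1}=g\,\theta_p(g)^{-1}\in G\cap A(E_8)$, so each orbit lies in a single coset of $A(E_8)\cap G$.

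To prove conjugacy inside a coset, the paper uses three tools.
\begin{itemize}
\item For points of $A(E_8)\cap G$ it invokes Chen--Nagano's polar classification: each polar of $e$ in $G$ is a single $G$-orbit.
\item For other cosets it translates by elements $r_k$ commuting with $G$, using $\mathcal{O}_G(r_kp)=r_k\mathcal{O}_G(p)$. This gives $FI_k=r_kFI_+$, $FII_k=r_kFII_+$, $EII_i=r_iEII_+$ and $EIII_i=r_iEIII_+$.
\item Where no such $r_k$ exists, it uses explicit subgroups of $G$ whose orbits sweep out the remaining points of $A(EV_i)$, $A(EVII_i)$, $A(EI_j)$ and $A(EIV_j)$. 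These are $\phi(Spin^0(8))$, $SU^{x_5+x_6}(2)\times SU^{x_5-x_6}(2)\times SU^{x_7+x_8}(2)$, and a two-dimensional torus $H$.
\end{itemize}
Your $\mathbb{Z}_2$-linear bookkeeping could host the coset invariant, but as written neither direction of the conjugacy question is handled.

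\textbf{The $G_2/SO(4)$ count.} This uses the wrong set. $A(E_8)\cap G_2$ has only eight elements, all in one orbit. The $32$ positive-dimensional $G_2$-orbits are $\mathcal{O}_{G_2}(\psi(\gamma_1^{0,0},\gamma_l^{a,b}))$, through points mostly outside $G_2$. Placing four of them in each $FI$-orbit again needs the $F_4$-orbit partition of $A(E_8)$. It also needs a separate argument that the four orbits with fixed $l$ are distinct.
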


The classification also reveals a close relation between the conjugation orbits and the chain of division algebras
\[
\mathbb{R} \subset \mathbb{C} \subset \mathbb{H} \subset \mathbb{O}.
\]
More precisely, the orbits are characterized by the adjoint representation together with the data arising from the orthogonal complements associated with this chain.
This relation is also reflected in the number of the orbits.
Indeed, as the group becomes larger along the chain $F_{4} \subset E_{6} \subset E_{7} \subset E_{8}$, the numbers of orbits of the smaller group contained in an orbit of the larger group are 1,2, and 4.
These numbers correspond to the dimensions of the relevant orthogonal complements arising from $\mathbb{R} \subset \mathbb{C} \subset \mathbb{H} \subset \mathbb{O}$.

Combining Theorem B with nonexistence results for totally geodesic embeddings, we obtain the following criterion.

\setcounter{maincoro}{2}
\begin{maincoro}[Corollary \ref{main-3}]
Let $M$ and $N$ be distinct symmetric spaces represented in the diagram.
\[
\xymatrix@C=15pt@R=10pt
{
 & & & E_{8} & & & \\
 & & EVIII \ar[ur] & & EIX \ar[ul] & &  \\
 & EV \ar[ur] & & EVI \ar[ul] \ar[ur] & & EVII \ar[ul] & \\
 EI  \ar[ur] & & EII \ar[ul] \ar[ur] & & EIII \ar[ul] \ar[ur] & & EIV \ar[ul] \\
 & FI \ar[ul] \ar[ur] & & & & FII. \ar[ul] \ar[ur] & \\
}
\]
There exists a totally geodesic embedding $M \to N$ if and only if there is a directed path from $M$ to $N$ in the diagram.
\end{maincoro}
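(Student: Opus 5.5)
The plan has two directions: the ``if'' direction follows from Theorem B, and the ``only if'' direction follows from known nonexistence results.

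\emph{Existence (``if'').} For each arrow in the diagram of Corollary C, I will exhibit a totally geodesic embedding by reading it off the inclusion diagrams of Theorem B. Each arrow $M\to N$ among symmetric spaces appears as an inclusion $\mathcal{O}_{G}(p)\subset\mathcal{O}_{G'}(p')$ of conjugation orbits there. Examples are $FI\to EI$, $FI\to EII$, $FII\to EIII$, $FII\to EIV$, $EI\to EV$, $EII\to EV$, $EII\to EVI$, $EIII\to EVI$, $EIII\to EVII$, $EIV\to EVII$, $EV\to EVIII$, $EVI\to EVIII$, $EVI\to EIX$ and $EVII\to EIX$. Since both orbits are totally geodesic in $E_8$ by Theorem A, the smaller one is totally geodesic in the larger one. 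The arrows $EVIII\to E_8$ and $EIX\to E_8$ are the orbits themselves, which are totally geodesic in $E_8$. A composition of totally geodesic embeddings is totally geodesic, so every directed path gives an embedding.

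\emph{Nonexistence (``only if'').} I must show that no totally geodesic embedding $M\to N$ exists when there is no directed path from $M$ to $N$. First, a totally geodesic submanifold of a compact symmetric space is again symmetric with rank and dimension at most those of the ambient space. This excludes many pairs, for instance anything into a space of smaller dimension, and embeddings of rank-$4$ $EI$ into rank-$2$ spaces such as $EIII$, $EIV$, $EVII$ or $EIX$. The lines here are $\dim FI=28$, $\dim FII=16$, $\dim EI=42$, $\dim EII=40$, $\dim EIII=32$, $\dim EIV=26$, $\dim EV=70$, $\dim EVI=64$, $\dim EVII=54$, $\dim EVIII=128$, $\dim EIX=112$, and ranks $4,1,6,4,2,2,7,4,3,8,4$. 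Second, I will use parity and holonomy obstructions. For example, $EIV$ is not contained in $EIII$, $EV$ or $EVI$, because a totally geodesic $EIV$ would force its isotropy $F_4$ into the relevant isotropy group, and $F_4\not\subset Sp(1)\cdot Spin(12)$ or $SU(8)/\mathbb Z_2$ or $U(1)\cdot Spin(10)$ by dimension and representation considerations. Third, for the remaining maximal-type pairs I will cite the classification of maximal totally geodesic submanifolds in exceptional symmetric spaces: Kollross and Rodr\'{i}guez-V\'{a}zquez in the noncompact case, transferred to the compact dual by duality, together with Chen--Nagano's $(M^+,M^-)$ results. These lists show, for example, that $FI\not\to EIII$, $FII\not\to EI$, $FII\not\to EII$, $EI\not\to EVI$, $EII\not\to EVII$ and $EVII\not\to EVIII$.

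\emph{Main obstacle.} The main work is the nonexistence direction: going pair by pair and checking that each non-path pair is excluded by rank, dimension, or the maximal-submanifold lists. I would organize this by the target $N$. For each $N$, take its maximal totally geodesic submanifolds from the known classification and intersect downward recursively. That way ``no path'' reduces to the statement that $M$ does not occur in the recursively computed set of totally geodesic submanifolds of $N$. The delicate point is making sure that non-simply-connected quotients or products appearing in those lists cannot contain $M$; the product case should be handled by the irreducibility and rank of $M$.
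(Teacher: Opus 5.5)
Your existence direction is the same as the paper's. For nonexistence you take a different and heavier route.

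The paper does not go pair by pair. It observes that pre- or post-composing with existing arrows reduces the cross-level non-path pairs to four extremal ones, and it excludes those directly:
\begin{itemize}
\item $FI\to EVII$ and $EI\to EIX$ by rank ($4>3$ and $6>4$);
\item $FII\to EV$ and $EIV\to EVIII$, because they would give $\frak{spin}(9)\subset\frak{su}(8)$ resp.\ $\frak{f}_{4}\subset\frak{so}(16)$. This is impossible, since the smallest nontrivial representations have dimensions $9$ and $26$.
\end{itemize}

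Your third tool, the Kollross--Rodr\'{i}guez-V\'{a}zquez and Chen--Nagano lists transferred by duality, is unnecessary. Every example you defer to it already falls to your first two tools:
\begin{itemize}
\item $FI\not\to EIII$, $EI\not\to EVI$ and $EII\not\to EVII$ by rank ($4>2$, $6>4$, $4>3$);
\item $FII\not\to EI$ because $\frak{spin}(9)$ and $\frak{sp}(4)$ have the same dimension $36$ but are not isomorphic;
\item $FII\not\to EII$ because $\frak{spin}(9)$ has no nontrivial representation on $\mathbb{C}^{6}$;
\item $EVII\not\to EVIII$ because $\frak{e}_{6}$ has no nontrivial representation of dimension at most $16$.
\end{itemize}
Dropping the lists also removes your ``delicate point''. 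If you keep them, note that irreducibility alone does not confine $M$ to one factor of a product, since diagonal embeddings are totally geodesic. The right mechanism is that a simple Lie triple system projects to each factor either trivially or injectively.

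What your uniform isotropy count buys is completeness. Applied systematically, the paper's factorization also disposes of same-group pairs such as $EIX\to EVIII$, which would yield $EIV\to EVII\to EIX\to EVIII$. It does not dispose of $EII\to EI$, which escapes dimension, rank and factorization and is not treated in the paper. Your argument handles it at once, since $\dim(\frak{sp}(1)\oplus\frak{su}(6))=38>36=\dim\frak{sp}(4)$.

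One slip to correct: you call $EI$ rank $4$ and $EVII$, $EIX$ rank $2$, while your own table correctly gives $6$, $3$, $4$. With rank $4$ the exclusion of $EI\to EIX$ would fail. With the correct rank $6$ it holds, and it gives $EI\not\to EVI$ as well.
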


This diagram was essentially obtained by Chen and Nagano \cite{Chen-Nagano1} and was later displayed by Nagano \cite{Nagano}.
However, they did not provide detailed proofs of the inclusion relations appearing in the diagram.
Moreover, their approach treats exceptional symmetric spaces individually. 
In contrast, the present construction provides a unified derivation of all such inclusions.

A particularly striking feature of the diagram is its unexpected left-right symmetry, which was not discussed by Chen-Nagano or Nagano.
Interestingly, the same correspondence appears in several different geometric properties of exceptional symmetric spaces.
This indicates that the symmetry is not merely a feature of the diagram and suggests a deeper geometric relationship among exceptional symmetric spaces.
We discuss this phenomenon in Section 5.

The present paper is organized as follows.
Section 2 collects the preliminary material.
In Subsection 2.1, we recall the notion of polars and antipodal sets.
In Subsection 2.2, we briefly review root systems.
In Subsection 2.3, we recall the construction of spin groups via Clifford algebras.
Subsection 2.4 is devoted to octonions. 
We define $G_{2}$ and $Spin(8)$ using the octonions.
In Section 3, we review the construction of the exceptional group $E_{8}$ and its subgroups $E_{7}, E_{6}$, and $F_{4}$ from \cite{Adams-L}.
Section 4 contains the main results of the paper.
In Subsection 4.1, we recall the maximal antipodal subgroup of $E_{8}$ from \cite{Adams}.
We study the individual conjugation orbits in Subsections 4.2-4.6.
In Subsection 4.7, we complete the proof of the main theorem.
In Subsection 4.8, we discuss geometric features of some chains in the diagram in Corollary C.
In Section 5, we discuss the left-right symmetry of the diagram in Corollary C and its geometric implications.

\paragraph{Acknowledgments.}
This work was supported by JSPS KAKENHI Grant Number JP23K12980 and JP26K16977.
The author would like to thank Jost Eschenburg for his valuable comments and suggestions on the manuscript.


\section{Preliminaries}

We collect the notation used throughout the paper.
Let $A$ be a set and $\phi : A \to A$ be a map.  
The fixed point set of $\phi$ is denoted by $F(\phi, A)$.
Let $V$ be a vector space over $\mathbb{R}$ or $\mathbb{C}$.
For an involutive automorphism $\phi$ of $V$, denote $F^{\pm}(\phi, V) = \{ v \in V \ ;\ \phi(v) = \pm v \}$.
If $V$ is a real vector space, then its complexification is denoted by $V^{\mathbb{C}}$.
Let $G$ be a Lie group with Lie algebra $\frak{g}$.
The identity element is denoted by $e$.
Denote the center of $G$ by $C(G)$.
For $g \in G$, define $\theta_{g} : G \rightarrow G ; p \mapsto gpg^{-1}$.
Let $H$ be a subgroup of $G$.
Set $\mathcal{O}_{H}(p) = \{ \theta_{h}(p) = hph^{-1} \ ;\ h \in H \}$.
Let $W$ be a Lie subgroup of $G$ with Lie algebra $\frak{w}$.
The centralizer of $W$ in $G$ are denoted by $C(W, G)$.
Their Lie algebras are the centralizer $C(\frak{w}, \frak{g})$ of $\frak{w}$ in $\frak{g}$.
For a homomorphism $f : G \to G'$ between Lie groups, the induced homomorphism between Lie algebras will also be denoted by $f$.
Let $V$ be a vector space, and let $\rho : G \to GL(V)$ be a representation.
The induced representation of $\frak{g}$ is denoted by the same symbol $\rho$.  
For an involutive element $g \in G$, we abbreviate $F^{\pm}(\rho(g),V)$ to $F^{\pm}(g,V)$.
Finally, the diagonal matrix with diagonal components $a_{1}, \dots, a_{n}$ is denoted by $\mathrm{d}(a_{1}, \dots, a_{n})$.


\subsection{Polars and antipodal sets} \label{polars and antipodal sets}

We first recall the notion of polars introduced by Chen-Nagano \cite{Chen-Nagano1}.
Let $M$ be a connected compact symmetric space.
The symmetry at $p \in M$ is denoted by $s_{p}$.
Fix $o \in M$.
Each connected component of $F(s_{o}, M)$ is called a {\it polar} of $o$.
If a polar consists of a single point, then the polar is called a {\it pole} of $o$.
Obviously, $\{ o \}$ is a pole of $o$ and called the trivial pole.
In the present paper, the polars refer to the nontrivial polars.
Since a polar is a connected component of the fixed point set of an isometry, any polar is a totally geodesic submanifold and hence a compact symmetric space.
Let $G$ be the identity component of the isometry group of $M$ and $K_{o} = \{ g \in G \ ;\ g(o) = o \}$.
Then, $K_{o}$ acts transitively on each polar.

For example, let $H$ be a connected compact Lie group.
Then, $H$ with a bi-invariant metric is a compact symmetric space.
The geodesic symmetry at $g \in H$ is given by $s_{g}(h) = gh^{-1}g$ for $h \in H$.
Hence, $F(s_{e}, H) = \{ h \in H \ ;\ h^{2} = e \}$.
Each polar of $e$ is an orbit $\mathcal{O}_{H}(p)$, where $p$ is an involutive element.
Moreover, if $C(H)$ has a nontrivial involutive element $z$, then $z$ is a pole of $e$.
Chen-Nagano classified polars in all irreducible compact symmetric spaces \cite{Chen-Nagano1}.
Table \ref{polars} lists all polars in $G_{2}, F_{4}, E_{6}, E_{7}, E_{8}$.

\begin{table}[h]
\centering
\caption{Polars of $G_2, F_4, E_6, E_7, E_8$}
\label{polars}
\begin{tabular}{|c||c|c|c|c|c|cccccccccccccccccccccccccccc} \hline
$G$ & $G_{2}$ & $F_{4}$ & $E_{6}$ & $E_{7}$ & $E_{8}$ \\ \hline
polars & $G_{2}/SO(4)$ & $FI, FII$ & $EII, EIII$ & $EVI \times 2$, a pole & $EVIII, EIX$ \\ \hline
\end{tabular}
\end{table}

We now recall the notion of antipodal sets.
Let $A \subset M$.
If $s_{p}(q) = q$ for any $p,q \in A$, then $A$ is called an {\it antipodal set} of $M$.
It is known that any antipodal set is a finite discrete set \cite{Tanaka-Tasaki}.
If an antipodal set $A$ is maximal with respect to inclusion among antipodal sets, then $A$ is called a {\it maximal} antipodal set of $M$.
The maximum of the cardinalities of antipodal sets is called the {\it $2$-number} of $M$ and denoted by $\#_{2}M$.
An antipodal set with cardinality $\#_{2}M$ is called a {\it great} antipodal set of $M$.
A maximal antipodal set of the compact Lie group $G$ containing $e$ becomes a subgroup of $G$.
Hence, this subgroup is called a {\it maximal antipodal subgroup}.
Note that any maximal antipodal subgroup is a maximal elementary abelian $2$-subgroup. 
Conversely, any elementary abelian $2$-subgroup is an antipodal set of $G$.


\subsection{Root system}\label{Rs}

We recall some basic properties of root systems.
Let $G$ be a simply connected simple compact Lie group with Lie algebra $\frak{g}$, and let $T$ be a maximal torus of $G$ with Lie algebra $\frak{t}$.
Let $\overline{\,\cdot\,}$ be the complex conjugate of $\frak{g}^{\mathbb C}$ such that the fixed subspace is $\frak{g}$.
Let $(\, , \,)$ be the Killing form of $\frak{g}^{\mathbb C}$.
Then, $\langle X,Y\rangle = - (X,Y) \ (X, Y \in \frak{g})$ is an $\mathrm{Ad}(G)$-invariant inner product on $\frak{g}$.
The root system of $\frak{g}^{\mathbb{C}}$ with respect to $\frak{t}^{\mathbb{C}}$ is denoted by $\Sigma(\frak{g}, \frak{t})$ or $\Sigma$.
Choose a linear order on $\frak{t}$, and let $\Sigma^{+}$ denote the corresponding set of positive roots.
For any subset $B \subset \Sigma$, write $B^{+} = B \cap \Sigma^{+}$.
Let $H_\alpha \in i\frak{t}$ with $(H,H_{\alpha}) = \alpha(H)$ for any $H \in i\frak{t}$.
We denote $(\alpha, \beta)=(H_{\alpha}, H_{\beta})$ for $\alpha, \beta \in \Sigma$.
For any $\alpha \in \Sigma$ and $n \in \mathbb{Z}$, define $\Sigma_{\alpha, n}(\frak{g}, \frak{t}) = \{ \beta \in \Sigma \ ;\ 2(\alpha, \beta)/(\alpha, \alpha) = n \}$.
This set is often denoted by $\Sigma_{\alpha, n}$.
Set $A_{\alpha} = (2/(\alpha, \alpha))H_{\alpha}$ and $\tau_{\alpha} = \mathrm{exp} \ \pi(iA_{\alpha})$.
Since $\tau_{\alpha}$ is involutive, its inner automorphism $\theta_{\tau_{\alpha}}$ is involutive.
For each $\alpha\in\Sigma$, the root space is denoted by $\frak{r}_{\alpha}(\frak{g}^{\mathbb{C}}, \frak{t}^{\mathbb{C}})$ or $\frak{r}^{\mathbb{C}}_{\alpha}$.
Then, $\overline{\frak{r}_{\alpha}^{\mathbb{C}}} = \frak{r}_{-\alpha}^{\mathbb{C}}$.
We set $\frak{r}_{\alpha} = \frak{g} \cap (\frak{r}_{\alpha}^{\mathbb{C}} + \frak{r}_{-\alpha}^{\mathbb{C}})$.
Accordingly,
\[
\frak{g} = \frak{t} \oplus \bigoplus_{\alpha \in \Sigma^{+}}\frak{r}_{\alpha}
\]
is an orthogonal direct sum decomposition.
For each $\alpha\in\Sigma$, set
\[
\frak{su}^{\alpha}(2) = \mathbb{R}(iA_{\alpha}) \oplus \frak{r}_{\alpha}.
\]
Then, $\frak{su}^{\alpha}(2)$ is a subalgebra of $\frak{g}$ and isomorphic to $\frak{su}(2)$.
Let $\frak{t}^{\alpha}$ be the orthogonal complement of $\mathbb{R}(iA_{\alpha})$ in $\frak{t}$.
We have the orthogonal direct sum decomposition
\[
C(\frak{su}^{\alpha}(2), \frak{g}) = \frak{t}^{\alpha} \oplus \bigoplus_{\gamma \in \Sigma_{\alpha, 0}^{+}}\frak{r}_{\gamma}.
\]
If $\alpha, \beta \in \Sigma$ satisfy $(\alpha, \alpha) = (\beta, \beta)$ and $2(\alpha, \beta)/(\alpha, \alpha) = \pm1$, then $\{ \pm \alpha, \pm \beta, \pm (\alpha \mp \beta) \}$ is a root system of type $A_{2}$.
Set
\[
\frak{su}^{\alpha, \beta}(3) = \mathbb{R}(iA_{\alpha}) \oplus \mathbb{R}(iA_{\beta}) \oplus \bigoplus_{\gamma \in \{ \alpha, \beta, \alpha \mp \beta \} }\frak{r}_{\gamma}.
\]
Then, $\frak{su}^{\alpha, \beta}(3)$ is a subalgebra of $\frak{g}$ and isomorphic to $\frak{su}(3)$.
Let $\frak{t}^{\alpha, \beta}$ be the orthogonal complement of $\mathbb{R}(iA_{\alpha}) \oplus \mathbb{R}(iA_{\beta})$ in $\frak{t}$.
We have the orthogonal direct sum decomposition 
\[
C(\frak{su}^{\alpha, \beta}(3), \frak{g}) = \frak{t}^{\alpha, \beta} \oplus \bigoplus_{\gamma \in (\Sigma_{\alpha, 0} \cap \Sigma_{\beta, 0})^{+}}\frak{r}_{\gamma}.
\]


\subsection{Spin group}\label{Sg}

In this subsection, we recall the Clifford algebra, the spin group, and the spin representation.
Let $(\ ,\ )$ denote the standard inner product on $\mathbb{R}^{n}$ and let $T(\mathbb{R}^{n})$ be the tensor algebra over $\mathbb{R}^{n}$.
Let $J$ be the two-sided ideal of $T(\mathbb{R}^{n})$ generated by $v \otimes v + (v,v)1$ for any $v \in \mathbb{R}^{n}$.
The {\it Clifford algebra} over $\mathbb{R}^{n}$ is 
\[
Cl(\mathbb{R}^{n}) = T(\mathbb{R}^{n})/J.
\]
The product of $u, v \in Cl(\mathbb{R}^{n})$ is denoted by $uv$.
Let $e_{0}, \cdots, e_{n-1}$ be the standard orthonormal basis of $\mathbb{R}^{n}$.
The defining relation is $e_{i}e_{j} + e_{j}e_{i} = -2\delta_{ij}$ for $0 \leq i, j \leq n-1$.
Let $\pi : T(\mathbb{R}^{n}) \rightarrow Cl(\mathbb{R}^{n})$ be the natural projection.
For $u \in \mathbb{R}^{n} \subset Cl(\mathbb{R}^{n})$, the inverse of $u$ in the Clifford algebra is denoted by $u^{-1}$.
If $u \not= 0$, then $u^{-1} = (-1/||u||^{2})u$.
In particular, if $||u|| = 1$, the inverse is $u^{-1} = -u$.
Let $v \in \mathbb{R}^{n}$ with $||v|| = 1$.
For $x \in \mathbb{R}^{n}$, since $vxv^{-1} = -x + 2(v,x)v$, the map $\mathbb{R}^{n} \ni x \mapsto vxv^{-1} \in \mathbb{R}^{n}$ is the composition of the reflection in the hyperplane orthogonal to $v$ with multiplied by $-1$.
Set 
\[
Pin(n) = \{ v_{1}v_{2} \cdots v_{p} \ ;\ v_{i} \in \mathbb{R}^{n}, ||v_{i}|| = 1 \} \subset Cl(\mathbb{R}^{n}).
\]
This is a compact Lie group and called the {\it pin group}.
It follows that
\[
\pi : Pin(n) \rightarrow O(n), \ \pi(g)(x)=gxg^{-1} \ (g \in Pin(n),\ x \in \mathbb{R}^{n})
\]
is a double covering homomorphism.
The map $Pin(n) \ni g \mapsto -g \in Pin(n)$ is the corresponding deck transformation.
Define
\[
T(\mathbb{R}^{n})^{0} = \bigoplus_{k\ge0} \otimes^{2k} \mathbb{R}^{n},
\qquad
T(\mathbb{R}^{n})^{1} = \bigoplus_{k\ge0} \otimes^{2k+1} \mathbb{R}^{n}.
\]
Set
\[
Spin(n) = Pin(n) \cap Cl(\mathbb{R}^{n})^{0} = \{ v_{1} \cdots v_{2m} \in Pin(n) \ ;\ v_{i} \in \mathbb{R}^{n}, ||v_{i}|| = 1\}.
\]
Then, $Spin(n)$ is a compact Lie group and called the {\it spinor group}.
The group $Pin(n)$ has two connected components, and $Spin(n)$ is the identity component.
The restriction of $\pi$ yields a double covering $\pi : Spin(n) \rightarrow SO(n)$.
The center $C(Spin(n))$ of $Spin(n)$ is 
\[
C(Spin(n)) = 
\begin{cases}
\{ \pm 1, \pm e_{1}e_{2} \cdots e_{n} \} \cong \mathbb{Z}_{2} \times \mathbb{Z}_{2} & (n = 4m), \\
\{ \pm 1, \pm e_{1}e_{2} \cdots e_{n} \} \cong \mathbb{Z}_{4} & (n = 4m+2), \\
\{ \pm 1 \} \cong \mathbb{Z}_{2} & (n = 2m+1). \\
\end{cases}
\]

Throughout this subsection, we assume that $n = 2m$.
The Lie algebra of $Spin(n)$ is 
\[
\frak{spin}(n) = \Big\{ \sum_{0 \leq i < j \leq n-1}c_{ij}e_{i}e_{j} \in Cl(\mathbb{R}^{n})^{0} \ ;\ c_{ij} \in \mathbb{R} \Big\}.
\]
The Lie bracket $[\ ,\ ]$ is given by $[X,Y] = XY - YX$ for $X,Y \in \frak{spin}(n)$, where $XY$ is the product of $X$ and $Y$ in $Cl(\mathbb{R}^{n})$.
Let $E_{ij} \in \frak{so}(n) \ (i < j)$ be defined by
\[
E_{ij}(e_{k}) = 
\begin{cases}
e_{j} & ( k = i ) , \\
-e_{i} & ( k = j ), \\
0 & (\text{otherwise}).
\end{cases}
\]
We have $\pi(e_{i}e_{j}) = 2E_{ij}$.
The adjoint representation is given by $\mathrm{Ad}(g)X = gXg^{-1}$ for any $g \in Spin(n)$ and $X \in \frak{spin}(n)$.
For any $\theta_{1}, \cdots, \theta_{m} \in \mathbb{R}$, set
\[
\begin{split}
& t_{n}(\theta_{1}, \cdots, \theta_{m}) = \sum_{i=1}^{m}\frac{\theta_{i}}{2}e_{2i-2}e_{2i-1} \in \frak{spin}(n), \\
& t_{n}'(\theta_{1}, \cdots, \theta_{m}) = \sum_{i=1}^{m}\theta_{i}E_{2i-2, 2i-1} \in \frak{so}(n). \\
\end{split}
\]
It follows that the subspaces 
\[
\begin{split}
& \frak{t}_{n} = \{ t_{n}(\theta_{1}, \cdots, \theta_{m}) \ ;\ \theta_{i} \in \mathbb{R}\ (1 \leq i \leq m)\}, \\
& \frak{t}_{n}' = \{ t_{n}'(\theta_{1}, \cdots, \theta_{m}) \ ;\ \theta_{i} \in \mathbb{R}\ (1 \leq i \leq m)\}
\end{split}
\] 
are maximal abelian subalgebras of $\frak{spin}(n)$ and $\frak{so}(n)$.
Set
\[
\begin{split}
T_{n}(\theta_{1}, \cdots, \theta_{m}) 
& = \mathrm{exp} t_{n}(\theta_{1}, \cdots, \theta_{m}) = \prod_{i=1}^{m} \left (\cos \frac{\theta_{i}}{2} + \sin \frac{\theta_{i}}{2} e_{2i-2}e_{2i-1} \right) \in Spin(n), \\
T'_{n}(\theta_{1}, \cdots, \theta_{m}) 
&= \mathrm{exp} t_{n}'(\theta_{1}, \cdots, \theta_{m}) \\
&= d( R(\theta_{1}), \cdots, R(\theta_{m})) \in SO(n)
 \quad \left( R(\theta) = \begin{pmatrix} \cos \theta & -\sin \theta \\ \sin \theta & \cos \theta \end{pmatrix} \right).
\end{split}
\]
The subgroups $T_{n} = \{ T_{n}(\theta_{1}, \cdots, \theta_{m}) \ ;\ \theta_{i} \in \mathbb{R} \}$ and $T'_{n} = \{ T'_{n}(\theta_{1}, \cdots, \theta_{m}) \ ;\ \theta_{i} \in \mathbb{R} \}$ are maximal tori of $Spin(n)$ and $SO(n)$.
Since $\pi(t_{n}(\theta_{1}, \cdots, \theta_{m})) = t_{n}'(\theta_{1}, \cdots, \theta_{m})$, we have $\pi(T_{n}(\theta_{1}, \cdots, \theta_{m})) = T'_{n}(\theta_{1}, \cdots, \theta_{m})$.
Let $x_{i}$ be the 1-form of $\frak{t}_{n}^{\mathbb{C}}$ such that 
\[
x_{i}( t_{n}(\theta_{1}, \cdots, \theta_{m}) ) = i\theta_{i}.
\]
The root system of $\frak{spin}(n)^{\mathbb{C}}$ with respect to $\frak{t}^{\mathbb{C}}$ is 
\[
\Sigma(\frak{spin}(n), \frak{t}_{n}) = \{ \pm x_{i} \pm x_{j} \ ;\ 1 \leq i < j \leq m \}.
\]
The corresponding root spaces are
\[
\begin{split}
\frak{r}_{\pm(x_{i}-x_{j})}^{\mathbb{C}} &= \mathbb{C}( (e_{2i-1}e_{2j-1} + e_{2i}e_{2j}) \mp i(e_{2i}e_{2j-1} - e_{2i-1}e_{2j}) ), \\
\frak{r}_{\pm(x_{i}+x_{j})}^{\mathbb{C}} &= \mathbb{C}( (e_{2i-1}e_{2j-1} - e_{2i}e_{2j}) \mp i(e_{2i}e_{2j-1} + e_{2i-1}e_{2j}) ).
\end{split}
\]
The Killing form is given by $(X, Y) = (n-2)\mathrm{tr}(\pi(X)\pi(Y))$ for any $X,Y \in \frak{spin}(n)$.
Hence, 
\[
iA_{x_{i} \pm x_{j}} = -(1/2)(e_{2i-2}e_{2i-1} \pm e_{2j-2}e_{2j-1}).
\]
We take a linear order of $i\frak{t}$ such that $\Sigma^{+} = \{ x_{i} \pm x_{j} \ ;\ 1 \leq i < j \leq m \}$.


Let $p,q \in \mathbb{N}$ with $p+q=n$.
The Clifford algebras $Cl(\mathrm{span}_{\mathbb{R}}\{ e_{0}, \cdots, e_{p-1} \})$ and $Cl(\mathrm{span}_{\mathbb{R}}\{ e_{p}, \cdots, e_{p+q-1} \})$ give rise to the groups $Spin(p)$ and $Spin(q)$.
Multiplication in $Cl(\mathbb{R}^{n})$ induces a homomorphism
\[
i_{p,q}\ :\ Spin(p) \times Spin(q) \rightarrow Spin(n) \ ;\ (g,h) \mapsto gh.
\]
Its kernel is $\mathrm{Ker}\ i_{p,q} = \{ (1,1), (-1,-1) \}$. 
The image is isomorphic to $(Spin(p) \times Spin(q))/\mathbb{Z}_{2}$ and denoted by $Spin(p) \cdot Spin(q)$.

Let $\alpha \in \Sigma^{+}$.
The connected subgroup with Lie algebra $\frak{su}^{\alpha}(2)$ is denoted by $SU^{\alpha}(2)$.
It is isomorphic to $SU(2)$.
For $\alpha = x_{i} \pm x_{j} \ (1 \leq i < j \leq m)$, denote $\bar{\alpha} = x_{i} \mp x_{j}$.
We have the orthogonal direct sum decomposition
\[
C(\frak{su}^{\alpha}(2), \frak{spin}(n)) = \frak{t}^{\alpha} \oplus \bigoplus_{\beta \in \Sigma^{+}_{\alpha,0}} \frak{r}_{\beta} \cong \frak{su}(2) \oplus \frak{spin}(n-4).
\]
The irreducible component isomorphic to $\frak{su}(2)$ is $\frak{su}^{\bar{\alpha}}(2)$.
The other component is denoted by $\frak{spin}^{\alpha}(n-4)$.

Let $\beta \in \Sigma^{+}$ satisfy $2(\alpha, \beta)/(\alpha, \alpha) = \pm 1$.
In this case, $\alpha \mp \beta \in \Sigma^{+}$.
The connected subgroup with Lie algebra $\frak{su}^{\alpha, \beta}(3)$ is isomorphic to $SU(3)$ and denoted by $SU^{\alpha, \beta}(3)$.
We have the orthogonal direct sum decomposition
\[
C(\frak{su}^{\alpha,\beta}(3), \frak{spin}(n)) = \frak{t}^{\alpha, \beta} \oplus \bigoplus_{\gamma \in (\Sigma_{\alpha, 0} \cap \Sigma_{\beta, 0})^{+}}\frak{r}_{\gamma} \cong \mathbb{R} \oplus \frak{spin}(n-6).
\]
The irreducible component isomorphic to $\frak{spin}(n-6)$ is denoted by $\frak{spin}^{\alpha, \beta}(n-6)$.


Let $(\Delta_{2m}^{\pm})^{\mathbb{C}}$ be the spin representations of $Spin(2m)$, and the representation spaces are denoted by $(V_{2m}^{\pm})^{\mathbb{C}}$.
The spin representations are irreducible.
The spin representations $(\Delta_{m}^{\pm})^{\mathbb{C}}$ admit real representations $\Delta_{m}^{\pm}$ if and only if $m \equiv 0 \mod 8$, and the real representation spaces are denoted by $V_{m}^{\pm}$.
The set of weights of $(\Delta_{2m}^{\pm})^{\mathbb{C}}$ with respect to $\frak{t}_{2m}^{\mathbb{C}}$ is 
\[
\Pi_{2m}^{\pm} = \left\{ \frac{1}{2} \Big( \epsilon_{1}x_{1} + \cdots + \epsilon_{m}x_{m} \Big) \ ;\ \epsilon_{i} = \pm1 \ (i = 1, \cdots, m), \epsilon_{1} \cdots \epsilon_{m} = \pm 1 \right\}.
\]
For $s, r \in \mathbb{N}$ with $s + r = m$, we have
\[
\begin{array}{llll}
(\Delta_{2r+ 2s}^{+})^{\mathbb{C}} = \Big( (\Delta_{2r}^{+})^{\mathbb{C}} \otimes (\Delta_{2s}^{+})^{\mathbb{C}} \Big) \oplus \Big( (\Delta^{-}_{2r})^{\mathbb{C}} \otimes (\Delta^{-}_{2s})^{\mathbb{C}} \Big)
\end{array}
\]
as representations of $Spin(2r) \times Spin(2s)$.


\subsection{Octonions, $G_{2}$, and $Spin(8)$}\label{O}

We briefly recall the octonions, $G_{2}$, and $Spin(8)$ from \cite{Yokota}.
Let $\mathbb{O}= \bigoplus_{i=0}^{7}\mathbb{R}e_{i}$ be the octonions, where $e_{0}, \cdots, e_{7}$ form a basis.
The multiplication on $\mathbb O$, denoted by $\cdot$, is defined as follows.
The element $e_{0}$ serves as the unit element and will be written simply as $1$.
For each $1 \leq i \not= j \leq 7$, assume $e_{i} \cdot e_{i} = -1$ and $e_{i} \cdot e_{j} = -e_{j} \cdot e_{i}$.
The multiplication is distributive.
In Figure $1$, the multiplication among $e_{1},e_{2},e_{3}$ is defined as $e_{1} \cdot e_{2} = e_{3}, \ e_{2} \cdot e_{3}=e_{1}, \ e_{3} \cdot e_{1}=e_{2}$.
The multiplication on each of the remaining lines and on the circle is defined in the same way.
We equip $\mathbb{O}$ with the standard inner product such that $( \sum_{i=0}^{7}x_{i}e_{i}, \sum_{i=0}^{7}y_{i}e_{i}) = \sum_{i=0}^{7}x_{i}y_{i} \ (x_{i}, y_{i} \in \mathbb{R})$.
Using this inner product, we identify $SO(\mathbb{O})$ and $\frak{so}(\mathbb{O})$ with $SO(8)$ and $\frak{so}(8)$, respectively.
A linear automorphism $f:\mathbb{O} \rightarrow \mathbb{O}$ is called an automorphism of $\mathbb{O}$ if $f(x \cdot y) = f(x) \cdot f(y) \ (x,y \in \mathbb{O})$.
The automorphism group of $\mathbb{O}$ is the compact exceptional Lie group $G_{2}$.
The group $G_{2}$ is connected and simply connected.
Furthermore, $G_{2} \subset SO(8)$.
The Lie algebra of $G_{2}$ is given by $\frak{g}_{2} = \{ X \in \frak{so}(8) \ ;\ X(u) \cdot v + u \cdot X(v) = X(u \cdot v) \ (u,v \in \mathbb{O}) \}$.

\begin{figure}[h]
\centering
\includegraphics[width=40mm]{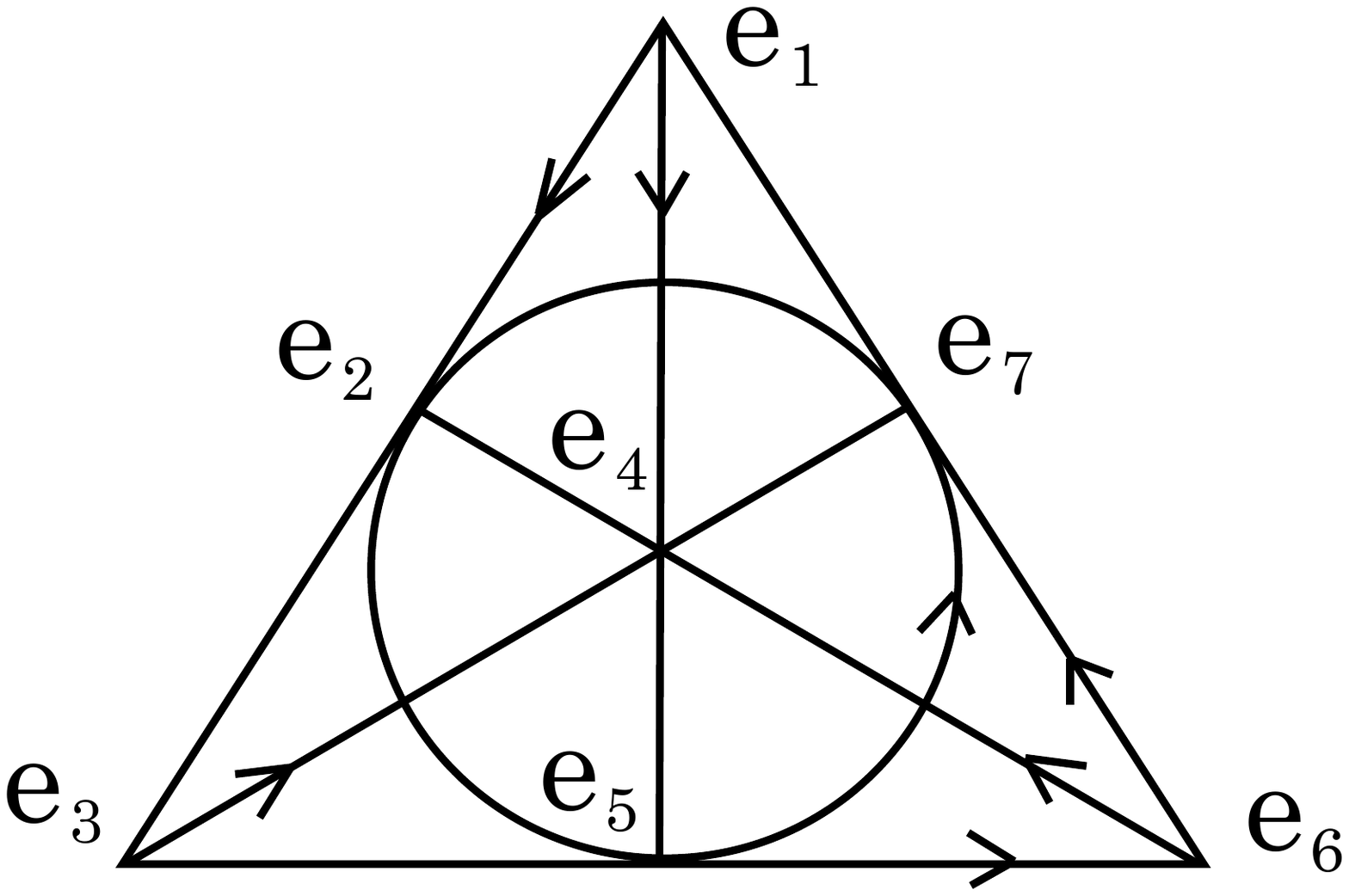}
\caption{Octonions}\label{octonion}
\end{figure}


For any $g_{1} \in SO(8)$, there exist $g_{2}, g_{3} \in SO(8)$ such that 
\[
(g_{1}u) \cdot (g_{2}v) = g_{3}(u \cdot v)\] 
for any $u,v \in \mathbb{O}$.
Such a pair $(g_{2}, g_{3})$ is unique up to replacing it by $(-g_{2}, -g_{3})$.
Set a Lie subgroup of $SO(8)^{3}$ by 
\[
D_{4} = \{ (x_{1}, x_{2}, x_{3}) \in SO(8)^{3} \ ;\ (x_{1}u) \cdot (x_{2}v) = x_{3}(u \cdot v) \ (u,v \in \mathbb{O}) \}.
\]
Similarly, for any $X_{1} \in \frak{so}(8)$, there exist unique $X_{2}, X_{3} \in \frak{so}(8)$ such that 
\[
X_{1}(u) \cdot v + u \cdot (X_{2}v) = X_{3}(u \cdot v)\]
for any $u, v \in \mathbb{O}$.
The Lie algebra of $D_{4}$ is
\[
\frak{d}_{4} = \{ (X_{1}, X_{2}, X_{3}) \in \frak{so}(8) \oplus \frak{so}(8) \oplus \frak{so}(8) \ ;\ (X_{1}u) \cdot v + u \cdot (X_{2}v) = X_{3}(u \cdot v) \ (u,v \in \mathbb{O}) \}.
\]
The projection $\pi_{\mathbb{O}}:D_{4} \rightarrow SO(8) \ ;\ (x_{1}, x_{2}, x_{3}) \mapsto x_{1}$ is a double covering homomorphism.
Consequently, $D_{4}$ is isomorphic to $Spin(8)$, and $\frak{d}_{4} \ni (X_{1}, X_{2}, X_{3}) \mapsto X_{1} \in \frak{so}(8)$ is an isomorphism.
The subgroup $\{ (x,x,x) \in D_{4} \ ;\ x \in SO(8) \}$ of $D_{4}$ is naturally identified with $G_{2}$.
For each $g \in G_{2}$, define the elements of $D_{4}$ by
\[
g^{0,0} = (g,g,g), \qquad 
g^{1,0} = (g,-g,-g), \qquad 
g^{0,1} = (-g,-g,g), \qquad 
g^{1,1} = (-g,g,-g).
\]
We have $C(D_{4}) = \{ \mathrm{id}_{\mathbb{O}}^{a,b} \ ;\ a,b = 0,1 \} \cong \mathbb{Z}_{2} \times \mathbb{Z}_{2}$.

Let $a$ be an imaginary octonion.
Set $L_{a}, R_{a}, T_{a} \in \frak{so}(8)$ by $L_{a}(x) = a \cdot x, \ R_{a}(x) = x \cdot a$, and $T_{a} = L_{a} + R_{a}$ for any $x \in \mathbb{O}$.
Since $(T_{a}u)v - u(L_{a}v) = L_{a}(uv)$ for any $u,v \in \mathbb{O}$, we have $(T_{a}, -L_{a}, L_{a}) \in \frak{d}_{4}$.
Since $T_{e_{i}} = 2E_{0i}$ and $[T_{e_{i}}, T_{e_{j}}] = 4E_{ij}$,
\[
\widetilde{E}_{0i} = \Big( E_{0i}, -\frac{1}{2}L_{e_{i}}, \frac{1}{2}L_{e_{i}} \Big), \quad
\widetilde{E}_{ij} = \Big( E_{ij}, \frac{1}{4}[L_{e_{i}}, L_{e_{j}}], \frac{1}{4}[L_{e_{i}}, L_{e_{j}}] \Big) \quad (1 \leq i < j \leq 7)
\]
form a basis of $\frak{d}_{4}$.
For any $\theta_{1}, \cdots, \theta_{4} \in \mathbb{R}$, set
\[
\widetilde{t}_{8}(\theta_{1}, \cdots, \theta_{4}) = \sum_{i=1}^{4} \theta_{i} \widetilde{E}_{2i-2, 2i-1}.
\]
The subspace $\widetilde{\frak{t}}_{8} = \{ \widetilde{t}_{8}(\theta_{1}, \cdots, \theta_{4}) \ ;\ \theta_{i} \in \mathbb{R} \ (1 \leq i \leq 4) \}$ is a maximal abelian subalgebra of $\frak{d}_{4}$.
Note that
\[
\begin{array}{lll}
\ L_{e_{1}} = E_{01} + E_{23} + E_{45} + E_{67}, & [L_{e_{2}}, L_{e_{3}}] = 2(E_{01} + E_{23} - E_{45} - E_{67}), \\ 
\ [L_{e_{4}}, L_{e_{5}}] = 2(E_{01} - E_{23} + E_{45} - E_{67}), & [L_{e_{6}}, L_{e_{7}}] = 2(E_{01} - E_{23} - E_{45} + E_{67}). \\
\end{array}
\]
Set
\[
\small
\begin{split}
& \widetilde{T}_{8}(\theta_{1}, \cdots, \theta_{4}) = \mathrm{exp}( \widetilde{t}_{8}(\theta_{1}, \cdots, \theta_{4}) ) \\
&= \Big( T_{8}'(\theta_{1}, \theta_{2}, \theta_{3}, \theta_{4}),\\
& \quad\quad T'_{8} \left( \frac{-\theta_{1} + \theta_{2} + \theta_{3} + \theta_{4}}{2}, \frac{-\theta_{1} + \theta_{2} - \theta_{3} - \theta_{4}}{2}, \frac{-\theta_{1} - \theta_{2} + \theta_{3} - \theta_{4}}{2}, \frac{-\theta_{1} - \theta_{2} - \theta_{3} + \theta_{4}}{2} \right), \\
& \quad\quad T'_{8} \left( \frac{+\theta_{1} + \theta_{2} + \theta_{3} + \theta_{4}}{2}, \frac{+\theta_{1} + \theta_{2} - \theta_{3} - \theta_{4}}{2}, \frac{+\theta_{1} - \theta_{2} + \theta_{3} - \theta_{4}}{2}, \frac{+\theta_{1} - \theta_{2} - \theta_{3} + \theta_{4}}{2} \right)
\Big). \\
\end{split}
\]
The subgroup $\widetilde{T}_{8} = \{ \widetilde{T}_{8}(\theta_{1}, \cdots, \theta_{4}) \ ;\ \theta_{i} \in \mathbb{R} \ (1 \leq i \leq 4) \}$ is a maximal torus of $D_{4}$.
We have $\pi_{\mathbb{O}}(\widetilde{T}_{8}(\theta_{1}, \cdots, \theta_{4})) = T'_{8}(\theta_{1}, \cdots, \theta_{4})$.

Let $\lambda : Spin(8) \rightarrow D_{4}$ be the isomorphism such that the derivation $\lambda : \frak{spin}(8) \rightarrow \frak{d}_{4}$ satisfies $\lambda = \pi_{\mathbb{O}}^{-1} \circ \pi$, where $Spin(8)$ is defined by $Cl(\mathrm{span}_{\mathbb{R}}\{ e_{0}, \cdots, e_{7} \})$.
We identify $Spin(8)$ and $\frak{spin}(8)$ with $D_{4}$ and $\frak{d}_{4}$ under $\lambda$. 
The center $C(Spin(8))$ corresponds to the center $C(D_{4})$ as 
\[
1 = \mathrm{id}_{\mathbb{O}}^{0,0},  \quad
-1 = \mathrm{id}_{\mathbb{O}}^{1,0}, \quad 
e_{0}e_{1} \cdots e_{7} = \mathrm{id}_{\mathbb{O}}^{0,1}, \quad 
-e_{0}e_{1} \cdots e_{7} = \mathrm{id}_{\mathbb{O}}^{1,1}.
\]
Under this identification, $\widetilde{T}_{8}(\theta_{1}, \cdots, \theta_{4})$ and $\widetilde{t}_{8}(\theta_{1}, \cdots, \theta_{4})$ correspond to $T_{8}(\theta_{1}, \cdots, \theta_{4})$ and $t_{8}(\theta_{1}, \cdots, \theta_{4})$.


Let $\tilde{\Delta}_{8}^{\pm}$ denote the real representations of $Spin(8)$ on $\mathbb{O}$ defined by
\[
\tilde{\Delta}_{8}^{+}(g_{1}, g_{2}, g_{3}) = g_{3}(u), \quad
\tilde{\Delta}_{8}^{-}(g_{1}, g_{2}, g_{3}) = g_{2}(u) \quad ((g_{1}, g_{2}, g_{3}) \in Spin(8), u \in \mathbb{O}).
\]
We write their complexifications as $(\tilde{\Delta}_{8}^{\pm})^{\mathbb{C}}$.
Let $\tilde{\Pi}^{\pm}_{8}$ denote the set of weights of $(\tilde{\Delta}^{\pm}_{8})^{\mathbb{C}}$ with respect to $\frak{t}_{8}^{\mathbb{C}}$.
The sets of weights are
\[
\begin{split}
\tilde{\Pi}_{8}^{\pm} &= \left\{ \frac{1}{2} \sum_{i=1}^{4} \epsilon_{i}x_{i} \ ;\ \epsilon_{i} = 1 \text{\ or\ }-1, \ \epsilon_{1} \epsilon_{2} \epsilon_{3}  \epsilon_{4} = \pm 1 \right\}. \\
\end{split}
\]
For $\gamma \in \tilde{\Pi}_{8}^{\pm}$, write $(V_{8}^{\pm})^{\mathbb{C}}_{\gamma}$ for the corresponding weight space.
Then,
\[
\begin{array}{lllllll}
(V_{8}^{+})_{\pm\frac{1}{2}(x_{1} + x_{2} + x_{3} + x_{4})}^{\mathbb{C}} = \mathbb{C}(e_{0} \mp ie_{1}), &
(V_{8}^{-})_{\pm\frac{1}{2}(-x_{1} + x_{2} + x_{3} + x_{4})}^{\mathbb{C}} = \mathbb{C}(e_{0} \mp ie_{1}), \\
(V_{8}^{+})_{\pm\frac{1}{2}(x_{1} + x_{2} - x_{3} - x_{4})}^{\mathbb{C}} = \mathbb{C}(e_{2} \mp ie_{3}), &
(V_{8}^{-})_{\pm\frac{1}{2}(-x_{1} + x_{2} - x_{3} - x_{4})}^{\mathbb{C}} = \mathbb{C}(e_{2} \mp ie_{3}), \\
(V_{8}^{+})_{\pm\frac{1}{2}(x_{1} - x_{2} + x_{3} - x_{4})}^{\mathbb{C}} = \mathbb{C}(e_{4} \mp ie_{5}), & 
(V_{8}^{-})_{\pm\frac{1}{2}(-x_{1} - x_{2} + x_{3} - x_{4})}^{\mathbb{C}} = \mathbb{C}(e_{4} \mp ie_{5}), \\
(V_{8}^{+})_{\pm\frac{1}{2}(x_{1} - x_{2} - x_{3} + x_{4})}^{\mathbb{C}} = \mathbb{C}(e_{6} \mp ie_{7}), &
(V_{8}^{-})_{\pm\frac{1}{2}(-x_{1} - x_{2} - x_{3} + x_{4})}^{\mathbb{C}} = \mathbb{C}(e_{6} \mp ie_{7}). \\
\end{array}
\]
It follows that $\tilde{\Delta}_{8}^{\pm}$ and $(\tilde{\Delta}_{8}^{\pm})^{\mathbb{C}}$ are isomorphic to the spin representations $\Delta_{8}^{\pm}$ and $(\Delta_{8}^{\pm})^{\mathbb{C}}$.
Thus, $V_{8}^{\pm}$ and $(V_{8}^{\pm})^{\mathbb{C}}$ are identified with $\mathbb{O}$ and $\mathbb{O}^{\mathbb{C}}$.
When we regard $\mathbb{O}$ and $\mathbb{O}^{\mathbb{C}}$ as the representation spaces of $\Delta_{8}^{\pm}$ and $(\Delta_{8}^{\pm})^{\mathbb{C}}$, we write $\mathbb{O}$ and $\mathbb{O}^{\mathbb{C}}$ by $\mathbb{O}^{\pm}$ and $(\mathbb{O}^{\pm})^{\mathbb{C}}$.


We introduce some elements of $G_{2}$ and $Spin(8)$ that we will use later.
Define the following elements of $G_{2}$:
\[
\begin{array}{llll}
\gamma_{0} = \mathrm{id}_{\mathbb{O}}, &
\gamma_{1} = \mathrm{id}_{\mathrm{span}_{\mathbb{R}}\{ e_{0}, e_{1}, e_{2}, e_{3} \}} - \mathrm{id}_{\mathrm{span}_{\mathbb{R}}\{ e_{4}, e_{5}, e_{6}, e_{7} \}}, \\
\gamma_{2} = \mathrm{id}_{\mathrm{span}_{\mathbb{R}}\{ e_{0}, e_{1}, e_{4}, e_{5} \}} - \mathrm{id}_{\mathrm{span}_{\mathbb{R}}\{ e_{2}, e_{3}, e_{6}, e_{7} \}}, &
\gamma_{3} = \mathrm{id}_{\mathrm{span}_{\mathbb{R}}\{ e_{0}, e_{1}, e_{6}, e_{7} \}} - \mathrm{id}_{\mathrm{span}_{\mathbb{R}}\{ e_{2}, e_{3}, e_{4}, e_{5} \}}, \\
\gamma_{4} = \mathrm{id}_{\mathrm{span}_{\mathbb{R}}\{ e_{0}, e_{2}, e_{4}, e_{6} \}} - \mathrm{id}_{\mathrm{span}_{\mathbb{R}}\{ e_{1}, e_{3}, e_{5}, e_{7} \}}, &
\gamma_{5} = \mathrm{id}_{\mathrm{span}_{\mathbb{R}}\{ e_{0}, e_{2}, e_{5}, e_{7} \}} - \mathrm{id}_{\mathrm{span}_{\mathbb{R}}\{ e_{1}, e_{3}, e_{4}, e_{6} \}}, \\
\gamma_{6} = \mathrm{id}_{\mathrm{span}_{\mathbb{R}}\{ e_{0}, e_{3}, e_{4}, e_{7} \}} - \mathrm{id}_{\mathrm{span}_{\mathbb{R}}\{ e_{1}, e_{2}, e_{5}, e_{6} \}}, &
\gamma_{7} = \mathrm{id}_{\mathrm{span}_{\mathbb{R}}\{ e_{0}, e_{3}, e_{5}, e_{6} \}} - \mathrm{id}_{\mathrm{span}_{\mathbb{R}}\{ e_{1}, e_{2}, e_{4}, e_{7} \}}. \\
\end{array}
\]
The elements $\gamma_{1}, \cdots, \gamma_{7}$ correspond to the lines and the circle in Figure \ref{octonion}: the $+1$-eigenspace og $\gamma_{i} \ (1 \leq i \leq 7)$ is spanned by $e_{0}$ and three basis elements $e_{j}$ lying on the corresponding line or circle.
Thus, the definition of these elements directly reflects the multiplication structure of the octonions.
Under the above identification $D_{4} \cong Spin(8)$,
\[
\begin{array}{llllllllllllll}\vspace{2mm}
\gamma_{0}^{0,0} = 1, & \gamma_{0}^{1,0} = -1, & \gamma_{0}^{0,1} = e_{0} \cdots e_{7}, & \gamma_{0}^{1,1} = -e_{0} \cdots e_{7}, \\\vspace{2mm}
\gamma_{1}^{0,0} = -e_{4}e_{5}e_{6}e_{7}, & \gamma_{1}^{1,0} = e_{4}e_{5}e_{6}e_{7}, & \gamma_{1}^{0,1} = -e_{0}e_{1}e_{2}e_{3}, & \gamma_{1}^{1,1} = e_{0}e_{1}e_{2}e_{3}, \\\vspace{2mm}
\gamma_{2}^{0,0} = -e_{2}e_{3}e_{6}e_{7}, & \gamma_{2}^{1,0} = e_{2}e_{3}e_{6}e_{7}, & \gamma_{2}^{0,1} = -e_{0}e_{1}e_{4}e_{5}, & \gamma_{2}^{1,1} = e_{0}e_{1}e_{4}e_{5}, \\\vspace{2mm}
\gamma_{3}^{0,0} = -e_{2}e_{3}e_{4}e_{5}, & \gamma_{3}^{1,0} = e_{2}e_{3}e_{4}e_{5}, & \gamma_{3}^{0,1} = -e_{0}e_{1}e_{6}e_{7}, & \gamma_{3}^{1,1} = e_{0}e_{1}e_{6}e_{7}, \\\vspace{2mm}
\gamma_{4}^{0,0} = e_{1}e_{3}e_{5}e_{7}, & \gamma_{4}^{1,0} = -e_{1}e_{3}e_{5}e_{7}, & \gamma_{4}^{0,1} = e_{0}e_{2}e_{4}e_{6}, & \gamma_{4}^{1,1} = -e_{0}e_{2}e_{4}e_{6}, \\\vspace{2mm}
\gamma_{5}^{0,0} = -e_{1}e_{3}e_{4}e_{6}, & \gamma_{5}^{1,0} = e_{1}e_{3}e_{4}e_{6}, & \gamma_{5}^{0,1} = -e_{0}e_{2}e_{5}e_{7}, & \gamma_{5}^{1,1} = e_{0}e_{2}e_{5}e_{7}, \\\vspace{2mm}
\gamma_{6}^{0,0} = -e_{1}e_{2}e_{5}e_{6}, & \gamma_{6}^{1,0} = e_{1}e_{2}e_{5}e_{6}, & \gamma_{6}^{0,1} = -e_{0}e_{3}e_{4}e_{7}, & \gamma_{6}^{1,1} = e_{0}e_{3}e_{4}e_{7}, \\\vspace{2mm}
\gamma_{7}^{0,0} = -e_{1}e_{2}e_{4}e_{7}, & \gamma_{7}^{1,0} = e_{1}e_{2}e_{4}e_{7}, & \gamma_{7}^{0,1} = -e_{0}e_{3}e_{5}e_{6}, & \gamma_{7}^{1,1} = e_{0}e_{3}e_{5}e_{6}. \\
\end{array}
\]
One also has
\[
\begin{array}{lllllllllll} \vspace{2mm}
\tau_{x_{1} - x_{2}} = \gamma_{1}^{0,1}, & \tau_{x_{1} + x_{2}} = \gamma_{1}^{1,1}, & \tau_{x_{3} - x_{4}} = \gamma_{1}^{0,0}, & \tau_{x_{3} + x_{4}} = \gamma_{1}^{1,0}, \\ \vspace{2mm}
\tau_{x_{1} - x_{3}} = \gamma_{2}^{0,1}, & \tau_{x_{1} + x_{3}} = \gamma_{2}^{1,1}, & \tau_{x_{2} - x_{4}} = \gamma_{2}^{0,0}, & \tau_{x_{2} + x_{4}} = \gamma_{2}^{1,0}, \\ \vspace{2mm}
\tau_{x_{1} - x_{4}} = \gamma_{3}^{0,1}, & \tau_{x_{1} + x_{4}} = \gamma_{3}^{1,1}, & \tau_{x_{2} - x_{3}} = \gamma_{3}^{0,0}, & \tau_{x_{2} + x_{3}} = \gamma_{3}^{1,0}. \\
\end{array}
\]






\section{Exceptional Lie groups}\label{ELg}

In this section, we recall the construction of the simply connected compact exceptional Lie groups $E_{8}, \ E_{7}, \ E_{6}$, and $F_{4}$ following \cite{Adams-L}.
We first summarize some basic properties of these groups.
Each of these groups is simply connected and simple, and theri dimensions are $248, 133, 78$, and $52$, respectively.
Let $G$ be one of these groups, and let $f$ be an involutive automorphism of $G$.
The fixed point subgroup $F(f,G)$ is connected and isomorphic to one of the groups listed in Table \ref{involutions}.
The isomorphism class of $F(f,G)$ can therefore be determined from $\dim F^{+}(f, \frak{g})$.

\begin{table}[h]
\centering
\caption{Fixed point subgroups of involutive automorphisms} \label{involutions}
\begin{tabular}{|c||c|c|c|c|c|c|cccccccccccc} \hline
Type & $G$ & $F(f, G)$ & $\dim F(f,G)$ & $\dim G/F(f,G)$ \\ \hline\hline
$FI$ & $F_{4}$ & $(Sp(1) \times Sp(3))/\mathbb{Z}_{2}$ & 24 & 28 \\
$FII$ & $F_{4}$ & $Spin(9)$ & 36 & 16 \\ \hline
$EI$ & $E_{6}$ & $Sp(4)/\mathbb{Z}_{2}$ & 36 & 42 \\
$EII$ & $E_{6}$ & $(Sp(1) \times SU(6))/\mathbb{Z}_{2}$ & 38 & 40 \\
$EIII$ & $E_{6}$ & $(U(1) \times Spin(10))/\mathbb{Z}_{4}$ & 46 & 32 \\
$EIV$ & $E_{6}$ & $F_{4}$ & 52 & 26 \\ \hline
$EV$ & $E_{7}$ & $SU(8)/\mathbb{Z}_{2}$ & 63 & 70 \\
$EVI$ & $E_{7}$ & $(Sp(1) \times Spin(12))/\mathbb{Z}_{2}$ & 69 & 64 \\
$EVII$ & $E_{7}$ & $(U(1) \times E_{6})/\mathbb{Z}_{3}$ & 79 & 54 \\ \hline
$EVIII$ & $E_{8}$ & $Spin(16)/\mathbb{Z}_{2}$ & 120 & 128 \\
$EIX$ & $E_{8}$ & $(Sp(1) \times E_{7})/\mathbb{Z}_{2}$ & 136 & 112 \\ \hline
\end{tabular}
\end{table}


\subsection{The Lie group $E_{8}$} \label{E8}

We begin with some basic properties of $Spin(16)$.
We realize $Spin(16)$ inside the Clifford algebra $Cl(\mathrm{span}_{\mathbb{R}}\{ e_{0}, \cdots, e_{15} \})$.
Let $Spin^{0}(8)$ and $Spin^{1}(8)$ be the subgroups defined inside 
$Cl(\mathrm{span}_{\mathbb{R}}\{ e_{0}, \cdots, e_{7} \})$ and $Cl(\mathrm{span}_{\mathbb{R}}\{ e_{8}, \cdots, e_{15} \})$, respectively.
Write $\frak{spin}^{i}(8)$ for the Lie algebra of $Spin^{i}(8)$ $(i=0,1)$.
We write $Spin^{0}(8) \cdot Spin^{1}(8)$ for the image of the product map $i : Spin^{0}(8) \times Spin^{1}(8) \to Spin(16)$.
For $i = 0,1$, let $T_{8}^{i}$ be the maximal torus in $Spin^{i}(8)$ given by
\[
\begin{split}
& T_{8}^{i} = \Big\{ T^{i}(\theta_{1}, \theta_{2}, \theta_{3}, \theta_{4}) = \prod_{k=1}^{4} \Big( \cos \frac{\theta_{k}}{2} + \sin \frac{\theta_{k}}{2} e_{8i+ 2k-2}e_{8i+2k-1} \Big) \ ;\ \theta_{k} \in \mathbb{R} \ (1 \leq k \leq 4) \Big\}. \\
\end{split}
\]
Its Lie algebra is
\[
\begin{split}
& \frak{t}_{8}^{i} = \left\{ t^{i}(\theta_{1}, \cdots, \theta_{4}) = \sum_{k=1}^{4}\frac{\theta_{k}}{2} e_{8i+2k-2}e_{8i+2k-1} \ ;\ \theta_{k} \in \mathbb{R} \ (1 \leq k \leq 4) \right\}.
\end{split}
\] 
Define
\[
\begin{split}
& T_{16} = \{ T_{16}(\theta_{1}, \cdots,\theta_{8}) = T^{0}_{8}(\theta_{1}, \cdots, \theta_{4}) T^{1}_{8}(\theta_{5}, \cdots, \theta_{8}) \ ;\ \theta_{k} \in \mathbb{R} \ (1 \leq k \leq 8) \}, \\
& \frak{t}_{16} = \frak{t}_{8}^{0} \oplus \frak{t}_{8}^{1} = \{ t_{16}(\theta_{1}, \cdots, \theta_{8}) = t_{8}^{0}(\theta_{1}, \cdots, \theta_{4}) + t_{8}^{1}(\theta_{5}, \cdots, \theta_{8}) \ ;\ \theta_{k} \in \mathbb{R} \ (1 \leq k \leq 8) \}.
\end{split}
\]
The subgroup $T_{16}$ is a maximal torus of $Spin(16)$ with Lie algebra $\frak{t}_{16}$.

For simplicity, write $\Delta = \Delta_{16}^{+}$ and $\Delta^{\mathbb{C}} = (\Delta_{16}^{+})^{\mathbb{C}}$.
Let $V$ and $V^{\mathbb{C}}$ be the representation spaces of $\Delta$ and $\Delta^{\mathbb{C}}$, respectively.
We write 
\[
\mathbb{O} \otimes^{+} \mathbb{O} := \mathbb{O}^{+} \otimes \mathbb{O}^{+}, \qquad
\mathbb{O} \otimes^{-} \mathbb{O} := \mathbb{O}^{-} \otimes \mathbb{O}^{-}.
\]
For $u,v \in \mathbb{O}$, we write the corresponding tensors by $u \otimes^{+} v$ or $u \otimes^{-} v$, respectively.
As a representation of $Spin^{0}(8) \times Spin^{1}(8)$, one has
\[
\Delta_{16}^{+}
\cong
(\Delta_{8}^{+} \otimes \Delta_{8}^{+})\ \oplus\ (\Delta_{8}^{-} \otimes \Delta_{8}^{-}).
\]
Accordingly,
\[
V = (\mathbb{O} \otimes^{+} \mathbb{O}) \oplus (\mathbb{O} \otimes^{-} \mathbb{O}), \quad
V^{\mathbb{C}} = (\mathbb{O} \otimes^{+} \mathbb{O})^{\mathbb{C}} \oplus (\mathbb{O} \otimes^{-} \mathbb{O})^{\mathbb{C}}.
\]
For $g_{i} \in Spin^{i}(8) \ (i = 0,1)$ and $u,v,w,z \in \mathbb{O}^{\mathbb{C}}$,
\[
\begin{split}
&\Delta^{\mathbb{C}}(i(g_{0}, g_{1}))(u \otimes^{+} v + w \otimes^{-} z) \\
& \hspace{10mm} = (\Delta_{8}^{+})^{\mathbb{C}}(g_{0})(u) \otimes^{+} (\Delta_{8}^{+})^{\mathbb{C}}(g_{1})(v) + (\Delta_{8}^{-})^{\mathbb{C}}(g_{0})(w) \otimes^{-} (\Delta_{8}^{-})^{\mathbb{C}}(g_{1})(z). \\
\end{split}
\]
The set of weights of $\Delta^{\mathbb{C}}$ with respect to $\frak{t}_{16}^{\mathbb{C}}$ is 
\[
\Pi_{16} = \left\{ \frac{1}{2} \sum_{i=1}^{8}\epsilon_{i}x_{i} \ ;\ \epsilon_{i} = \pm1, \ \epsilon_{1} \cdots \epsilon_{8} = 1 \right\}.
\]
For $\gamma \in \Pi_{16}$, the corresponding weight space $V_{\gamma}^{\mathbb{C}}$ is spanned by one of the following vectors
\[
(e_{2i} \pm i e_{2i+1}) \otimes^{+} (e_{2j} \pm i e_{2j+1}), \quad  (e_{2i} \pm i e_{2i+1}) \otimes^{-} (e_{2j} \pm i e_{2j+1}) \qquad (0 \leq i,j\le 3).
\]
In particular, $\overline{V_{\gamma}^{\mathbb{C}}}=V_{-\gamma}^{\mathbb{C}}$, where $\overline{ \ \cdot \ }$ denotes complex conjugation.


We recall the construction of the Lie algebra $\frak{e}_{8}$ from \cite{Adams-L}.
Let $L = \frak{spin}(16)$.
Fix a $Spin(16)$-invariant inner product $( \ ,\ )_{V}$ on $V$.
We also equip $L$ with the $Spin(16)$-invariant inner product $(\ ,\ )_{L}$ such that $(e_{r}e_{s}, e_{t}e_{u})_{L} = \delta_{rt}\delta_{su} \ (r < s, t < u)$.
Consider the direct sum $L \oplus V$.
Define a bilinear map $[\ ,\ ] : (L \oplus V) \times (L \oplus V) \rightarrow L \oplus V$ as follows.
For any $A, B \in L$ and $u, v \in V$, define
\[
[A,B] = [A,B]_{L}, \quad
[A, u] = \Delta(A)(u), 
\]
and define $[u,v] \in L$ by the condition $(C, [u,v])_{L} = (\Delta(C)(u), v)_{V}$ for any $C \in L$, where $[\ ,\ ]_{L}$ is the bracket of $L$.
This bilinear map is skew-symmetric and satisfies the Jacobi identity.
Thus, $L\oplus V$ becomes a Lie algebra, which we denote by $\frak{e}_{8}$.
The group of all automorphisms of $\frak{e}_{8}$ is the compact Lie group and called the exceptional compact Lie group $E_{8}$.
The Lie algebra of $E_{8}$ is $\frak{e}_{8}$.
It is well known that $E_{8}$ is simply connected and simple.
Furthermore, the center is trivial.
Define a map $\phi : Spin(16) \rightarrow E_{8}$ by
\[
\phi(g)(A + u) = \mathrm{Ad}(g)(A) + \Delta(g)(u) \qquad ( g \in Spin(16), A \in L, u \in V ).
\]
The $Spin(16)$-invariance of $(\ ,\ )_{L}$ and $(\ ,\ )_{V}$ implies that $\phi(g)$ preserves the bracket for every $g \in Spin(16)$.
Therefore,  $\phi$ is a homomorphism.
Since $\mathrm{Ker}\ \phi = \{ 1, e_{0}e_{1} \cdots e_{15} \}$, we obtain $\phi(Spin(16)) \cong Spin(16)/\mathbb{Z}_{2}$.
The homomorphism $\phi : L \rightarrow \frak{e}_{8}$ is injective, so we identify $L$ with its image in $\frak{e}_{8}$.
Under this identification,
\[
\frak{e}_{8} = L \oplus (\mathbb{O} \otimes^{+} \mathbb{O}) \oplus (\mathbb{O} \otimes^{-} \mathbb{O}).
\]
Let $p, q \in \mathbb{N}$ be even and satisfy $p + q = 16$.
Consider the homomorphism
\[
\psi_{p,q} = \phi \circ i_{p,q} : Spin(p) \times Spin(q) \rightarrow E_{8}.
\]
The kernel is $\mathrm{Ker}\ \psi_{p,q} = \{ (1,1), (-1,-1), (e_{0} \cdots e_{p-1}, e_{p} \cdots e_{15}), (-e_{0} \cdots e_{p-1}, -e_{p} \cdots e_{15}) \}$.
For $p = q = 8$, write $\psi = \psi_{8,8}$.
Its kernel is $\mathrm{Ker}\ \psi = \{ (\gamma_{0}^{a,b}, \gamma_{0}^{a,b}) \ ;\ a,b = 0,1 \}$.


We describe the root system of $\frak{e}_{8}$.
Set
\[
\begin{split}
& T = \Big\{ T(\theta_{1}, \cdots, \theta_{8}) = \phi(T_{16}(\theta_{1}, \cdots, \theta_{8})) \ ;\ \theta_{l} \in \mathbb{R} \ (1 \leq l \leq 8) \Big\}, \\
& \frak{t} = \Big\{ t(c_{1}, \cdots, c_{8}) = t_{16}(c_{1}, \cdots, c_{8}) \ ;\ c_{l} \in \mathbb{R} \ (1 \leq l \leq 8) \Big\}.
\end{split}
\]
The subgroup $T$ is a maximal torus of $E_{8}$ with Lie algebra $\frak{t}$.
In particular, $\mathrm{rank}\, E_{8} = 8$.
The root system of $\frak{e}_{8}^{\mathbb{C}}$ with respect to $\frak{t}^{\mathbb{C}}$ is
\[
\Sigma(\frak{e}_{8}, \frak{t}) 
= \Big\{ \pm x_{i} \pm x_{j} \ ;\ 1 \leq i < j \leq 8 \Big\} 
\cup \Big\{ \frac{1}{2} \sum_{i=1}^{8}\epsilon_{i} x_{i} \ ;\ \epsilon_{i} = \pm 1, \ \epsilon_{1} \cdots \epsilon_{8} = 1 \Big\}.
\]
For $\gamma \in \Sigma$, the corresponding root space is
\[
\frak{r}_{\gamma}^{\mathbb{C}} = 
\begin{cases}
\frak{r}_{\gamma}^{\mathbb{C}}(L, \frak{t}_{16}) & (\gamma \in \{ \pm x_{i} \pm x_{j} \ ;\ 1 \leq i < j \leq 8 \}), \\
V_{\gamma}^{\mathbb{C}} & (\text{otherwise}). \\
\end{cases}
\]
Choose a linear order on $\frak{t}$ for which the set of positive roots is 
\[
\Sigma^{+} = \{ x_{i} \pm x_{j} \ ;\ 1 \leq i < j \leq 8 \} \cup \Big\{ \frac{1}{2}\sum_{i=1}^{8}\epsilon_{i} x_{i} \in \Sigma \ ;\ \epsilon_{1} = 1 \Big\}.
\]
For $\gamma = \pm x_{i} \pm x_{j} \ (1 \leq i < j \leq 8)$,
\[
\frak{r}_{\gamma}(\frak{e}_{8}, \frak{t}) = \frak{r}_{\gamma}(L, \frak{t}_{16}).
\]
For $\gamma = (1/2)\sum_{i=1}^{8}\epsilon_{i}x_{i} \in \Sigma$, there exist $j,k \in \{ 0, 1,2,3 \}$ such that $\frak{r}_{\gamma}$ is one of
\[
\begin{split}
& \mathbb{R}( e_{2k} \otimes^{+} e_{2j} + e_{2k+1} \otimes^{+} e_{2j+1} ) \oplus \mathbb{R}( e_{2k} \otimes^{+} e_{2j+1} - e_{2k+1} \otimes^{+} e_{2j} ), \\
& \mathbb{R}( e_{2k} \otimes^{+} e_{2j} - e_{2k+1} \otimes^{+} e_{2j+1} ) \oplus \mathbb{R}( e_{2k} \otimes^{+} e_{2j+1} + e_{2k+1} \otimes^{+} e_{2j} ), \\
& \mathbb{R}( e_{2k} \otimes^{-} e_{2j} + e_{2k+1} \otimes^{-} e_{2j+1} ) \oplus \mathbb{R}( e_{2k} \otimes^{-} e_{2j+1} - e_{2k+1} \otimes^{-} e_{2j} ), \\
& \mathbb{R}( e_{2k} \otimes^{-} e_{2j} - e_{2k+1} \otimes^{-} e_{2j+1} ) \oplus \mathbb{R}( e_{2k} \otimes^{-} e_{2j+1} + e_{2k+1} \otimes^{-} e_{2j} ). \\
\end{split}
\]
For each $\alpha \in \Sigma$, let $SU^{\alpha}(2)$ be the connected subgroup of $E_{8}$ with Lie algebra $\frak{su}^{\alpha}(2)$.
If $\beta \in \Sigma_{\alpha, \pm1}$, let $SU^{\alpha, \beta}(3)$ be the connected subgroup with Lie algebra $\frak{su}^{\alpha, \beta}(3)$.
These subgroups satisfy $SU^{\alpha}(2) \cong SU(2)$ and $SU^{\alpha, \beta}(3) \cong SU(3)$.


\subsection{The Lie groups $E_{7}, E_{6}$, and $F_{4}$} \label{E7}

We now recall the construction of $E_{7}$.
Fix $\alpha \in \Sigma^{+}$.
The identity component of the centralizer $C(SU^{\alpha}(2), E_{8})$ is called the Lie group $E_{7}$, and we denote it by $E_{7}^{\alpha}$.
It is well known that the group $E_{7}^{\alpha}$ is a simply connected compact simple Lie group.
The center $C(E_{7}^{\alpha})$ is isomorphic to $\mathbb{Z}_{2}$ and $C(E_{7}^{\alpha}) = \{ e, \tau_{\alpha} \}$.
The Lie algebra of $E_{7}^{\alpha}$ is the centralizer $C(\frak{su}^{\alpha}(2), \frak{e}_{8})$ and is denoted by $\frak{e}_{7}^{\alpha}$.
We have
\[
\frak{e}_{7}^{\alpha} = \frak{t}^{\alpha} \oplus \bigoplus_{\beta \in \Sigma_{\alpha,0}^{+}}\frak{r}_{\beta}(\frak{e}_{8}, \frak{t})
\]
and the subalgebra $\frak{t}^{\alpha}$ is a maximal abelian subalgebra of $\frak{e}_{7}$.
Thus, $\mathrm{rank} \, E_{7} = 7$.
The root system of $(\frak{e}_{7}^{\alpha})^{\mathbb{C}}$ with respect to $(\frak{t}^{\alpha})^{\mathbb{C}}$ is $\Sigma(\frak{e}_{7}^{\alpha}, \frak{t}^{\alpha}) = \Sigma_{\alpha, 0}(\frak{e}_{8}, \frak{t})$.
For each $\beta \in \Sigma(\frak{e}_{7}^{\alpha}, \frak{t}^{\alpha})$, the corresponding root space $\frak{r}_{\beta}(\frak{e}_{7}^{\alpha}, \frak{t}^{\alpha})$ is given by $\frak{r}_{\beta}(\frak{e}_{8}, \frak{t})$.
For example, if $\alpha = x_{7} - x_{8}$, then
\[
\begin{split}
& \Sigma(\frak{e}_{7}^{\alpha}, \frak{t}^{\alpha}) = \left\{ \pm x_{i} \pm x_{j} \ ;\ 1 \leq i < j \leq 6 \right\} \cup \left\{ \pm(x_{7} + x_{8}) \right\} \cup \left\{ \frac{1}{2}\sum_{i=1}^{8}\epsilon_{i}x_{i} \in \Sigma \ ;\ \epsilon_{7} = \epsilon_{8} \right\}, \\
& \frak{e}_{7}^{\alpha} = \frak{su}^{\bar{\alpha}}(2) \oplus \frak{spin}^{\alpha}(12) \oplus (\mathbb{O} \otimes^{+} \mathbb{H}) \oplus (\mathbb{O} \otimes^{-} \mathbb{H}),
\end{split}
\]
where $\mathbb{H} = \mathrm{span}_{\mathbb{R}}\{ e_{0}, e_{1}, e_{2}, e_{3} \}$.

We next recall the group $E_{6}$.
Let $\alpha, \beta \in \Sigma^{+}(\frak{e}_{8}, \frak{t})$ satisfy $2(\alpha, \beta)/(\alpha,\alpha) = \pm1$.
Replacing $\beta$ by $\alpha-\beta$ if necessary, we may assume $2(\alpha, \beta)/(\alpha,\alpha) = -1$.
The identity component of the centralizer $C(SU^{\alpha, \beta}(3), E_{8})$ is called the Lie group $E_{6}$, and denoted by $E_{6}^{\alpha, \beta}$.
It is well known that the group $E_{6}^{\alpha, \beta}$ is a simply connected compact simple Lie group.
The center is isomorphic to $\mathbb{Z}_{3}$.
The Lie algebra of $E_{6}^{\alpha, \beta}$ is the centralizer $C(\frak{su}^{\alpha, \beta}(3), \frak{e}_{8})$ and is denoted by $\frak{e}_{6}^{\alpha,\beta}$.
We have
\[
\frak{e}_{6}^{\alpha, \beta} = \frak{t}^{\alpha, \beta} \oplus \bigoplus_{\gamma \in (\Sigma_{\alpha,0} \cap \Sigma_{\beta,0})^{+}} \frak{r}_{\gamma}(\frak{e}_{8}, \frak{t}).
\]
The subalgebra $\frak{t}^{\alpha, \beta}$ is a maximal abelian subalgebra of $\frak{e}_{6}^{\alpha, \beta}$.
Thus, $\mathrm{rank}\, E_{6}^{\alpha,\beta} = 6$.
The root system of $(\frak{e}_{6}^{\alpha,\beta})^{\mathbb{C}}$ with respect to $(\frak{t}^{\alpha,\beta})^{\mathbb{C}}$ is $\Sigma(\frak{e}_{6}^{\alpha, \beta}, \frak{t}^{\alpha,\beta}) = \Sigma_{\alpha, 0} \cap \Sigma_{\beta,0}$.
For each $\gamma \in \Sigma(\frak{e}_{6}^{\alpha,\beta}, \frak{t}^{\alpha,\beta})$, the corresponding root space $\frak{r}_{\gamma}(\frak{e}_{6}^{\alpha, \beta}, \frak{t}^{\alpha, \beta})$ is $\frak{r}_{\gamma}(\frak{e}_{8}, \frak{t})$.
For example, if $\alpha = x_{7} - x_{8}$ and $\beta = x_{6} - x_{7}$, then
\[
\begin{split}
& \Sigma(\frak{e}_{6}^{\alpha, \beta}, \frak{t}^{\alpha, \beta}) 
= \{ \pm x_{i} \pm x_{j} \ ;\ 1 \leq i < j \leq 5 \} 
\cup \left\{ \frac{1}{2} \sum_{i=1}^{8}\epsilon_{i}x_{i} \in \Sigma \ ;\ \epsilon_{6} = \epsilon_{7} = \epsilon_{8} \right\}, \\
& \frak{e}_{6}^{\alpha, \beta} = \mathbb{R}t(0, \cdots, 0, 1,1,1) \oplus \frak{spin}^{\alpha, \beta}(10) \oplus (\mathbb{O} \otimes^{+} \mathbb{C}) \oplus (\mathbb{O} \otimes^{-} \mathbb{C}),
\end{split}
\]
where $\mathbb{C} = \mathrm{span}_{\mathbb{R}}\{ e_{0}, e_{1} \}$.

We next recall the group $F_{4}$.
Since $G_{2} \subset Spin(8)$, we may regard $G_{2}$ as a subgroup of $G_{2}$ of $E_{8}$.
Set $G_{2} = \psi(G_{2} \times \{ e \})$ and $G'_{2} = \psi(\{ e \} \times G_{2})$.
The identity component of the centralizer $C(G'_{2}, E_{8})$ is called the exceptional Lie group $F_{4}$.
It is well known that $F_{4}$ is a simply connected compact simple Lie group.
The Lie algebra of $F_{4}$ is the centralizer $C(\frak{g}'_{2}, \frak{e}_{8})$ and is denoted by $\frak{f}_{4}$.
Since $G'_{2}$ fixes $1 \in \mathbb{O}$ and acts transitively on the unit sphere in $\mathrm{Im}\mathbb{O}$, the fixed-point subspace is $\mathbb{R}e_{0}$. 
Hence
\[
C(\frak{g}'_{2}, V) = (\mathbb{O} \otimes^{+} \mathbb{R}e_{0}) \oplus (\mathbb{O} \otimes^{-} \mathbb{R}e_{0}).
\]
We write $\mathbb{O} \otimes^{+} \mathbb{R}e_{0}$ and $\mathbb{O} \otimes^{-} \mathbb{R}e_{0}$ by $\mathbb{O}^{+}$ and $\mathbb{O}^{-}$.
The identity component of $C(G'_{2}, \psi(Spin(16)))$ is the subgroup $Spin(9)$ arising from $Cl(\mathrm{span}_{\mathbb{R}}\{ e_{0}, \cdots, e_{8} \})$.
Hence,
\[
\frak{f}_{4} = \frak{spin}(9) \oplus \mathbb{O}^{+} \oplus \mathbb{O}^{-}.
\]






\section{Conjugation orbits and exceptional symmetric spaces}

In this section, we prove the main results of the paper.
Fix $\alpha = x_{7} - x_{8}$ and $\beta = x_{6} - x_{7}$, and write $E_{7} = E_{7}^{\alpha}$ and $E_{6} = E_{6}^{\alpha, \beta}$.
Since $SU^{\alpha}(2) \subset SU^{\alpha, \beta}(3) \subset G'_{2}$, we have the chain of inclusions
\[
G_{2} \subset F_{4} \subset E_{6} \subset E_{7} \subset E_{8}.
\]
Fix a bi-invariant metric on $E_{8}$, and regard these compact exceptional Lie groups as compact symmetric spaces with the induced metrics.

\subsection{Maximal antipodal subgroups of $E_{8}$}

We recall Adams's classification of the maximal antipodal subgroups of $E_{8}$ \cite{Adams}.
Set
\[
A(G_{2}) = \{ \gamma_{i} \ ;\ 0 \leq i \leq 7\}, \qquad
A(Spin(8)) = \{ \gamma_{i}^{a,b} \ ;\ 0 \leq i \leq 7, a,b = 0,1 \}.
\]
The subgroups $A(G_{2})$ and $A(Spin(8))$ are maximal antipodal subgroups of $G_{2}$ and $Spin(8)$ of order $8$ and $32$, respectively \cite{Tanaka-Tasaki-Yasukura, Wood}.
Using these subgroups, define
\[
\begin{split}
A(E_{8}) &= \psi \big( A(Spin(8)) \times A(Spin(8)) \big) = \left\{ \psi( \gamma_{i}^{0,0}, \gamma_{j}^{a,b}) \ ;\ 0 \leq i,j \leq 7, a,b = 0,1 \right\}.
\end{split}
\]
The subgroup $A(E_{8})$ is an antipodal subgroup of $E_{8}$ of order $256$.
Let $T\subset E_{8}$ be the maximal torus fixed in Subsection 3.1.
Set $A(T) = \{ t \in T \ ;\ t^{2} = e \}$.
This is the maximal antipodal subgroup of $T$ of order $256$.
In $E_{8}$, there exists an involutive element $x$ such that $xtx^{-1} = t^{-1}$ for any $t \in T$.
Then,
\[
B(E_{8}) = A(T) \cup x(A(T))
\]
is an antipodal subgroup of $E_{8}$ of order $512$.

\begin{thm} \cite{Adams}
The subgroups $A(E_{8})$ and $B(E_{8})$ are maximal antipodal subgroups of $E_{8}$ of order $256$ and $512$, respectively.
Every maximal antipodal subgroup is conjugate to either $A(E_{8})$ or $B(E_{8})$.
\end{thm}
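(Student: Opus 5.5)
This theorem is quoted from Adams, so the proof would follow the standard strategy for elementary abelian $2$-subgroups of $E_{8}$, adapted to the present model $\frak{e}_{8} = L \oplus V$. The plan has three parts: verify that $A(E_{8})$ and $B(E_{8})$ are antipodal subgroups of the stated orders, show that each is maximal, and show that every maximal antipodal subgroup is conjugate to one of them.

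\textbf{Orders and antipodality.} For $A(E_{8})$, note that $A(Spin(8))$ is an elementary abelian $2$-group of order $32$. This is visible from the table of the $\gamma_{i}^{a,b}$ as products of commuting Clifford monomials of even degree squaring to $1$. Hence $A(Spin(8)) \times A(Spin(8))$ is elementary abelian of order $1024$. Its image under $\psi$ has order $1024/|\mathrm{Ker}\,\psi \cap (A \times A)| = 1024/4 = 256$, since $\mathrm{Ker}\,\psi = \{(\gamma_{0}^{a,b},\gamma_{0}^{a,b})\}$ lies in $A \times A$. For $B(E_{8})$, I would take $x$ to be a lift of $-1$ in the Weyl group. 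Because $x$ inverts $T$, it centralizes $A(T)$. I would choose $x$ with $x^{2}=e$; such an $x$ exists in the rank-$8$ case and can be written explicitly as an element of $\phi(Spin(16))$, namely $\phi(e_{0}e_{2}\cdots e_{14})$ up to sign. Then $(xt)^{2} = x t x^{-1} x^{2} t = t^{-1}t = e$, so $B(E_{8})$ is elementary abelian of order $512$.

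\textbf{Maximality.} Here I would use the correspondence between an elementary abelian $2$-subgroup $A$ and the simultaneous eigenspace decomposition of $\frak{e}_{8}$ under $\mathrm{Ad}(A)$. The subgroup $A$ is maximal if and only if its centralizer contains no further involutions outside $A$. For $B(E_{8})$, the centralizer of $A(T)$ is $T \rtimes$ (a $2$-group of Weyl elements). Any involution commuting with all of $B(E_{8})$ must normalize $T$ and act on $\frak{t}$ commuting with $-1$. A short computation then shows it already lies in $B(E_{8})$. For $A(E_{8})$, first observe that every character $\chi$ of $A(E_{8})$ occurs in $\frak{e}_{8}$ with multiplicity $248/... $; more precisely, the zero-character eigenspace $C(\frak{a},\frak{e}_{8})$ is $0$. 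This can be checked from the decomposition $L \oplus \mathbb{O}\otimes^{+}\mathbb{O} \oplus \mathbb{O}\otimes^{-}\mathbb{O}$: each $\gamma_{i}$ acts diagonally in the basis $e_{k}$, and the only common fixed vectors of all $\gamma_{i}\otimes\gamma_{j}$ with the central twists vanish. It follows that the centralizer of $A(E_{8})$ is finite. Maximality then reduces to showing that the finite group $C(A(E_{8}),E_{8})$ is exactly $A(E_{8})$. I would get this by comparing with the centralizer in $\psi(Spin(8)\times Spin(8))$, which is computed from the maximality of $A(Spin(8))$ (Wood, Tanaka–Tasaki–Yasukura).

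\textbf{Classification.} Let $A$ be any maximal antipodal subgroup. If $A$ is toral, that is, contained in some maximal torus, then after conjugation $A \subset T$. Maximality forces $A \supset A(T)$, and the standard argument extending by a Weyl-inverting involution shows that $A$ is conjugate to $B(E_{8})$. If $A$ is nontoral, I would pick a nontrivial $a \in A$. Its centralizer is $Spin(16)/\mathbb{Z}_{2}$ or $(Sp(1)\times E_{7})/\mathbb{Z}_{2}$, according to Table \ref{involutions}. I would then recursively classify elementary abelian $2$-subgroups of these centralizers using Griess's rank-$4$/rank-$5$ nontoral structure, and show that the only nontoral maximal one is conjugate to $A(E_{8})$. \emph{The main obstacle is this nontoral uniqueness step.} It requires control of all conjugacy classes of elementary abelian $2$-subgroups of $E_{8}$, via Griess's theorem that nontoral subgroups contain a unique-up-to-conjugacy rank-$5$ subgroup with centralizer of a specific form. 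In practice I would cite Adams and Griess for this rather than reprove it.
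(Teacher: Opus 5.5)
The paper gives no proof of this statement; it is simply quoted from Adams. Your outline therefore has to stand on its own, and its classification step rests on a false dichotomy.

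\textbf{The toral/nontoral split.} No maximal antipodal subgroup is toral. If $A$ lay in a maximal torus, then after conjugation $A \subset A(T) \subsetneq B(E_{8})$, contradicting maximality. So $B(E_{8})$ cannot arise in your toral case: it has rank $9 > 8 = \mathrm{rank}\,E_{8}$, so it is itself nontoral. Consequently the claim you intend to prove or cite in the nontoral case, that every nontoral maximal antipodal subgroup is conjugate to $A(E_{8})$, is false.

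The split that actually separates the two classes is whether $A$ contains a conjugate of $A(T)$. If it does, then $A \subset C(A(T),E_{8})$. The input you need is $C(A(T),E_{8}) = T \cup xT$: the identity component is $T$ (every root is odd on some coroot), and $W(E_{8})$ acts on $A(T) \cong \Lambda/2\Lambda$ with kernel $\{\pm 1\}$. Granting this, the argument closes quickly:
\begin{itemize}
\item every element of $xT$ is an involution;
\item $xs$ and $xs'$ commute if and only if $s^{-1}s' \in A(T)$, so $A = A(T) \cup xsA(T)$ for some $s \in T$;
\item conjugation by $u \in T$ sends $xs$ to $xsu^{-2}$, so choosing $u^{2}=s$ shows $A$ is conjugate to $B(E_{8})$.
\end{itemize}
The same fact is what your ``short computation'' for the maximality of $B(E_{8})$ really uses. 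The condition you state there, acting on $\mathfrak{t}$ commuting with $-1$, is vacuous. Only when $A$ contains no conjugate of $A(T)$ is the Adams--Griess analysis needed, and it must then produce $A(E_{8})$. This is consistent, since $A(E_{8})$ has finite centralizer and so contains no such conjugate.

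\textbf{Maximality of $A(E_{8})$.} Reducing to $C(A(E_{8}),E_{8}) = A(E_{8})$ is fine, and the equality is true. But comparing with the centralizer inside $\psi(Spin(8)\times Spin(8))$ only works once you know the whole centralizer lies there, which you do not show.
\begin{itemize}
\item Since $p_{0} = \phi(-1) \in A(E_{8})$ and $C(p_{0},E_{8}) = \phi(Spin(16))$, the centralizer lies in $\phi(Spin(16))$.
\item Inside $\phi(Spin(16))$, however, the centralizer of $\psi(\gamma_{0}^{0,0},\gamma_{0}^{0,1}) = \phi(e_{8}\cdots e_{15})$ is disconnected. Because $\mathrm{Ker}\,\phi = \{1, e_{0}\cdots e_{15}\}$, it contains elements interchanging the two $8$-planes.
\item That component must be excluded. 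Such an element conjugates $\psi(\gamma_{1}^{0,0},\gamma_{0}^{0,0})$ into $\psi(\{1\}\times Spin(8))$, which does not contain it since $\gamma_{1}^{0,0}$ is not central.
\end{itemize}
Also, maximality of $A(Spin(8))$ only controls involutions in its centralizer. What you need is $C(A(Spin(8)),Spin(8)) = A(Spin(8))$. This holds because that centralizer maps to diagonal sign matrices in $SO(8)$, and the index sets $S$ of the monomials $\pm e_{S}$ in $A(Spin(8))$ form the self-dual extended Hamming code.
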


Since the elements of $A(G_{2})$ are constructed using $\mathbb{O}$, the construction of $A(E_{8})$ is based on $\mathbb{O}$.
On the other hand, $B(E_{8})$ is constructed using a maximal torus.
In the present article, we consider the maximal antipodal subgroup $A(E_{8})$.

\begin{thm} \label{main-1}
Let $G \in \{ G_{2}, F_{4}, E_{6}, E_{7}, E_{8} \}$ and $p \in A(E_{8})$.
Then, $p G p^{-1} = G$ and $\theta_{p} : G \to G \ ;\ p \mapsto gpg^{-1}$ is an involutive automorphism.
The conjugation orbit $\mathcal{O}_{G}(p)$ is a simply connected compact symmetric space.
More precisely, it is isomorphic to one of $FI, FII, EI, \cdots, EIX$, or $G_{2}/SO(4)$.
Moreover, $\mathcal{O}_{G}(p)$ is a totally geodesic submanifold of $E_{8}$.
\end{thm}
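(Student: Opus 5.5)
We prove the three assertions in turn: normalization and involutivity, identification of $\mathcal{O}_{G}(p)$ as a symmetric space, and total geodesy.

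\textbf{Step 1: $p$ normalizes $G$.} Write $p = \psi(\gamma_{i}^{0,0}, \gamma_{j}^{a,b})$. The first factor $\psi(\gamma_i^{0,0},1)$ lies in $G_2 = \psi(G_2\times\{e\})$, and $\psi(G_2\times\{e\})$ commutes with $G_2' = \psi(\{e\}\times G_2)$. Hence it commutes with $SU^{\alpha}(2)$ and $SU^{\alpha,\beta}(3)$ (both contained in $G_2'$), so it lies in every group of the chain and normalizes it. It therefore suffices to treat $q=\psi(1,\gamma_j^{a,b})$.

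\begin{itemize}
\item For $G=E_8$ there is nothing to prove.
\item For $G_2$, the element $q$ lies in $\psi(\{e\}\times Spin(8))$, which centralizes $\psi(Spin(8)\times\{e\})\supset G_2$.
\item For $F_4$, $E_6$, $E_7$: each is the identity component of the centralizer of a subgroup $H\in\{G_2', SU^{\alpha,\beta}(3), SU^{\alpha}(2)\}$. If $qHq^{-1}=H$, then $qGq^{-1}=G$.
\end{itemize}

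To check $qHq^{-1}=H$, compute on the Lie algebra. The element $\gamma_j$ is an automorphism of $\mathbb{O}$, so $\gamma_j^{0,0}$ normalizes $G_2$. The elements $\gamma_0^{a,b}$ are central in $Spin(8)$, and $\gamma_j^{a,b}=\gamma_j^{0,0}\gamma_0^{a,b}$. Hence $q$ normalizes $G_2'$ and therefore $F_4$.

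For $SU^{\alpha}(2)$ and $SU^{\alpha,\beta}(3)$ with $\alpha=x_7-x_8$, $\beta=x_6-x_7$: the listed formulas show that $\gamma_j^{a,b}$ acts on $\frak{t}^1_8$ by a signed permutation of coordinates, or normalizes the torus. From this, $\mathrm{Ad}(q)$ permutes the roots $\pm\alpha,\pm\beta,\pm(\alpha+\beta)$ up to sign. I expect this to need a short case check over $j$. Some $\gamma_j$ (e.g.\ $j=4,\dots,7$) do not preserve $T$, and then I would instead check directly that $\mathrm{Ad}(q)$ preserves $\frak{su}^{\alpha,\beta}(3)$ using the $\mathbb{O}\otimes\mathbb{C}$, $\mathbb{O}\otimes\mathbb{H}$ descriptions of $\frak{e}_6,\frak{e}_7$.

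Since $p^2=e$, $\theta_p$ is an involutive automorphism of $G$ (possibly trivial, in which case the orbit is a point).

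\textbf{Step 2: identify the orbit.} Define $\sigma=\theta_p|_G$. The map $G/F(\sigma,G)\to\mathcal{O}_G(p)$, $g\mapsto gpg^{-1}$, is a diffeomorphism. This holds because $gpg^{-1}=p$ iff $g$ commutes with $p$ iff $\sigma(g)=g$. By Table \ref{involutions}, $F(\sigma,G)$ is connected, so $G/F(\sigma,G)$ is a simply connected compact symmetric space. Its type is read off from $\dim F^{+}(p,\frak g)$.

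Thus, for each $p\in A(E_8)$ and each $G$, one computes the trace of $\mathrm{Ad}(p)$ on $\frak g$. For $G=E_8$ this is done on $L\oplus(\mathbb{O}\otimes^{+}\mathbb{O})\oplus(\mathbb{O}\otimes^{-}\mathbb{O})$ from the eigenspaces of the $\gamma$'s. For the smaller groups one uses the decompositions
\[
\frak{e}_7=\frak{su}(2)\oplus\frak{spin}(12)\oplus(\mathbb{O}\otimes^{\pm}\mathbb{H}),\quad \frak{e}_6=\mathbb R\oplus\frak{spin}(10)\oplus(\mathbb{O}\otimes^{\pm}\mathbb{C}),\quad \frak f_4=\frak{spin}(9)\oplus\mathbb{O}^{\pm}.
\]
For $G_2$, the only nontrivial involution gives $G_2/SO(4)$. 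The dimension then matches exactly one row of Table \ref{involutions}. For $E_7$ one must also rule out that $p$ acts trivially only up to the center; this is handled by noting that an inner involution with $\dim F^+=133$ is trivial.

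\textbf{Step 3: total geodesy.} Here $\mathcal O_G(p)\subset\mathcal O_{E_8}(p)$, and $\mathcal O_{E_8}(p)$ is a polar of $e$ in $E_8$ (Subsection \ref{polars and antipodal sets}), hence totally geodesic. Moreover, $G$ is a totally geodesic subgroup with the induced bi-invariant metric, and $\mathcal O_G(p)$ is a polar of $e$ in $G$, i.e.\ a component of $F(s_e,G)$, since $p^2=e$. A totally geodesic submanifold of a totally geodesic submanifold is totally geodesic, which gives the claim.

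\textbf{Main obstacle.} The main obstacle is the bookkeeping in Steps 1–2: verifying normalization of $SU^{\alpha,\beta}(3)$ for the $\gamma_j$ not preserving $T$, and computing the dimensions $\dim F^{+}(p,\frak g)$ for all $256\times 5$ cases. I would reduce the latter by symmetry, using the action of the normalizer of $A(E_8)$ (triality and the permutation action on the $\gamma_j$) within each $G$.
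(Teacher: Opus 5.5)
\textbf{The gap is in Step 3.} You claim that $\mathcal{O}_G(p)$ is a polar of $e$ in $G$, i.e.\ a component of $F(s_e,G)$. That presupposes $p\in G$, but the theorem concerns arbitrary $p\in A(E_8)$. If $p\notin G$, then $\mathcal{O}_G(p)\cap G=\emptyset$, because $gpg^{-1}\in G$ with $g\in G$ would force $p\in G$.

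These are exactly the orbits the construction is designed to produce: $EV_i=\mathcal{O}_{E_7}(p_{2i})$, $EVII_i=\mathcal{O}_{E_7}(q_{2i})$, $EI_j=\mathcal{O}_{E_6}(p_j)$ and $EIV_j=\mathcal{O}_{E_6}(q_j)$ all come from points outside the group. Since $EV,EVII$ (resp.\ $EI,EIV$) are not polars of $E_7$ (resp.\ $E_6$) at all, translating a polar by an element centralizing $G$ cannot repair the argument. Your preliminary remark that $\mathcal{O}_G(p)\subset\mathcal{O}_{E_8}(p)$, with the latter totally geodesic, does not help either: a submanifold of a totally geodesic submanifold need not be totally geodesic.

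The missing step is to move the orbit into $G$ by a right translation, which is how the paper argues:
\begin{itemize}
\item Because $\theta_p$ preserves $G$, $gpg^{-1}p^{-1}=g\,\theta_p(g)^{-1}\in G$.
\item Hence $\mathcal{O}_G(p)p^{-1}$ is the image of the Cartan embedding of $G/F(\theta_p,G)$, namely $\exp F^{-}(\mathrm{Ad}(p),\mathfrak g)$.
\item This set is totally geodesic in $G$, being $\exp$ of a Lie triple system, and therefore totally geodesic in $E_8$.
\item Right multiplication by $p$ is an isometry of the bi-invariant metric, so $\mathcal{O}_G(p)$ itself is totally geodesic.
\end{itemize}

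Two smaller points. In Step 1, your expectation that $\gamma_4,\dots,\gamma_7$ do not preserve $T$ is mistaken. Every $\gamma_j$ is diagonal in $e_0,\dots,e_7$, so $\psi(\gamma_0^{0,0},\gamma_j^{a,b})$ normalizes $T$. It acts on $\mathfrak t^{1}_8$ trivially for $j\le 3$ (these elements lie in $T$) and by $-1$ for $j\ge 4$. In both cases $\mathbb{R}(iA_\gamma)$ and $\mathfrak r_\gamma$ are preserved for $\gamma\in\{\alpha,\beta,\alpha+\beta\}$, so $\mathfrak{su}^{\alpha}(2)$ and $\mathfrak{su}^{\alpha,\beta}(3)$ are normalized uniformly, with no case split. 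Note that an arbitrary signed permutation of coordinates would not have sufficed.

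In Step 2, the element-by-element dimension count is unnecessary for this statement. Once $\theta_p|_G$ is a nontrivial involutive automorphism, Table~\ref{involutions} (together with the single involution class of $G_2$) already forces one of the listed types. Determining which type occurs for which $p$ is the content of Theorem~\ref{main-2}, not of this theorem.
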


\begin{proof}
For every $p \in A(E_{8})$, the adjoint action $\mathrm{Ad}(p)$ preserves each of $\frak{su}^{\alpha}(2), \frak{su}^{\alpha, \beta}(3),$ $\frak{g}_{2}'$.
By the definition of $E_{7}, E_{6}, F_{4}$, the adjoint action $\mathrm{Ad}(p)$ preserves $\frak{e}_{7}, \frak{e}_{6}, \frak{f}_{4}$, respectively.
Hence, $\theta_{p}$ restricts to an involutive automorphism of each of $E_{7}, E_{6}, F_{4}$.
On the other hand, by the construction of $A(E_{8})$, we also have $\mathrm{Ad}(p)(\frak{g}_{2}) \subset \frak{g}_{2}$, and $\theta_{p}$ restricts to an involutive automorphism of $G_{2}$.
Since each $G \in \{ G_{2}, F_{4}, E_{6}, E_{7}, E_{8}\}$ is simply connected, the subgroup $F(\theta_{p}, G)$ is connected.
Since $\mathcal{O}_{G}(p) \cong G/F(\theta_{p}, G)$, the orbit $\mathcal{O}_{G}(p)$ is a simply connected compact symmetric space.
Moreover, 
\[
\mathrm{exp} \bigl( F^{-}(\mathrm{Ad}(p), \frak{g}) \bigr) = \bigcup_{g \in G}g p g^{-1} p^{-1} = \mathcal{O}_{G}(p)p^{-1}
\]
is a totally geodesic submanifold of $G$.
Since $G$ is a totally geodesic submanifold of $E_{8}$, the orbit $\mathcal{O}_{G}(p)$ is a totally geodesic submanifold of $E_{8}$.
\end{proof}

\begin{remark}
Theorem \ref{main-1} does not hold with $A(E_{8})$ replaced by $B(E_{8})$.
In fact, for each $p \in B(E_{8})$, the automorphism $\theta_{p}$ preserves $E_{7}$ and $E_{6}$.
On the other hand, if $\gamma = \frac{1}{2}(x_{1} + \cdots + x_{8})$, then $\theta_{\tau_{\gamma}}$ does not preserve $F_{4}$.
\end{remark}

In Subsections \ref{E_{8}-orbit}, $\cdots$, \ref{G_{2}-orbit}, we will study $\mathcal{O}_{G}(p)$ for each $G \in \{ G_{2}, F_{4}, E_{6}, E_{7}, E_{8}\}$ and $p \in A(E_{8})$.
We introduce the following notation for later use.
For $1 \leq k \leq 7$, set
\[
p_{k} = \psi( \gamma_{1}^{0,0}, \gamma_{k}^{0,0}), \quad
q_{k} = \psi( \gamma_{0}^{0,0}, \gamma_{k}^{1,0}), \quad
r_{k} = \psi( \gamma_{0}^{0,0}, \gamma_{k}^{0,0}). 
\]
Also set
\[
p_{0} = \psi( \gamma_{0}^{0,0}, \gamma_{0}^{1,0} ), \quad
q_{0} = \psi( \gamma_{1}^{0,0}, \gamma_{0}^{0,0} ), \quad
r_{0} = \psi( \gamma_{0}^{0,0}, \gamma_{0}^{0,0}). 
\]
Since $r_{0} = e$ in $E_{8}$, we write $e$ by $r_{0}$ when we consider $e$ as an element of $A(E_{8})$.


\subsection{The $E_{8}$-orbits} \label{E_{8}-orbit}

We study $\mathcal{O}_{E_{8}}(p)$ for $p \in A(E_{8})$.
We first recall the polars of $E_{8}$.
According to the classification of polars by Chen and Nagano \cite{Chen-Nagano1}, $E_{8}$ has two nontrivial polars, isometric to $EVIII$ and $EIX$, respectively.
Let $EVIII_{+}$ and $EIX_{+}$ denote the polars of $e$ isometric to $EVIII$ and $EIX$, respectively.

\begin{lemm} \label{E_{8}}
Set $A(EVIII_{+}) = A(E_{8}) \cap EVIII_{+}$ and $A(EIX_{+}) = A(E_{8}) \cap EIX_{+}$.
Then,
\[
\begin{split}
& A(EVIII_{+}) = \Big\{ \psi( \gamma_{i}^{0,0}, \gamma_{j}^{a,b} ) \ ;\ 1 \leq i, j \leq 7, a,b = 0,1 \Big\} \cup \Big\{ \psi(\gamma_{0}^{0,0}, \gamma_{0}^{a,b}) \ ;\ (a,b) \not= (0,0) \Big\}, \\
& A(EIX_{+}) = \Big\{ \psi( \gamma_{0}^{0,0}, \gamma_{i}^{a,b} ), \psi( \gamma_{i}^{0,0}, \gamma_{0}^{a,b}) \ ;\ 1 \leq i \leq 7, a,b = 0,1 \Big\}.
\end{split}
\]
For every $p \in A(E_{8})$, the orbit $\mathcal{O}_{E_{8}}(p)$ is one of $\{ r_{0} \}, EVIII_{+}$, and $EIX_{+}$.
\end{lemm}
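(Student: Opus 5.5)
The plan is to sort each element $p \in A(E_{8})$ by the dimension of its fixed algebra $F^{+}(\mathrm{Ad}(p), \frak{e}_{8})$. By Table \ref{involutions}, this dimension is $248$ exactly when $p = e$, it is $120$ for the polar $EVIII_{+}$, and it is $136$ for $EIX_{+}$. Since $p^{2} = e$, the element $p$ lies in $F(s_{e}, E_{8})$. Its connected component there is the orbit $\mathcal{O}_{E_{8}}(p) \cong E_{8}/F(\theta_{p}, E_{8})$. By Table \ref{polars}, the polars of $e$ in $E_{8}$ are exactly $EVIII_{+}$ and $EIX_{+}$, and there is no nontrivial pole because $C(E_{8})$ is trivial. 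So each orbit is $\{ r_{0} \}$, $EVIII_{+}$ or $EIX_{+}$, and which one it is can be read off from $\dim F^{+}(\mathrm{Ad}(p), \frak{e}_{8})$. This already gives the last sentence of the lemma; the two explicit lists then follow by computing this dimension for each $p = \psi(\gamma_{i}^{0,0}, \gamma_{j}^{a,b})$.

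For the computation I will write $p = \phi(g)$ with $g = \gamma_{i}^{0,0}\gamma_{j}^{a,b} \in Spin^{0}(8)\cdot Spin^{1}(8) \subset Spin(16)$. I will split $\frak{e}_{8} = L \oplus (\mathbb{O}\otimes^{+}\mathbb{O}) \oplus (\mathbb{O}\otimes^{-}\mathbb{O})$ and use the decomposition $L = \frak{spin}^{0}(8) \oplus \frak{spin}^{1}(8) \oplus (\mathbb{R}^{8}\otimes\mathbb{R}^{8})$. On each piece, $\mathrm{Ad}(p)$ acts through explicit $\pm1$-eigenspaces:
\begin{itemize}
\item on $\frak{spin}^{k}(8) \cong \frak{so}(8)$ and on $\mathbb{R}^{8}\otimes\mathbb{R}^{8}$ it acts through the vector representations $\pi(\gamma)$;
\item on $\mathbb{O}\otimes^{\pm}\mathbb{O}$ it acts through $\tilde{\Delta}_{8}^{\pm}(\gamma_{i}^{0,0}) \otimes \tilde{\Delta}_{8}^{\pm}(\gamma_{j}^{a,b})$, which is $\pm\gamma_{i}\otimes\pm\gamma_{j}$ by the definition of $\gamma^{a,b}$.
\end{itemize}
Each $\gamma_{k}$ with $k \geq 1$ has signature $(4,4)$ on $\mathbb{O}$, while $\gamma_{0}$ is $\pm\mathrm{id}$. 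So every trace is an easy count.

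The cases are the following.
\begin{itemize}
\item $i, j \geq 1$: every factor has signature $(4,4)$, so a symmetric count gives $\dim F^{+} = 120$, hence $EVIII$.
\item Exactly one of $i, j$ is $0$: one factor acts by $\pm\mathrm{id}$ and the other with signature $(4,4)$. The count should give $136$, hence $EIX$. Alternatively, these elements are conjugate in $Spin(8)$ to some $\tau_{x_{k}\pm x_{l}}$ by the identities listed in Subsection \ref{O}, and $\tau_{\alpha}$ centralizes $E_{7}^{\alpha}\cdot SU^{\alpha}(2)$, which again gives $EIX$.
\item $i = j = 0$ with $(a,b) \neq (0,0)$: then $p = \phi(\pm 1)$ or $\phi(\pm e_{0}\cdots e_{7})$. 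For $\phi(-1)$ the fixed algebra is exactly $L$, of dimension $120$, hence $EVIII$. For $\phi(e_{0}\cdots e_{7})$ it is $\frak{spin}(8)\oplus\frak{spin}(8)\oplus(\mathbb{O}\otimes^{+}\mathbb{O})$ or the analogous space with $\otimes^{-}$, again of dimension $120$.
\end{itemize}

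The step I expect to be the main obstacle is the sign bookkeeping in the middle case, and also in the first case when $(a,b)$ flips the signs on $\mathbb{O}^{+}$ versus $\mathbb{O}^{-}$. I will verify it using the trace formula $\dim F^{+} = (248 + \mathrm{tr}\,\mathrm{Ad}(p))/2$. The target is $\mathrm{tr} = -8$ for $EVIII$ and $\mathrm{tr} = 24$ for $EIX$. I will compute the contribution of each summand separately:
\begin{itemize}
\item $\mathrm{tr}\,\Lambda^{2}(\pi(\gamma))$ on each $\frak{spin}^{k}(8)$;
\item $\mathrm{tr}\,\pi(\gamma_{i})\cdot\mathrm{tr}\,\pi(\gamma_{j}^{a,b})$ on $\mathbb{R}^{8}\otimes\mathbb{R}^{8}$, with the vector images taken from the $Spin(8)$ identification;
\item $\pm\mathrm{tr}(\gamma_{i})\,\mathrm{tr}(\gamma_{j})$ on the spinor parts.
\end{itemize}
The vector representation traces of $\gamma_{k}^{a,b}$ are obtained via the triality table in Subsection \ref{O}: $\gamma_{k}^{a,b}$ for $k \geq 1$ maps to a product of four $e$'s, and so has vector trace $0$ or $\pm 8$ depending on which $e$'s appear.
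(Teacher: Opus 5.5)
Your proposal is correct and is essentially the paper's proof: each nontrivial orbit is a polar of $e$, hence $EVIII_{+}$ or $EIX_{+}$ according as $\dim F^{+}(\mathrm{Ad}(p),\mathfrak{e}_{8})$ is $120$ or $136$, and you compute this dimension piece by piece on $L \oplus (\mathbb{O}\otimes^{+}\mathbb{O}) \oplus (\mathbb{O}\otimes^{-}\mathbb{O})$; your traces $-8$ and $24$ check out in every case. One small slip: for $k \geq 1$ the vector image of $\gamma_{k}^{a,b}$ is $(-1)^{b}\gamma_{k}$, so its vector trace is always $0$, and the values $\pm 8$ occur only for $k = 0$.
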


\begin{proof}
Since every element of $A(E_{8})$ is an involution, we have $A(E_{8}) \subset F(s_{e}, E_{8})$.
Therefore, for every $p\in A(E_{8}) \setminus \{ r_{0} \}$, the orbit $\mathcal{O}_{E_{8}}(p)$ is a polar of $e$ in $E_{8}$.
The dimensions of these polars are $\dim EVIII_{+} = 128$ and $\dim EIX_{+} = 112$.
Consequently,
\[
\begin{split}
& A(EVIII_{+}) = \{ p \in A(E_{8}) \ ;\ \dim F^{+}(\mathrm{Ad}(p), \frak{e}_{8}) = 120 \}, \\
& A(EIX_{+}) = \{ p \in A(E_{8}) \ ;\ \dim F^{+}(\mathrm{Ad}(p), \frak{e}_{8}) = 136 \}. 
\end{split}
\]
Let $p = \psi(a,b) \in A(E_{8})$.
Since $\frak{e}_{8} = L \oplus V$,
\[
F^{+}(\mathrm{Ad}(p), \frak{e}_{8}) = F^{+}(\mathrm{Ad}(p)|_{L}, L) \oplus F^{+}(\mathrm{Ad}(p)|_{V}, V).
\]
On the $V$-component, we have
\[
\begin{split}
& F^{+}(\mathrm{Ad}(p)|_{V}, V) = F^{+}(\Delta(\psi(a,b)), V) \\
&= \bigl( F^{+}(\Delta_{8}^{+}(a), \mathbb{O}^{+}) \otimes F^{+}(\Delta_{8}^{+}(b), \mathbb{O}^{+}) \bigr)
\oplus \bigl( F^{-}(\Delta_{8}^{+}(a), \mathbb{O}^{+}) \otimes F^{-}(\Delta_{8}^{+}(b), \mathbb{O}^{+}) \bigr) \\
& \quad \quad \oplus \bigl( F^{+}(\Delta_{8}^{-}(a), \mathbb{O}^{-}) \otimes F^{+}(\Delta_{8}^{-}(b), \mathbb{O}^{-}) \bigr)
\oplus \bigl( F^{-}(\Delta_{8}^{-}(a), \mathbb{O}^{-}) \otimes F^{-}(\Delta_{8}^{-}(b), \mathbb{O}^{-}) \bigr). \\
\end{split}
\]
For each $\gamma_{i}^{a,b} \in A(Spin(8))$,
\[
\begin{array}{lll} \vspace{1mm}
\dim F^{+}(\Delta_{8}^{+}(\gamma_{0}^{0,0}), \mathbb{O}^{+}) = \dim F^{+}(\Delta_{8}^{-}(\gamma_{0}^{0,0}), \mathbb{O}^{-}) = 8, \\ \vspace{1mm}
\dim F^{-}(\Delta_{8}^{+}(\gamma_{0}^{0,0}), \mathbb{O}^{+}) = \dim F^{-}(\Delta_{8}^{-}(\gamma_{0}^{0,0}), \mathbb{O}^{-}) = 0, \\ \vspace{1mm}
\dim F^{-}(\Delta_{8}^{+}(\gamma_{0}^{1,0}), \mathbb{O}^{+}) = \dim F^{-}(\Delta_{8}^{-}(\gamma_{0}^{1,0}), \mathbb{O}^{-}) = 8, \\ \vspace{1mm}
\dim F^{+}(\Delta_{8}^{+}(\gamma_{0}^{1,0}), \mathbb{O}^{+}) = \dim F^{+}(\Delta_{8}^{-}(\gamma_{0}^{1,0}), \mathbb{O}^{-}) = 0, \\ \vspace{1mm}
\dim F^{+}(\Delta_{8}^{+}(\gamma_{0}^{0,1}), \mathbb{O}^{+}) = \dim F^{-}(\Delta_{8}^{-}(\gamma_{0}^{0,1}), \mathbb{O}^{-}) = 8, \\ \vspace{1mm}
\dim F^{-}(\Delta_{8}^{+}(\gamma_{0}^{0,1}), \mathbb{O}^{+}) = \dim F^{+}(\Delta_{8}^{-}(\gamma_{0}^{0,1}), \mathbb{O}^{-}) = 0, \\ \vspace{1mm}
\dim F^{-}(\Delta_{8}^{+}(\gamma_{0}^{1,1}), \mathbb{O}^{+}) = \dim F^{+}(\Delta_{8}^{-}(\gamma_{0}^{1,1}), \mathbb{O}^{-}) = 8, \\ \vspace{1mm}
\dim F^{+}(\Delta_{8}^{+}(\gamma_{0}^{1,1}), \mathbb{O}^{+}) = \dim F^{-}(\Delta_{8}^{-}(\gamma_{0}^{1,1}), \mathbb{O}^{-}) = 0, \\
\dim F^{\pm}(\Delta_{8}^{\pm}(\gamma_{i}^{a,b}), \mathbb{O}^{\pm}) = 4 \ \  (1 \leq i \leq 7, a,b = 0,1).
\end{array}
\]
On the $L$-component, the fixed point subalgebra is given, up to isomorphism, by
\[
\begin{split}
&F^{+}( \mathrm{Ad}(\psi(\gamma_{i}^{0,0}, \gamma_{j}^{a,b}))|_{L}, L) \\
& \quad\quad \cong
\begin{cases}
\frak{spin}(8) \oplus \frak{spin}(8) & (i,j \not= 0, \ a,b = 0,1 \ \text{or} \ i = j = 0, \ (a,b) \in \{ (0,1), (1,1) \}), \\
\frak{spin}(12) \oplus \frak{spin}(4) & (\text{if exactly one of $i$ and $j$ is zero}.), \\
\frak{spin}(16) & (i = j = 0, (a,b) \in \{ (0,0), (1,0) \}). \\
\end{cases}
\end{split}
\]
In particular, $\dim F^{+}( \mathrm{Ad}(p)|_{L}, L)$ takes one of the values $56, 72, 120$.
Combining the dimensions of the fixed point subspaces in the $L$- and $V$-components yields the asserted descriptions of $A(EVIII_{+})$ and $A(EIX_{+})$.
\end{proof}

In particular,
\[
EVIII_{+} = \mathcal{O}_{E_{8}}(p_{0}), \qquad
EIX_{+} = \mathcal{O}_{E_{8}}(q_{0}).
\]


\subsection{The $E_{7}$-orbits} \label{E_{7}-orbit}

We consider $\mathcal{O}_{E_{7}}(p)$ for $p \in A(E_{8})$.
We first determine the intersection $A(E_{7}) = A(E_{8}) \cap E_{7}$.
The subgroup
\[
A(E_{8}) \cap C(SU^{\alpha}(2), E_{8}) = \big\{ \psi(\gamma_{i}^{0,0}, \gamma_{0}^{a,b}), \psi(\gamma_{i}^{0,0}, \gamma_{1}^{a,b}) \ ; \ 0 \leq i \leq 7, a,b = 0,1 \big\}
\]
is contained in a subgroup $\phi(Spin(12)) \cong Spin(12)$ of $E_{7}$.
Therefore, we have
\[
A(E_{7}) = A(E_{8}) \cap C(SU^{\alpha}(2), E_{8}).
\]

According to the classification of polars \cite{Chen-Nagano1}, 
$E_{7}$ has three polars: two are isometric to $EVI$, and the third is a pole.
Let $EVI_{+}$ and $EVI_{+}'$ denote the polars of $e$ isometric to $EVI$ contained in $EVIII_{+}$ and $EIX_{+}$, respectively.
The nontrivial pole of $e$ is $r_{1}$.
Thus,
\[
EVI_{+} = E_{7} \cap EVIII_{+}, \qquad
EVI_{+}' \cup \{ r_{1} \} = E_{7} \cap EIX_{+}.
\]

\begin{lemm} \label{EVI_{+}}
Set $A(EVI_{+}) = A(E_{8}) \cap EVI_{+}$ and $A(EVI'_{+}) = A(E_{8}) \cap EVI'_{+}$.
Then,
\[
\begin{split}
A(EVI_{+}) &= \big\{ \psi(\gamma_{i}^{0,0}, \gamma_{1}^{a,b}) \ ; \ 1 \leq i \leq 7, a,b = 0,1 \big\} \cup \big\{ \psi(\gamma_{0}^{0,0}, \gamma_{0}^{a,b}) \ ;\ (a,b) \not= (0,0) \big\}, \\
A(EVI_{+}') &= r_{1}A(EVI_{+}) \\ 
&= \big\{ \psi(\gamma_{i}^{0,0}, \gamma_{0}^{a,b}) \ ; \ 1 \leq i \leq 7, a,b = 0,1 \big\} \cup \big\{ \psi(\gamma_{0}^{0,0}, \gamma_{1}^{a,b}) \ ;\ (a,b) \not= (0,0) \big\} 
\end{split}
\]
For every $p \in A(E_{7})$, the orbit $\mathcal{O}_{E_{7}}(p)$ is one of $\{ r_{0} \}, \{ r_{1} \}, EVI_{+}$, and $EVI_{+}'$.
\end{lemm}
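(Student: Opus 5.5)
The approach mirrors the proof of Lemma \ref{E_{8}}, using that the polars are the $E_{7}$-conjugation orbits of involutions, together with $EVI_{+} = E_{7} \cap EVIII_{+}$ and $EVI_{+}' \cup \{ r_{1} \} = E_{7} \cap EIX_{+}$.

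Every $p \in A(E_{7})$ is an involution, so $A(E_{7}) \subset F(s_{e}, E_{7})$. Hence for $p \neq r_{0}$, the orbit $\mathcal{O}_{E_{7}}(p)$ is a polar of $e$ in $E_{7}$, that is, one of $\{ r_{1} \}$, $EVI_{+}$, $EVI_{+}'$. Which one is decided by which $E_{8}$-polar contains $p$. Intersecting with $E_{7}$ gives
\[
A(EVI_{+}) = A(E_{7}) \cap A(EVIII_{+}), \qquad A(EVI_{+}') \cup \{ r_{1} \} = A(E_{7}) \cap A(EIX_{+}).
\]
No new fixed-point dimension counts are needed.

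Next I read off these intersections from Lemma \ref{E_{8}} and the explicit description
\[
A(E_{7}) = \{ \psi(\gamma_{i}^{0,0}, \gamma_{0}^{a,b}), \psi(\gamma_{i}^{0,0}, \gamma_{1}^{a,b}) \}.
\]
The elements of $A(E_{7})$ lying in $A(EVIII_{+})$ are of two kinds:
\begin{itemize}
\item those with $i \geq 1$ and $j = 1$, namely $\psi(\gamma_{i}^{0,0}, \gamma_{1}^{a,b})$;
\item those with $i = j = 0$ and $(a,b) \neq (0,0)$.
\end{itemize}
This is exactly the stated $A(EVI_{+})$.

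The elements lying in $A(EIX_{+})$ are of two kinds:
\begin{itemize}
\item $\psi(\gamma_{i}^{0,0}, \gamma_{0}^{a,b})$ with $i \geq 1$;
\item $\psi(\gamma_{0}^{0,0}, \gamma_{1}^{a,b})$ with arbitrary $(a,b)$.
\end{itemize}
The second family contains $r_{1} = \psi(\gamma_{0}^{0,0}, \gamma_{1}^{0,0})$, the pole. Removing it gives the stated $A(EVI'_{+})$. The identity $A(EVI'_{+}) = r_{1}A(EVI_{+})$ is then a direct check. Since $\psi$ is a homomorphism and $\gamma_{1}^{0,0}\gamma_{1}^{a,b} = \gamma_{0}^{a,b}$, we get
\[
r_{1}\,\psi(\gamma_{i}^{0,0}, \gamma_{1}^{a,b}) = \psi(\gamma_{i}^{0,0}, \gamma_{0}^{a,b}), \qquad r_{1}\,\psi(\gamma_{0}^{0,0}, \gamma_{0}^{a,b}) = \psi(\gamma_{0}^{0,0}, \gamma_{1}^{a,b}).
\]
This matches the two families termwise. (Alternatively, it follows from the fact that multiplication by the central element $r_{1}$ is an isometry of $E_{7}$ exchanging the two $EVI$ polars.)

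The only delicate point is to confirm that every element of $A(E_{7})$ other than $r_{0}$ and $r_{1}$ genuinely lies in one of the two $EVI$ polars rather than being another pole. Here I would use Chen--Nagano's list (Table \ref{polars}): $E_{7}$ has exactly one nontrivial pole, namely the central element $\tau_{\alpha} = r_{1}$. So any other nontrivial involution has a positive-dimensional orbit. Its orbit is then $EVI_{+}$ or $EVI_{+}'$ according to whether it lies in $EVIII_{+}$ or $EIX_{+}$. As a sanity check, each of $A(EVI_{+})$ and $A(EVI'_{+})$ has $28 + 3 = 31$ elements, so together with $r_{0}$ and $r_{1}$ they exhaust all $64$ elements of $A(E_{7})$.
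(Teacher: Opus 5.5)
Your proposal is correct and follows essentially the same route as the paper: you read off $A(EVI_{+}) = A(E_{7}) \cap A(EVIII_{+})$ and $A(EVI'_{+}) \cup \{ r_{1} \} = A(E_{7}) \cap A(EIX_{+})$ directly from Lemma \ref{E_{8}}, using that the nontrivial $E_{7}$-orbits in $A(E_{7})$ are exactly the pole $\{ r_{1} \}$ and the two $EVI$ polars. You also spell out two checks the paper leaves implicit: the termwise verification $r_{1}A(EVI_{+}) = A(EVI'_{+})$ via $\gamma_{1}^{0,0}\gamma_{1}^{a,b} = \gamma_{0}^{a,b}$, and the count $1 + 1 + 31 + 31 = 64 = |A(E_{7})|$.
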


\begin{proof}
The first identity follows from $A(EVI_{+})=A(E_{7})\cap A(EVIII_{+})$.
Moreover, $A(EVI'_{+}) \cup \{ r_{1} \} = A(E_{7}) \cap A(EIX_{+})$.
Therefore, the asserted descriptions follow from Lemma \ref{E_{8}}.
\end{proof}

Note that
\[
EVI_{+} = \mathcal{O}_{E_{7}}(p_{0}), \quad
EVI'_{+} = \mathcal{O}_{E_{7}}(q_{0}).
\]
To determine the remaining $E_{7}$-orbits, we divide the argument into two parts.
We first consider the orbits through the points of $A(EVIII_{+}) \setminus A(E_{7})$, and then those through the points of $A(EIX_{+}) \setminus A(E_{7})$.


For $1 \leq i \leq 3$, the element $p_{2i} \in A(EVIII_{+}) \setminus A(E_{7})$ satisfies
\[
\begin{split}
F^{+}(\mathrm{Ad}(p_{2i})|_{\frak{e}_{7} \cap V}, \frak{e}_{7} \cap L) & \cong \frak{spin}(6) \oplus \frak{spin}(6) \oplus \mathbb{R}, \\
F^{+}(\mathrm{Ad}(p_{2i})|_{\frak{e}_{7} \cap V}, \frak{e}_{7} \cap V) & = F^{+}(\Delta(p_{2i}), (\mathbb{O} \otimes^{+} \mathbb{H}) \oplus^{-} (\mathbb{O} \otimes \mathbb{H}) ) \\
& = (\mathbb{H} \otimes^{+} \mathbb{C}_{i})
\oplus (\mathbb{H}^{\perp} \otimes^{+} \mathbb{C}_{i}^{\perp}) 
\oplus (\mathbb{H} \otimes^{-} \mathbb{C}_{i}) 
\oplus (\mathbb{H}^{\perp} \otimes^{-} \mathbb{C}_{i}^{\perp}),
\end{split}
\]
where $\mathbb{C}_{i} = \mathbb{R} \oplus \mathbb{R}e_{i}$, the subspace $\mathbb{C}_{i}^{\perp}$ is the orthogonal complement of $\mathbb{C}_{i}$ in $\mathbb{H}$, and $\mathbb{H}^{\perp}$ is the orthogonal complement of $\mathbb{H}$ in $\mathbb{O}$.
Hence, $\dim F^{+}(\mathrm{Ad}(p_{2i}), \frak{e}_{7}) = 63$ and $\mathcal{O}_{E_{7}}(p_{2i})$ is isometric to $EV$.
For $1\leq i\leq3$, set
\[
EV_{i} = \mathcal{O}_{E_{7}}(p_{2i}).
\]

\begin{lemm} \label{EV_{i}}
For $1\leq i\leq3$, set $A(EV_{i}) = A(E_{8}) \cap EV_{i}$.
Then,
\[
A(EV_{i}) = \big\{ \psi(\gamma_{k}^{0,0}, \gamma_{2i}^{a,b}), \psi(\gamma_{k}^{0,0}, \gamma_{2i+1}^{a,b}) \ ;\ 1 \leq k \leq 7, a,b = 0,1  \big\}.
\]
For every $p \in A(EVIII_{+}) \setminus A(E_{7})$, the orbit $\mathcal{O}_{E_{7}}(p)$ is one of $EV_{1},EV_{2}$, and $EV_{3}$.
\end{lemm}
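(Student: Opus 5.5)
The plan is to describe each $EV_{i}$ through the fixed-point set of $\theta_{p_{2i}}$ in $E_{7}$, and then to compute $A(E_{8}) \cap EV_{i}$ by elementary moves inside $A(E_{8})$.

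\textbf{Step 1 (the antipodal subgroup acts on $EV_{i}$).} Since $A(E_{8})$ is abelian, every $q \in A(E_{7})$ commutes with $p_{2i}$. Conjugation by $q$ is an element of $E_{7}$ acting on $\mathcal{O}_{E_{7}}(p_{2i})$, and it fixes $p_{2i}$. Hence for each $x \in A(E_{8}) \cap EV_{i}$, the whole coset-type set $\{ \theta_{g}(x) \}$ obtained from elements $g \in E_{7}$ normalizing $A(E_{8})$ stays in $EV_{i}$.

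\textbf{Step 2 (inclusion $\supseteq$).} I would show that each $\psi(\gamma_{k}^{0,0}, \gamma_{2i}^{a,b})$ and $\psi(\gamma_{k}^{0,0}, \gamma_{2i+1}^{a,b})$ is $E_{7}$-conjugate to $p_{2i}$, using explicit Weyl-type elements.
\begin{itemize}
\item To move the first factor $\gamma_{1} \mapsto \gamma_{k}$, I would use $\psi(G_{2} \times \{e\}) \subset E_{7}$. The permutations of the nonzero elements of $A(G_{2})$ induced by $N_{G_{2}}(A(G_{2}))$, namely $GL(3,2)$, act transitively on $\gamma_{1}, \dots, \gamma_{7}$.
\item To move the second factor, I would work in $\psi(\{e\} \times Spin(8)) \cap E_{7}$. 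This contains the centralizer of $SU^{\alpha}(2)$ in the second $Spin(8)$, isomorphic to $Spin(4) \cdot Spin(4)$ up to a torus. Elements of $Spin^{\alpha}$ and $SU^{\bar\alpha}(2)$ realize the sign changes $(a,b)$, and exchange $\gamma_{2i}$ with $\gamma_{2i+1}$. This is the case because $\gamma_{2i}\gamma_{2i+1} = \gamma_{1}$, which relates to $\tau_{\alpha}$.
\end{itemize}
Concretely, $\gamma_{2i}^{a,b}$ and $\gamma_{2i}^{a',b'}$ differ by central factors, and conjugation by suitable torus elements $T_{16}(\dots)$ that lie in $E_{7}$ (those with $\theta_{7} = \theta_{8}$) reproduces the needed sign flips. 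I expect this to be checked from the explicit $\tau$-list in Subsection~\ref{O}.

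\textbf{Step 3 (inclusion $\subseteq$ and the orbit count).} By Lemma~\ref{E_{8}}, $A(EVIII_{+}) \setminus A(E_{7})$ consists of the elements $\psi(\gamma_{k}^{0,0}, \gamma_{j}^{a,b})$ with $k \ge 1$ and $j \in \{2, \dots, 7\}$, since $j \in \{0,1\}$ gives $A(E_{7})$. These split into the three blocks $j \in \{2,3\}, \{4,5\}, \{6,7\}$, and Step 2 shows each block lies in a single orbit $EV_{i}$. It remains to show that the three $EV_{i}$ are distinct, which gives the reverse inclusion. For this I would use the $E_{7}$-invariant $\mathrm{Ad}(p)|_{\frak{e}_{7}}$ combined with the $V$-decomposition computed just before the lemma. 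Distinct orbits with $x \in EV_{i} \cap EV_{j}$ would force $p_{2i}$ and $p_{2j}$ to be $E_{7}$-conjugate. Then $p_{2i} p_{2j}^{-1}$-type invariants would have to match, for example the eigenspace $F^{+}(\mathrm{Ad}(p), \frak{e}_{8} \ominus \frak{e}_{7})$ as a module over $SU^{\alpha}(2)$ and its compatibility with $p_{2i}$. A cleaner route is via centralizers in $E_{8}$: $E_{7}$-conjugate points $p, p'$ have $E_{7}$-conjugate pairs $(p, r_{1})$ and $(p', r_{1})$. So $\dim F^{+}(\mathrm{Ad}(p)\mathrm{Ad}(q), \frak{e}_{8})$ for fixed $q \in A(E_{7})$ central-ish is invariant. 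Choosing $q = \psi(\gamma_{2}^{0,0}, \gamma_{0}^{0,0}) \in A(E_{7})$, which commutes with everything, does not suffice. Instead I would compare $\mathcal{O}_{E_{7}}(p) \cap A(E_{8})$ directly, using that the orbit is a polar-type submanifold whose antipodal intersection is determined by the block structure.

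\textbf{Main obstacle.} The hard part is this separation of $EV_{1}, EV_{2}, EV_{3}$: they are isometric, so dimension counts alone cannot distinguish them. I would first try the intrinsic approach of showing $\mathcal{O}_{E_{7}}(p_{2i})p_{2i}^{-1} = \exp F^{-}(\mathrm{Ad}(p_{2i}), \frak{e}_{7})$. I would then check that no $x = \psi(\gamma_{k}, \gamma_{2j}^{a,b})$ with $j \ne i$ lies there. The test is whether $x p_{2i}$, which has second component in the $\gamma_{2j}\gamma_{2i}$-block, a different index, has the wrong eigenvalue data on $\frak{e}_{7}$ for an element of $\exp F^{-}$.
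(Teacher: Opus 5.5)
\textbf{Gap: Step 3.} Your reduction is correct. $A(EV_{i}) \subset A(EVIII_{+}) \setminus A(E_{7})$, since $gp_{2i}g^{-1} \in E_{7}$ with $g \in E_{7}$ would force $p_{2i} \in E_{7}$. Orbits are equal or disjoint, so the inclusion $\subseteq$ is equivalent to $EV_{1}, EV_{2}, EV_{3}$ being pairwise distinct. But you never prove this distinctness, and the tests you suggest cannot work.

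Anything computed from the pair $(\mathfrak{e}_{7}, \mathrm{Ad}(p)|_{\mathfrak{e}_{7}})$ is the same for all three orbits. Indeed, the three involutions $\mathrm{Ad}(p_{2i})|_{\mathfrak{e}_{7}}$ all have fixed algebra of type $EV$, and $\mathfrak{e}_{7}$ has no outer automorphisms, so they are $E_{7}$-conjugate. Hence no ``eigenvalue data on $\mathfrak{e}_{7}$'' separates them. The missing idea is an invariant that records how $E_{7}$ sits in $E_{8}$.

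The paper's invariant is $\mathrm{Ad}(p)|_{\mathfrak{su}^{\alpha}(2)}$. Since $\mathrm{Ad}(p)$ preserves $\mathfrak{su}^{\alpha}(2)$ and $E_{7}$ centralizes it, $\mathrm{Ad}(gpg^{-1})|_{\mathfrak{su}^{\alpha}(2)} = \mathrm{Ad}(p)|_{\mathfrak{su}^{\alpha}(2)}$ for every $g \in E_{7}$. For $p=\psi(\gamma_{k}^{0,0},\gamma_{j}^{a,b})$ this restriction depends only on the pair $\{\gamma_{j}, \gamma_{j}\gamma_{1}\}$: the first factor, the central signs, and $r_{1}=\tau_{\alpha}$ all act trivially on $\mathfrak{su}^{\alpha}(2)$. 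The pairs $\{\gamma_{2},\gamma_{3}\}$, $\{\gamma_{4},\gamma_{5}\}$, $\{\gamma_{6},\gamma_{7}\}$ induce three different involutions of $\mathfrak{su}^{\alpha}(2)$.

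Your last idea was one step from an equally short proof. If $x = gp_{2i}g^{-1}$ with $g \in E_{7}$, then $xp_{2i}^{-1} = g\,(p_{2i}g^{-1}p_{2i}^{-1}) \in E_{7}$, because $p_{2i}$ normalizes $E_{7}$. Hence $xp_{2i}^{-1} \in A(E_{8}) \cap E_{7} = A(E_{7})$, whose second index is $0$ or $1$. Since $\gamma_{2i}\gamma_{1}=\gamma_{2i+1}$, we have $\gamma_{j}\gamma_{2i} \in \{\gamma_{0},\gamma_{1}\}$ if and only if $j \in \{2i,2i+1\}$, so $x$ lies in the $i$-th block. For $j \neq i$, the obstruction is not that $xp_{2i}$ has wrong eigenvalue data; it is that $xp_{2i} \notin E_{7}$ at all.

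\textbf{Step 2.} Your strategy matches the paper's. You move the first factor inside $\psi(Spin^{0}(8)\times\{e\}) \subset E_{7}$ and the second inside $K = SU^{x_{5}+x_{6}}(2)\times SU^{x_{5}-x_{6}}(2)\times SU^{x_{7}+x_{8}}(2)$. Note that $K$ has Lie algebra $\mathfrak{su}(2)^{3}$, not that of $Spin(4)\cdot Spin(4)$. Two of your justifications fail, however.

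\emph{Torus elements.} They cannot produce the sign changes for $i=1$, because $p_{2}$, $p_{3}$ and all their sign variants lie in $T$ itself.

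\emph{The relation $\gamma_{2i}\gamma_{2i+1}=\gamma_{1}$.} With $r_{1}$ central in $E_{7}$, it only gives $\mathcal{O}_{E_{7}}(p_{2i+1}) = r_{1}\mathcal{O}_{E_{7}}(p_{2i})$, not equality.

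You need non-toral elements of $K$. For example, $\exp \frac{\pi}{4}(e_{12}e_{14}-e_{13}e_{15}) \in SU^{x_{7}+x_{8}}(2)$ sends the second-factor $\gamma_{2}^{0,0}$ to $\gamma_{3}^{1,0}$. Alternatively, as in the paper, get all signs $(a,b)$ by conjugating $\gamma_{1}^{0,0}$ to $\gamma_{k}^{a,b}$ inside all of $Spin^{0}(8)$, not just $G_{2}$. Then use $(\gamma_{0}^{a,b},\gamma_{0}^{a,b}) \in \mathrm{Ker}\,\psi$ to transfer the signs to the second factor.
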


\begin{proof}
Let $ 1 \leq i \not= j \leq 3$.
Then, 
\[
\begin{split}
& \mathrm{Ad}(p_{2i})|_{\frak{su}^{\alpha}(2)} \not= \mathrm{Ad}(p_{2j})|_{\frak{su}^{\alpha}(2)}, \qquad
\mathrm{Ad}(p_{2i})|_{\frak{su}^{\alpha}(2)} = \mathrm{Ad}(p_{2i+1})|_{\frak{su}^{\alpha}(2)}. \\
\end{split}
\]
Since $E_{7}$ is the identity component of $C(SU^\alpha(2),E_8)$, the restriction of $\mathrm{Ad}(p)$ to $\frak{su}^{\alpha}(2)$ is constant along each $E_{7}$-orbit.
It follows that
\[
A(EV_{i}) \subset \{ \psi(\gamma_k^{0,0},\gamma_{2i}^{a,b}), \psi(\gamma_k^{0,0},\gamma_{2i+1}^{a,b}) \ ;\ 1 \leq k \leq 7, \ a,b = 0,1 \}.
\]
In $Spin(8)$, 
\[
F(s_{e}, Spin(8)) = \{ \gamma_{0}^{a,b} \ ;\ a,b = 0,1 \} \sqcup \mathcal{O}_{Spin(8)}(\gamma_{1}^{0,0}),
\]
where $\mathcal{O}_{Spin(8)}(\gamma_{1}^{0,0})$ is isometric to $\tilde{G}_{4}(\mathbb{R}^{8})$, which is the oriented Grassmannian of oriented $4$-planes of $\mathbb{R}^{8}$.
Consequently, $\{ \gamma_{k}^{a,b} \ ;\ 1 \leq k \leq 7, a,b = 0, 1 \} \subset \mathcal{O}_{Spin(8)}(\gamma_{1}^{0,0})$.
Since $\psi(Spin^{0}(8) \times \{ e \}) \subset E_{7}$, it follows that $\{ \psi(\gamma_{k}^{0,0}, \gamma_{2i}^{a,b}) \ ;\ 1 \leq k \leq 7, a,b = 0,1 \}$ is contained in $A(EV_{i})$.
Consider the subgroup $K = SU^{x_{5} + x_{6}}(2) \times SU^{x_{5} - x_{6}}(2) \times SU^{x_{7} + x_{8}}(2) \subset E_{7}$.
For each $1 \leq i \leq 3$, we have 
\[
\big\{ \psi(\gamma_{1}^{0,0}, \gamma_{2i}^{a,b}), \psi(\gamma_{1}^{0,0}, \gamma_{2i+1}^{a,b}) \ ;\ a,b = 0,1 \big\} \subset \mathcal{O}_{K}( \psi(\gamma_{1}^{0,0}, \gamma_{2i}^{0,0})).
\]
It follows that every element in the displayed set belongs to $A(EV_{i})$.
Therefore, the asserted equality holds.
Finally, 
\[
A(EVIII_{+}) \setminus A(E_{7}) = \bigsqcup_{1 \leq i \leq 3}A(EV_{i}),
\]
which completes the proof.
\end{proof}


For any $1 \leq i \leq 3$, the element $q_{2i} \in A(EIX_{+}) \setminus A(E_{7})$ satisfies
\[
\begin{split}
F^{+}(\mathrm{Ad}(q_{2i})|_{\frak{e}_{7} \cap V}, \frak{e}_{7} \cap L) & \cong \frak{spin}(10) \oplus \frak{spin}(2) \oplus \mathbb{R}, \\
F^{+}(\mathrm{Ad}(p_{2i})|_{\frak{e}_{7} \cap V}, \frak{e}_{7} \cap V) & = F^{+}(\Delta(q_{2i}), \mathbb{O} \otimes^{+} \mathbb{H} \oplus \mathbb{O} \otimes^{-} \mathbb{H}) 
= (\mathbb{O} \otimes^{+} \mathbb{C}_{i}^{\perp}) \oplus (\mathbb{O} \otimes^{-} \mathbb{C}_{i}^{\perp}). \\
\end{split}
\]
It follows that $\dim F^{+}(\mathrm{Ad}(q_{2i}), \frak{e}_{7}) = 79$.
Hence, $\mathcal{O}_{E_{7}}(q_{2i})$ is isometric to $EVII$.
For $1\leq i\leq3$, set
\[
EVII_{i} = \mathcal{O}_{E_{7}}(q_{2i}).
\]

\begin{lemm} \label{EVII_{i}}
For $1\leq i\leq3$, set $A(EVII_{i}) = A(E_{8}) \cap EVII_{i}$.
Then,
\[
A(EVII_{i}) = \big\{ \psi(\gamma_{0}^{0,0}, \gamma_{2i}^{a,b}), \psi(\gamma_{0}^{0,0}, \gamma_{2i+1}^{a,b}) \ ; \ a,b = 0, 1 \big\}.
\]
For every $p \in A(EIX_{+}) \setminus A(E_{7})$, the orbit $\mathcal{O}_{E_{7}}(p)$ is one of $EVII_{1}, EVII_{2}$, and $EVII_{3}$.
\end{lemm}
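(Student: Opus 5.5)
The proof will follow the same pattern as Lemma \ref{EV_{i}}: an upper bound for $A(EVII_{i})$ from an invariant of $E_{7}$-orbits, a lower bound by exhibiting conjugating elements in $E_{7}$, and then a disjoint-union count.

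First, determine $A(EIX_{+}) \setminus A(E_{7})$. By Lemma \ref{E_{8}}, $A(EIX_{+})$ consists of the elements $\psi(\gamma_{0}^{0,0}, \gamma_{i}^{a,b})$ and $\psi(\gamma_{i}^{0,0}, \gamma_{0}^{a,b})$ with $1 \leq i \leq 7$. By the description of $A(E_{7}) = A(E_{8}) \cap C(SU^{\alpha}(2), E_{8})$, all $\psi(\gamma_{i}^{0,0}, \gamma_{0}^{a,b})$ and all $\psi(\gamma_{0}^{0,0}, \gamma_{1}^{a,b})$ lie in $A(E_{7})$. Hence
\[
A(EIX_{+}) \setminus A(E_{7}) = \{ \psi(\gamma_{0}^{0,0}, \gamma_{j}^{a,b}) \ ;\ 2 \leq j \leq 7, \ a,b = 0,1 \},
\]
a set of $24$ elements. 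The goal is to show that it splits as the disjoint union of the three sets in the statement, each of order $8$.

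For the upper bound, use the invariant from Lemma \ref{EV_{i}}. Since $E_{7}$ centralizes $\frak{su}^{\alpha}(2)$, the restriction $\mathrm{Ad}(p)|_{\frak{su}^{\alpha}(2)}$ is constant along each $E_{7}$-orbit. Only the second factor $\gamma_{j}^{a,b}$ matters here, because $\frak{su}^{\alpha}(2)$ lies in $\frak{spin}^{1}(8)$ with $\alpha = x_{7} - x_{8}$. The computation in Lemma \ref{EV_{i}} already shows that this restriction separates the index classes $\{2,3\}$, $\{4,5\}$, $\{6,7\}$ and agrees within each class. Hence every element of $A(EVII_{i})$ has second factor with index $2i$ or $2i+1$. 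Moreover, $EVII_{i} \subset EIX_{+}$ and $EVII_{i}$ is disjoint from $A(EVIII_{+})$, so the first factor must be $\gamma_{0}^{0,0}$. This gives $A(EVII_{i}) \subset \{ \psi(\gamma_{0}^{0,0}, \gamma_{2i}^{a,b}), \psi(\gamma_{0}^{0,0}, \gamma_{2i+1}^{a,b}) \}$.

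For the reverse inclusion, I would reuse the subgroup $K = SU^{x_{5}+x_{6}}(2) \times SU^{x_{5}-x_{6}}(2) \times SU^{x_{7}+x_{8}}(2) \subset E_{7}$. Each of these roots is orthogonal to $\alpha$, so $K \subset E_{7}$. The first factor $\gamma_{0}^{0,0}$ is the identity and is untouched, so $K$ acts only on the $Spin^{1}(8)$ factor. The claim to verify is
\[
\{ \psi(\gamma_{0}^{0,0}, \gamma_{2i}^{a,b}), \psi(\gamma_{0}^{0,0}, \gamma_{2i+1}^{a,b}) \ ;\ a,b = 0,1 \} \subset \mathcal{O}_{K}(\psi(\gamma_{0}^{0,0}, \gamma_{2i}^{0,0})) = \mathcal{O}_{K}(q_{2i})\ \text{or a translate thereof}.
\]
This is the same $Spin^{1}(8)$-computation as in Lemma \ref{EV_{i}}, since that argument only conjugated the second factor. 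Concretely, the Weyl-group representatives in $K$ act on the second factor by sign changes and swaps among $x_{3}, x_{4}$ (in $Spin^{1}(8)$ coordinates). Using the identification of the $\tau$'s with the $\gamma_{j}^{a,b}$ from Subsection \ref{O}, these permute the central signs $(a,b)$ and interchange $\gamma_{2i}$ with $\gamma_{2i+1}$.

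One subtlety needs care. Since $q_{2i} = \psi(\gamma_{0}^{0,0}, \gamma_{2i}^{1,0})$, the base point of $EVII_{i}$ is $\gamma_{2i}^{1,0}$ rather than $\gamma_{2i}^{0,0}$. Because $\gamma_{2i}^{1,0} = -\gamma_{2i}^{0,0}$ differs by the central element $\gamma_{0}^{1,0}$ of $Spin^{1}(8)$, conjugation commutes with this translation, so the orbit computation carries over unchanged.

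The main obstacle is making this conjugation claim explicit. It requires checking that suitable Weyl elements of the three $SU(2)$ factors reach all eight elements, including the sign patterns $(a,b)$, not just the four with fixed $(a,b)$. If this fails, I would replace $K$ by a larger subgroup of $\psi(\{e\} \times Spin^{1}(8)) \cap E_{7} \cong Spin(6)\cdot Spin(2)$-type subgroups, and argue via the orbit of $\gamma_{2i}$ in the relevant Grassmannian there.

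Finally, the three sets obtained are pairwise disjoint and their union is the $24$-element set above. This shows that every $p \in A(EIX_{+}) \setminus A(E_{7})$ lies in exactly one of $EVII_{1}$, $EVII_{2}$, $EVII_{3}$, which completes the proof.
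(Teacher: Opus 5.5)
Your proposal is correct and follows the paper's proof. The upper bound comes from the constancy of $\mathrm{Ad}(p)|_{\frak{su}^{\alpha}(2)}$ along $E_{7}$-orbits; you also make explicit, via $EVII_{i} \subset EIX_{+}$, why the first factor must be $\gamma_{0}^{0,0}$. The reverse inclusion uses the same subgroup $K$, and the conclusion follows from partitioning the $24$-element set $A(EIX_{+}) \setminus A(E_{7})$.

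The paper also only asserts the $K$-orbit inclusion. Your Weyl-group sketch literally applies only for $i=1$, since only $\gamma_{1}^{a,b},\gamma_{2}^{a,b},\gamma_{3}^{a,b}$ lie in the standard maximal torus. For $i=2,3$ your fallback is the right argument. Each $\gamma_{j}^{a,b}$ with $j \geq 2$ is a product of two oriented $2$-planes of the second factor, one in $\mathrm{span}\{e_{0},\dots,e_{3}\}$ and one in $\mathrm{span}\{e_{4},\dots,e_{7}\}$. The $SO(4)$-action on the first block together with the $SU^{x_{7}+x_{8}}(2)$-action on the second block already produces all eight elements.
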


\begin{proof}
As in the proof of Lemma \ref{EV_{i}}, $A(EVII_{i})$ is contained in the set displayed in the statement.
For the reverse inclusion, for each $1 \leq i \leq 3$, we have 
\[
\big\{ \psi(\gamma_{0}^{0,0}, \gamma_{2i}^{a,b}), \psi(\gamma_{0}^{0,0}, \gamma_{2i+1}^{a,b}) \ ;\ a,b = 0,1 \big\} \subset \mathcal{O}_{K}(\psi( \gamma_{0}^{0,0}, \gamma_{2i}^{0,0})),
\]
where $K$ is a subgroup of $E_{7}$ defined in Lemma \ref{EV_{i}}.
Thus, the set displayed in the statement is contained in $A(EVII_{i})$.
Together with the first inclusion, this proves the equality.
Finally,
\[
A(EIX_{+}) \setminus A(E_{7}) = \bigsqcup_{1 \leq i \leq 3}A(EVII_{i}),
\]
which completes the proof.
\end{proof}

Lemmas \ref{EVI_{+}}, \ref{EV_{i}}, and \ref{EVII_{i}} yield the following classification.

\begin{lemm} \label{E_{7}}
For every $p \in A(E_{8})$, the orbit $\mathcal{O}_{E_{7}}(p)$ is one of
\[
\{ r_{0} \}, \quad 
\{ r_{1} \}, \quad
EV_{i}, \quad
EVI_{+}, \quad
EVI_{+}', \quad
EVII_{i}
\qquad (1 \leq i \leq 3),
\]
where
\[
EV_{i} = \mathcal{O}_{E_{7}}(p_{2i}), \quad
EVI_{+} = \mathcal{O}_{E_{7}}(p_{0}), \quad
EVI_{+}' = \mathcal{O}_{E_{7}}(q_{0}), \quad
EVII_{i} = \mathcal{O}_{E_{7}}(q_{2i}).
\]
Furthermore,
\[
EVI_{+} \cup \bigcup_{i=1}^{3}EV_{i} \subset EVIII_{+}, \qquad
\{ r_{1} \} \cup EVI_{+}' \cup \bigcup_{i=1}^{3}EVII_{i} \subset EIX_{+}.
\]
\end{lemm}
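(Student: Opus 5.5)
The plan is to assemble Lemma \ref{E_{7}} directly from Lemmas \ref{E_{8}}, \ref{EVI_{+}}, \ref{EV_{i}}, and \ref{EVII_{i}}, organised by the partition of $A(E_{8})$ supplied by Lemma \ref{E_{8}}. By that lemma,
\[
A(E_{8}) = \{ r_{0} \} \sqcup A(EVIII_{+}) \sqcup A(EIX_{+}),
\]
since every non-identity element of $A(E_{8})$ is an involution and hence lies in a polar of $e$. I would refine this by intersecting with $A(E_{7}) = A(E_{8}) \cap C(SU^{\alpha}(2), E_{8})$. This gives four pieces:
\[
\{ r_{0} \},\quad
A(E_{7}) \cap A(EVIII_{+}),\quad
A(E_{7}) \cap A(EIX_{+}),\quad
A(E_{8}) \setminus A(E_{7}).
\]
The last piece further splits into $A(EVIII_{+}) \setminus A(E_{7})$ and $A(EIX_{+}) \setminus A(E_{7})$.

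For $p \in A(E_{7})$, Lemma \ref{EVI_{+}} already says $\mathcal{O}_{E_{7}}(p)$ is one of $\{r_{0}\}$, $\{r_{1}\}$, $EVI_{+}$, $EVI'_{+}$. Its proof also identifies $A(EVI_{+}) = A(E_{7}) \cap A(EVIII_{+})$ and $A(EVI'_{+}) \cup \{r_{1}\} = A(E_{7}) \cap A(EIX_{+})$. For $p \in A(EVIII_{+}) \setminus A(E_{7})$, Lemma \ref{EV_{i}} gives $\mathcal{O}_{E_{7}}(p) = EV_{i}$ for some $i$. For $p \in A(EIX_{+}) \setminus A(E_{7})$, Lemma \ref{EVII_{i}} gives $\mathcal{O}_{E_{7}}(p) = EVII_{i}$. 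Since these cases exhaust $A(E_{8})$, the list in the statement is complete. The identifications $EV_{i} = \mathcal{O}_{E_{7}}(p_{2i})$ and so on are the definitions, or, for $EVI_{+}$ and $EVI'_{+}$, the remark after Lemma \ref{EVI_{+}}.

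For the inclusions, the key observation is that $E_{7} \subset E_{8}$. Hence for any $p$ we have $\mathcal{O}_{E_{7}}(p) \subset \mathcal{O}_{E_{8}}(p)$. Since $p_{2i}, p_{0} \in A(EVIII_{+})$, Lemma \ref{E_{8}} gives $\mathcal{O}_{E_{8}}(p_{2i}) = \mathcal{O}_{E_{8}}(p_{0}) = EVIII_{+}$, and therefore $EV_{i}, EVI_{+} \subset EVIII_{+}$. Likewise, $r_{1}, q_{0}, q_{2i} \in A(EIX_{+})$ gives $\{r_{1}\}, EVI'_{+}, EVII_{i} \subset EIX_{+}$. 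Membership of these elements in the right sets is read off from the explicit descriptions in Lemma \ref{E_{8}}. For example, $r_{1} = \psi(\gamma_{0}^{0,0},\gamma_{1}^{0,0})$ has exactly one index zero, so it lies in $A(EIX_{+})$. Similarly $p_{2i} = \psi(\gamma_{1}^{0,0},\gamma_{2i}^{0,0})$ has both indices nonzero, so it lies in $A(EVIII_{+})$.

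The only real point of care is the bookkeeping that the pieces cover $A(E_{8})$. This is provided by the disjoint-union statements at the ends of the proofs of Lemmas \ref{EV_{i}} and \ref{EVII_{i}}, together with the fact that $r_{1}$ is the nontrivial pole of $E_{7}$, which accounts for the singleton orbit. I expect no substantive obstacle beyond checking these memberships.
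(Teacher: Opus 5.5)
Your proposal is correct and follows essentially the same route as the paper. The paper obtains this lemma by combining the lemmas on $EVI_{+}$, $EV_{i}$, and $EVII_{i}$ over the partition $A(E_{8}) = \{r_{0}\} \sqcup A(EVIII_{+}) \sqcup A(EIX_{+})$; the inclusions into $EVIII_{+}$ and $EIX_{+}$ come from $E_{7} \subset E_{8}$, which is equivalent to the set-up $EVI_{+} = E_{7} \cap EVIII_{+}$ and $EVI'_{+} \cup \{r_{1}\} = E_{7} \cap EIX_{+}$ made before the lemma on $EVI_{+}$.
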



\subsection{The $E_{6}$-orbits} \label{E_{6}-orbit}

We consider $\mathcal{O}_{E_{6}}(p)$ for $p \in A(E_{8})$.
We first determine the intersection $A(E_{6}) = A(E_{8}) \cap E_{6}$.
The subgroup
\[
A(E_{8}) \cap C(SU^{\alpha, \beta}(3), E_{8}) = \big\{ \psi(\gamma_{i}^{0,0}, \gamma_{0}^{a,b}) \ ; \ 0 \leq i \leq 7, a,b = 0,1 \big\}
\]
is contained in the subgroup $\phi(Spin^{0}(8)) \cong Spin(8)$ of $E_{6}$.
Therefore, we have
\[
A(E_{6}) = A(E_{8}) \cap C(SU^{\alpha, \beta}(3), E_{8}).
\]

According to the classification of polars \cite{Chen-Nagano1}, $E_{6}$ has two polars, isometric to $EII$ and $EIII$, respectively.
Let $EII_{+}$ and $EIII_{+}$ denote the polars of $e$ isometric to $EII$ and $EIII$, respectively.

\begin{lemm} \label{EII_{+}EIII_{+}}
Set $A(EII_{+}) = A(E_{8}) \cap EII_{+}$ and $A(EIII_{+}) = A(E_{8}) \cap EIII_{+}$.
Then,
\[
\begin{split}
& A(EII_{+}) = \big\{ \psi(\gamma_{k}^{0,0}, \gamma_{0}^{a,b}) \ ;\ 1 \leq k \leq 7, a,b = 0,1 \big\}, \\
& A(EIII_{+}) = \big\{ \psi(\gamma_{0}^{0,0}, \gamma_{0}^{a,b}) \ ;\ (a,b) \not= (0,0) \big\}.
\end{split}
\]
For every $p \in A(E_{6})$, the orbit $\mathcal{O}_{E_{6}}(p)$ is one of $\{ r_{0} \}, EII_{+}$, and $EIII_{+}$.
\end{lemm}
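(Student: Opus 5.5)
Since every element of $A(E_{6}) = \{\psi(\gamma_{k}^{0,0},\gamma_{0}^{a,b})\}$ is an involution of $E_{6}$, we have $A(E_{6}) \subset F(s_{e}, E_{6})$. Hence for $p \in A(E_{6}) \setminus \{r_{0}\}$ the orbit $\mathcal{O}_{E_{6}}(p)$ is either a polar or a pole of $e$. By Table \ref{polars}, $E_{6}$ has no nontrivial pole; this is consistent with $C(E_{6}) \cong \mathbb{Z}_{3}$ containing no involution. Thus each nontrivial $p$ lies in $EII_{+}$ or $EIII_{+}$, and it remains to sort the 31 nontrivial elements. As in Lemma \ref{E_{8}}, we separate the two polars by the dimension of the fixed subalgebra. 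By Table \ref{involutions}, $EII_{+}$ corresponds to $\dim F^{+}(\mathrm{Ad}(p),\frak{e}_{6}) = 38$ and $EIII_{+}$ to dimension $46$.

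To compute these dimensions we use the decomposition $\frak{e}_{6} = \mathbb{R}t(0,\dots,0,1,1,1) \oplus \frak{spin}^{\alpha,\beta}(10) \oplus (\mathbb{O}\otimes^{+}\mathbb{C}) \oplus (\mathbb{O}\otimes^{-}\mathbb{C})$. Write $p = \psi(a,b)$ with $a=\gamma_{k}^{0,0}\in Spin^{0}(8)$ and $b=\gamma_{0}^{a,b}\in C(Spin^{1}(8))$.

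\textbf{Case $k\ge1$, so $p=\psi(\gamma_{k}^{0,0},\gamma_{0}^{a,b})$.} The element $b$ is central in $Spin^{1}(8)$ and acts on $\mathbb{C}\subset\mathbb{O}^{\pm}$ by the scalar $\pm1$ read off from the table in Lemma \ref{E_{8}}. The element $a$ has $4$-dimensional $\pm1$-eigenspaces on $\mathbb{O}^{\pm}$. Hence the $V$-part of the fixed space has dimension $4\cdot2+4\cdot2=16$. On $\frak{spin}^{\alpha,\beta}(10)$ (built on $e_{0},\dots,e_{9}$) and on the torus line, $p$ acts as conjugation by a product of four Clifford generators among $e_{0},\dots,e_{7}$. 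This gives fixed algebra $\frak{spin}(4)\oplus\frak{spin}(6)$ of dimension $6+15=21$, plus the $1$-dimensional torus line. The total is $16+21+1=38$, so $p$ lies in $EII_{+}$.

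\textbf{Case $k=0$, so $p=\psi(\gamma_{0}^{0,0},\gamma_{0}^{a,b})$ with $(a,b)\ne(0,0)$.} Now $p$ acts trivially on $L\cap\frak{e}_{6}$, contributing $45+1=46$. On $V\cap\frak{e}_{6}$ it acts by the scalars determined by $(a,b)$. If $(a,b)=(1,0)$, both sign factors are $-1$, so the $V$-part contributes $0$ and the total is $46$. If $(a,b)=(0,1)$ or $(1,1)$, the table gives sign $+1$ on one summand and $-1$ on the other. The total is then $46+16=62$, which is not in Table \ref{involutions}.

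This contradiction in the last subcase shows that the scalar bookkeeping needs care, and this is the main obstacle. The resolution is that $\psi(\gamma_{0}^{0,0},\gamma_{0}^{0,1})$ and $\psi(\gamma_{0}^{0,0},\gamma_{0}^{1,1})$ do not act trivially on $\frak{spin}^{\alpha,\beta}(10)$. Under $\psi$, the element $\gamma_{0}^{0,1}=e_{8}\cdots e_{15}$ equals $e_{0}\cdots e_{7}$ modulo $\mathrm{Ker}\,\phi$. Conjugation by it acts as $-1$ on the mixed bivectors $e_{i}e_{j}$ with $i\le7<j$. Inside $\frak{spin}(10)$ there are $16$ such bivectors, so the fixed subalgebra is $\frak{spin}(8)\oplus\frak{spin}(2)$ of dimension $29$. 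Counting the torus line ($1$), $\frak{spin}(8)\oplus\frak{spin}(2)$ ($29$), and the one $8$-dimensional $V$-summand with sign $+1$ ($16$ real dimensions from $\mathbb{O}\otimes\mathbb{C}$) gives $46$. Likewise $(1,0)$ gives $46$. Thus all three elements lie in $EIII_{+}$.

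I would carry out exactly this check, tracking carefully how $\mathrm{Ker}\,\psi$ identifies central elements of the two $Spin(8)$ factors, using the same dimension table as in Lemma \ref{E_{8}}. Since $\mathcal{O}_{E_{6}}(p)$ is the whole polar containing $p$, the description of $A(EII_{+})$ and $A(EIII_{+})$ follows.
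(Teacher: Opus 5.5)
Your argument is correct, but it takes a different route from the paper.

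The paper computes $\dim F^{+}(\mathrm{Ad}(p),\frak{e}_{6})$ only for two representatives: $p_{0}\in EVIII_{+}$, which gives $46$, and $q_{0}\in EIX_{+}$, which gives $38$. Since $F(s_{e},E_{6})=E_{6}\cap F(s_{e},E_{8})$ and each polar of $E_{6}$ is connected, this identifies $EIII_{+}=E_{6}\cap EVIII_{+}$ and $EII_{+}=E_{6}\cap EIX_{+}$. The sets $A(EII_{+})$ and $A(EIII_{+})$ are then read off by intersecting $A(E_{6})$ with the lists already obtained in Lemma \ref{E_{8}}.

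You instead recompute the fixed-subalgebra dimension inside $\frak{e}_{6}$ for all $31$ nontrivial elements, grouped into the families $k\ge 1$ and $k=0$. This is self-contained and does not need Lemma \ref{E_{8}}. The cost is that you must redo the central-element bookkeeping that Lemma \ref{E_{8}} had already handled, which is exactly where your first pass went wrong. The paper's route is shorter, and as a by-product it gives the containments $EIII_{+}\subset EVIII_{+}$ and $EII_{+}\subset EIX_{+}$ used in the inclusion diagram of Theorem \ref{main-2}.

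In a clean write-up, make two changes.
\begin{itemize}
\item Drop the false intermediate claim that $p$ acts trivially on $L\cap\frak{e}_{6}$ when $k=0$, and present only the corrected count.
\item Note that your Case $k\ge 1$ assertion about the $L$-action also relies on the kernel identification when $(a,b)\in\{(0,1),(1,1)\}$. Since $(\gamma_{0}^{a,b},\gamma_{0}^{a,b})\in\mathrm{Ker}\,\psi$, you have $\psi(\gamma_{k}^{0,0},\gamma_{0}^{a,b})=\psi(\gamma_{k}^{a,b},\gamma_{0}^{0,0})$. Here $\gamma_{k}^{a,b}$ is, up to sign, the product of the four generators among $e_{0},\dots,e_{7}$ complementary to those in $\gamma_{k}^{0,0}$. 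So the fixed algebra is still $\frak{spin}(4)\oplus\frak{spin}(6)$ and the total $38$ is unchanged.
\end{itemize}
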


\begin{proof}
The two intersections $E_{6} \cap EVIII_{+}$ and $E_{6} \cap EIX_{+}$ are the polars $EII_{+}$ and $EIII_{+}$ in some order.
The element $p_{0} \in A(E_{6}) \cap EVIII_{+}$ satisfies 
\[
\begin{split}
F^{+}(\mathrm{Ad}(p_{0})|_{\frak{e}_{6} \cap L}, \frak{e}_{6} \cap L) &\cong \mathbb{R} \oplus \frak{spin}(10), \\
F^{+}(\mathrm{Ad}(p_{0})|_{\frak{e}_{6} \cap V}, \frak{e}_{6} \cap V) &= \{ 0 \}.
\end{split}
\]
Hence, $\dim F^{+}(\mathrm{Ad}(p_{0}), \frak{e}_{6}) = 46$.
It follows that $\mathcal{O}_{E_{6}}(p_{0})$ is isometric to $EIII$.
Next, the element $q_{0} \in A(E_{6}) \cap EIX_{+}$ satisfies
\[
\begin{split}
F^{+}(\mathrm{Ad}(q_{0})|_{\frak{e}_{6} \cap L}, \frak{e}_{6} \cap L) &= \mathbb{R} \oplus \frak{spin}(6) \oplus \frak{spin}(4), \\
F^{+}(\mathrm{Ad}(q_{0})|_{\frak{e}_{6} \cap V}, \frak{e}_{6} \cap V) &= (\mathbb{H} \otimes^{+} \mathbb{C}) \oplus (\mathbb{H} \otimes^{-} \mathbb{C}).
\end{split}
\]
Thus, $\dim F^{+}(\mathrm{Ad}(q_{0}), \frak{e}_{6}) = 38$.
It follows that $\mathcal{O}_{E_{6}}(q_{0})$ is isometric to $EII$.
Consequently,
\[
EII_{+} = E_{6} \cap EIX_{+}, \qquad
EIII_{+} = E_{6} \cap EVIII_{+}.
\]
By Lemma \ref{E_{8}},
\[
\begin{split}
& A(EII_{+}) = A(E_{6}) \cap EIX_{+} = \{ \psi(\gamma_{k}^{0,0}, \gamma_{0}^{a,b}) \ ;\ 1 \leq k \leq 7, a,b = 0,1 \}, \\
& A(EIII_{+}) = A(E_{6}) \cap EVIII_{+} = \{ \psi(\gamma_{0}^{0,0}, \gamma_{0}^{a,b}) \ ;\ (a,b) \not= (0,0) \}.
\end{split}
\]
This proves the first assertion.
Finally, $A(E_{6}) = \{ r_{0} \} \sqcup A(EII_{+}) \sqcup A(EIII_{+})$, which completes the proof.
\end{proof}

In particular,
\[
EII_{+} = \mathcal{O}_{E_{6}}(q_{0}), \qquad
EIII_{+} = \mathcal{O}_{E_{6}}(p_{0}).
\]
As in Subsection 4.3, we divide the analysis of the remaining $E_{6}$-orbits into two parts.
We first consider the orbits through the points of $A(EVIII_{+}) \setminus A(E_{6})$, and then those through the points of $A(EIX_{+}) \setminus A(E_{6})$.


For $1 \leq i \leq 3$, the element $p_{i} \in A(EVIII_{+}) \setminus A(E_{6})$ satisfies
\[
\begin{split}
F^{+}(\mathrm{Ad}(p_{i})|_{\frak{e}_{6} \cap L}, \frak{e}_{6} \cap L) &= \mathbb{R} \oplus \frak{spin}(6) \oplus \frak{spin}(4), \\
F^{+}(\mathrm{Ad}(p_{i})|_{\frak{e}_{6} \cap V}, \frak{e}_{6} \cap V) &= (\mathbb{H} \otimes^{+} \mathbb{C}) \oplus (\mathbb{H} \otimes^{-} \mathbb{C}).
\end{split}
\]
Hence, $\dim F^{+}(\mathrm{Ad}(p_{i}), \frak{e}_{6}) = 38$.
It follows that $\mathcal{O}_{E_{6}}(p_{i})$ is isometric to $EII$.
For $1 \leq i \leq 3$, set
\[
EII_{i} = \mathcal{O}_{E_{6}}(p_{i}) \quad (1 \leq i \leq 3).
\]
For $4 \leq j \leq 7$, the element $p_{j} \in A(EVIII_{+}) \setminus A(E_{6})$ satisfies
\[
\begin{split}
F^{+}(\mathrm{Ad}(p_{j})|_{\frak{e}_{6} \cap L}, \frak{e}_{6} \cap L) &= \frak{spin}(5) \oplus \frak{spin}(5), \\
F^{+}(\mathrm{Ad}(p_{j})|_{\frak{e}_{6} \cap V}, \frak{e}_{6} \cap V) &= (\mathbb{H} \otimes^{+} \mathbb{R}e_{0}) \oplus (\mathbb{H}^{\perp} \otimes^{+} \mathbb{R}e_{1}) \oplus (\mathbb{H} \otimes^{-} \mathbb{R}e_{0}) \oplus (\mathbb{H}^{\perp} \otimes^{-} \mathbb{R}e_{1}).
\end{split}
\]
Therefore, $\dim F^{+}(\mathrm{Ad}(p_{j}), \frak{e}_{6}) = 36$.
Thus, $\mathcal{O}_{E_{6}}(p_{j})$ is isometric to $EI$.
For $4 \leq j \leq 7$, set
\[
EI_{j} = \mathcal{O}_{E_{6}}(p_{j}) \quad (4 \leq j \leq 7).
\]

\begin{lemm} \label{EII_{i}EI_{j}}
For $1 \leq i \leq 3$ and $4 \leq j \leq 7$, set $A(EII_{i}) = A(E_{8}) \cap EII_{i}$ and $A(EI_{j}) = A(E_{8}) \cap EI_{j}$.
Then,
\[
\begin{split}
& A(EII_{i}) = \big\{ \psi(\gamma_{k}^{0,0}, \gamma_{i}^{a,b}) \ ;\ 1 \leq k \leq 7, a,b = 0,1 \big\}, \\
& A(EI_{j}) = \big\{ \psi(\gamma_{k}^{0,0}, \gamma_{j}^{a,b}) \ ;\ 1 \leq k \leq 7, a,b = 0,1 \big\}.
\end{split}
\]
For every $p \in A(EVIII_{+}) \setminus A(E_{6})$, the orbit $\mathcal{O}_{E_{6}}(p)$ is one of $EII_{i} \ (1 \leq i \leq 3)$ and $EI_{j} \ (4 \leq j \leq 7)$.
\end{lemm}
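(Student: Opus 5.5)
We follow the pattern of Lemmas \ref{EV_{i}} and \ref{EVII_{i}}: an invariant forces each orbit into a small candidate set (upper inclusion), and explicit subgroups of $E_{6}$ move elements around inside that set (lower inclusion). First we note that $A(EVIII_{+}) \setminus A(E_{6}) = \{ \psi(\gamma_{k}^{0,0}, \gamma_{j}^{a,b}) \ ;\ 1 \leq k,j \leq 7, a,b = 0,1 \}$. This follows from Lemma \ref{E_{8}} and the description of $A(E_{6})$ above. The union of the seven candidate sets in the statement is exactly this set, and these candidate sets are pairwise disjoint.

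\emph{Upper inclusion.} Since $E_{6}$ is the identity component of $C(SU^{\alpha,\beta}(3), E_{8})$, the restriction $\mathrm{Ad}(p)|_{\frak{su}^{\alpha,\beta}(3)}$ is constant along each $E_{6}$-orbit. The group $SU^{\alpha,\beta}(3)$ lies in $\psi(\{e\} \times Spin(8))$, and its Lie algebra sits inside $\frak{spin}^{1}(8)$ spanned by roots in $x_{6},x_{7},x_{8}$. Hence this restriction depends only on the second factor $\gamma_{j}^{a,b}$. We will check directly that, for $1 \leq j \leq 7$, the restrictions of $\mathrm{Ad}(\psi(\gamma_{k}^{0,0}, \gamma_{j}^{a,b}))$ are pairwise different for different $j$. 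The plan is to compute the action of $\gamma_{j}^{a,b}$ on $e_{10}, \dots, e_{15}$, equivalently on the roots $\alpha, \beta, \alpha+\beta$, using the Clifford expressions from Subsection \ref{O}. For a fixed $j$, the sign choices $(a,b)$ differ by central elements $\gamma_{0}^{a,b}$, whose images act trivially on $L$. Their action on the $V$-part of $\frak{su}^{\alpha,\beta}(3)$ is zero, because $\frak{su}^{\alpha,\beta}(3) \subset L$. So the restriction is independent of $(a,b)$. This yields $A(\mathcal{O}_{E_{6}}(p_{j})) \subset \{ \psi(\gamma_{k}^{0,0}, \gamma_{j}^{a,b}) \}$. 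If the restrictions to $\frak{su}^{\alpha,\beta}(3)$ fail to separate some $j$'s, for example among $4 \leq j \leq 7$, we fall back on the orbit type. Computed above, $EII$ occurs for $j \leq 3$ and $EI$ for $j \geq 4$, which separates the two groups. To separate within a group we would also use the restriction to the centralizer of $SU^{\alpha,\beta}(3)$ in $\psi(\{e\}\times Spin(8))$.

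\emph{Lower inclusion.} First, $\psi(Spin^{0}(8) \times \{e\}) \subset E_{6}$ because it centralizes the second factor. Every $\gamma_{k}^{0,0}$ with $1 \leq k \leq 7$ lies in $\mathcal{O}_{Spin(8)}(\gamma_{1}^{0,0})$, as in the proof of Lemma \ref{EV_{i}}. Hence $\psi(\gamma_{k}^{0,0}, \gamma_{j}^{a,b}) \in \mathcal{O}_{E_{6}}(\psi(\gamma_{1}^{0,0}, \gamma_{j}^{a,b}))$ for all $k$. It remains to change the signs $(a,b)$ while keeping $j$ fixed. For this we use a subgroup $K' \subset E_{6}$ generated by root $SU(2)$'s with roots in $\Sigma_{\alpha,0} \cap \Sigma_{\beta,0}$ of the form $\pm x_{l} \pm x_{m}$ with $l \leq 5$ and $m \geq 5$, for instance $SU^{x_{4}\pm x_{5}}(2)$ and $SU^{x_{5}+x_{6}+\cdots}$-type roots. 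The choice is made so that the $\tau$'s of these roots (listed at the end of Subsection \ref{O}) multiply $\gamma_{j}$ into $\gamma_{j}^{a,b}$. Concretely, conjugating $\psi(\gamma_{1}^{0,0}, \gamma_{j}^{0,0})$ by $\exp(\frac{\pi}{2} X)$ for suitable $X \in \frak{r}_{\delta}$ multiplies it by $\tau_{\delta}$ whenever $\mathrm{Ad}(p)X = -X$. We will show that $\{ \psi(\gamma_{1}^{0,0}, \gamma_{j}^{a,b}) \ ;\ a,b \} \subset \mathcal{O}_{K'}(\psi(\gamma_{1}^{0,0}, \gamma_{j}^{0,0}))$. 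If only $\tau$'s inside $\frak{spin}^{1}(8)$ are used, they lie in the second factor and modify $(a,b)$ together with possibly $j$. The root $x_{5}\pm x_{6}$ is excluded, since it is not orthogonal to $\beta$. So we will instead use spinor roots $\frac{1}{2}\sum \epsilon_{i}x_{i}$ with $\epsilon_{6}=\epsilon_{7}=\epsilon_{8}$, whose $SU(2)$ mixes $V$ with $L$, and compute the resulting $\tau$ in $\psi(Spin(8)\times Spin(8))$.

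The final step is counting: the sets $A(EII_{i})$ and $A(EI_{j})$ partition $A(EVIII_{+}) \setminus A(E_{6})$, which gives the last assertion. I expect the main obstacle to be the sign-changing step, namely finding explicit elements of $E_{6}$ that realize $\gamma_{j}^{0,0} \mapsto \gamma_{j}^{a,b}$ for all four $(a,b)$. Separating $j$'s of the same type in the upper inclusion may also require a more refined invariant than the one used in Lemma \ref{EV_{i}}.
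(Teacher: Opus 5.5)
\paragraph{Main gap: the lower inclusion.} There is a genuine gap here. You leave the step $\psi(\gamma_{1}^{0,0},\gamma_{j}^{0,0}) \mapsto \psi(\gamma_{1}^{0,0},\gamma_{j}^{a,b})$ open. The detour through spinor-root subgroups is not carried out, and it cannot work as you describe it. For $\delta=\frac12\sum\epsilon_i x_i$, both preimages of $\tau_\delta$ in $Spin(16)$ act on $\mathbb{R}^{16}$ as rotations by $\pm\pi/2$ in every plane $\mathrm{span}\{e_{2i-2},e_{2i-1}\}$. Preimages of elements of $A(E_{8})$, by contrast, act diagonally by $\pm1$. So $\tau_\delta\notin A(E_8)$, and multiplying $p$ by it leaves $A(E_8)$.

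No construction of this kind is needed. The missing observation is that signs pass between the two factors through $\mathrm{Ker}\,\psi=\{(\gamma_0^{a,b},\gamma_0^{a,b})\}$. Since $\gamma_k^{a,b}=\gamma_0^{a,b}\gamma_k^{0,0}$ and $(\gamma_0^{a,b})^2=\gamma_0^{0,0}$,
\[
\psi(\gamma_{k}^{a,b},\gamma_{j}^{0,0})=\psi(\gamma_{0}^{a,b},\gamma_{0}^{a,b})\,\psi(\gamma_{k}^{0,0},\gamma_{0}^{a,b}\gamma_{j}^{0,0})=\psi(\gamma_{k}^{0,0},\gamma_{j}^{a,b}).
\]
Moreover, $\mathcal{O}_{Spin(8)}(\gamma_1^{0,0})\cong\tilde{G}_4(\mathbb{R}^8)$ contains $\gamma_k^{a,b}$ for all $1\le k\le 7$ and all $(a,b)$, not only for $(a,b)=(0,0)$. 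Each of these is $\pm$ a product of four distinct $e_i$, and $SO(8)$ can reverse the orientation of a $4$-plane. Now conjugate $p_j$ by $\psi(g,e)\in\psi(Spin^0(8)\times\{e\})\subset E_6$, where $g\gamma_1^{0,0}g^{-1}=\gamma_k^{a,b}$. This gives $\psi(\gamma_k^{a,b},\gamma_j^{0,0})=\psi(\gamma_k^{0,0},\gamma_j^{a,b})$, so every such element lies in the orbit.

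This is exactly the paper's lower inclusion for $A(EI_j)$. It is also the step already used in Lemma \ref{EV_{i}}, which you cite but restrict to $(a,b)=(0,0)$.

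\paragraph{The $EII_i$ case.} The paper takes a shortcut here. The element $r_i$ commutes with $E_6$, so $EII_i=r_iEII_+$, and $A(EII_i)=r_iA(EII_+)$ follows from Lemma \ref{EII_{+}EIII_{+}}. Your uniform treatment of all seven $j$ is equally valid once the identity above is in place.

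\paragraph{Upper inclusion.} This part matches the paper, but drop the fallback.
\begin{itemize}
\item The fallback is invalid. Restricting $\mathrm{Ad}(p)$ to a subalgebra of $\frak{e}_6$, such as the centralizer of $\frak{su}^{\alpha,\beta}(3)$ in the second $\frak{spin}(8)$, does not give a quantity that is constant on $E_6$-orbits.
\item The fallback is also unnecessary. $\frak{su}^{\alpha,\beta}(3)$ is the stabilizer of $e_1$ in $\frak{g}_2'$, acting on $\mathrm{span}\{e_2,\dots,e_7\}\cong\mathbb{C}^3$ with complex structure $L_{e_1}$. On this space $\gamma_1,\gamma_2,\gamma_3$ act as the non-central elements $\mathrm{d}(1,-1,-1)$, $\mathrm{d}(-1,1,-1)$, $\mathrm{d}(-1,-1,1)$ of $SU(3)$. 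The elements $\gamma_4,\dots,\gamma_7$ send $e_1\mapsto -e_1$ and therefore act antilinearly. Hence every $\gamma_m$ with $m\neq 0$ acts nontrivially. Since $\gamma_j\gamma_{j'}=\gamma_m$ with $m\neq0$ whenever $j\neq j'$, the seven restrictions are pairwise distinct.
\item A minor correction: $\psi(e,\gamma_0^{a,b})$ acts trivially on $\frak{spin}^1(8)\supset\frak{su}^{\alpha,\beta}(3)$, which is all you need. It does not act trivially on all of $L$ when $b=1$; there it acts by $-1$ on $\mathrm{span}\{e_re_s \,;\, r\le 7<s\}$.
\end{itemize}
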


\begin{proof}
Since $r_{1}$ and $r_{2}$ are the nontrivial elements of the centers of $E_{7}^{\alpha}$ and $E_{7}^{\beta}$ and $r_{3} = r_{1}r_{2}$, each element of $E_{6}$ commutes with $r_{i}\ (1 \leq i \leq 3)$.
Since $EII_{i} = r_{i} EII_{+}$ for any $1 \leq i \leq 3$, we have
\[
A(EII_{i}) = r_{i}A(EII_{+}) = \{ \psi(\gamma_{k}^{0,0}, \gamma_{i}^{a,b}) \ ;\ 1 \leq k \leq 7, a,b = 0,1 \}.
\]
We next determine $A(EI_{j})$.
For $1 \leq k \not= l \leq 7, \ 0 \leq m,n \leq 7$, and $a,b,c,d = 0,1$, we have
\[
\begin{split}
& \mathrm{Ad}( \psi(\gamma_{m}^{0,0}, \gamma_{k}^{a,b}) ) |_{\frak{su}^{\alpha, \beta}(3)} 
\not= \mathrm{Ad}( \psi(\gamma_{n}^{0,0}, \gamma_{l}^{c,d}) ) |_{\frak{su}^{\alpha, \beta}(3)}, \\
& \mathrm{Ad}( \psi(\gamma_{m}^{0,0}, \gamma_{k}^{a,b}) ) |_{\frak{su}^{\alpha, \beta}(3)} 
= \mathrm{Ad}( \psi(\gamma_{n}^{0,0}, \gamma_{k}^{c,d}) ) |_{\frak{su}^{\alpha, \beta}(3)}, \\
\end{split}
\]
Since $E_{6}$ is the identity component of $C(SU^{\alpha, \beta}(3), E_{8})$, the restriction of $\mathrm{Ad}(p)$ to $\frak{su}^{\alpha,\beta}(3)$ is constant along each $E_{6}$-orbit.
It follows that
\[
A(EI_{j}) \subset \{ \psi(\gamma_k^{0,0},\gamma_j^{a,b}) \ ;\ 1 \leq k \leq 7,\ a,b = 0,1 \}.
\]
Conversely, $\{ \gamma_{k}^{a,b} \ ;\ 1 \leq k \leq 7, a,b = 0,1 \} \subset \mathcal{O}_{Spin(8)}(\gamma_{1}^{0,0})$.
Since $\phi(Spin^{0}(8)) \subset E_{6}$, we obtain
\[
\{ \psi(\gamma_k^{0,0},\gamma_j^{a,b}) \ ;\ 1 \leq k \leq 7,\ a,b = 0,1 \} \subset A(EI_{j}).
\]
Therefore,
\[
A(EI_{j}) = \{ \psi(\gamma_{k}^{0,0}, \gamma_{j}^{a,b}) \ ;\ 1 \leq k \leq 7, a,b = 0,1 \}.
\]
Finally,
\[
A(EVIII_{+}) \setminus A(E_{6}) = \left( \bigsqcup_{1 \leq i \leq 3} A(EII_{i}) \right) \sqcup \left( \bigsqcup_{4 \leq j \leq 7} A(EI_{j}) \right),
\]
which completes the proof.
\end{proof}


For any $1 \leq i \leq 3$, the element $q_{i} \in A(EIX_{+}) \setminus A(E_{6})$ satisfies
\[
\begin{split}
F^{+}(\mathrm{Ad}(q_{i}), \frak{e}_{6} \cap L) &= \mathbb{R} \oplus \frak{spin}(10), \\
F^{+}(\mathrm{Ad}(q_{i}), \frak{e}_{6} \cap V) &= \{ 0 \}.
\end{split}
\]
Hence, $\dim F^{+}(\mathrm{Ad}(q_{i}), \frak{e}_{6}) = 46$ and the orbit $\mathcal{O}_{E_{6}}(q_{i})$ is isometric to $EIII$.
Set
\[
EIII_{i} = \mathcal{O}_{E_{6}}(q_{i}) \quad (1 \leq i \leq 3).
\]
For $1 \leq i \leq3$, we have $EIII_{i} = r_{i}EIII_{+}$.
For $4 \leq j \leq 7$, the element $q_{j} \in A(EIX_{+}) \setminus A(E_{6})$ satisfies
\[
\begin{split}
F^{+}(\mathrm{Ad}(q_{j}), \frak{e}_{6} \cap L) &= \frak{spin}(9), \\
F^{+}(\mathrm{Ad}(q_{j}), \frak{e}_{6} \cap V) &= (\mathbb{O} \otimes^{+} \mathbb{R}e_{1}) \oplus (\mathbb{O} \otimes^{-} \mathbb{R}e_{1}).
\end{split}
\]
Hence, $\dim F^{+}(\mathrm{Ad}(q_{j}), \frak{e}_{6}) = 52$ and the orbit $\mathcal{O}_{E_{6}}(q_{j})$ is isometric to $EIV$.
Set
\[
EIV_{j} = \mathcal{O}_{E_{6}}(q_{j}) \quad (4 \leq j \leq 7).
\]
Since each element of $E_{6}^{\alpha, \beta}$ commutes with $r_{i} \ (1 \leq i \leq 3)$, we have
\[
\mathcal{O}_{E_{6}}(r_{i}) = \{ r_{i} \} \quad (1 \leq i \leq 3).
\]

\begin{lemm} \label{EIII_{i}EIV_{j}}
For $1 \leq i \leq 3$ and $4 \leq j \leq 7$, set $A(EIII_{i}) = A(E_{8}) \cap EIII_{i}$ and $A(EIV_{j}) = A(E_{8}) \cap EIV_{j}$.
Then,
\[
\begin{split}
& A(EIII_{i}) = \big\{ \psi(\gamma_{0}^{0,0}, \gamma_{i}^{a,b}) \ ;\ (a,b) \not= (0,0) \big\}, \\
& A(EIV_{j}) = \big\{ \psi(\gamma_{0}^{0,0}, \gamma_{j}^{a,b}) \ ;\ a,b = 0,1 \big\}.
\end{split}
\]
For every $p \in A(EVIII_{+}) \setminus A(E_{6})$, the orbit $\mathcal{O}_{E_{6}}(p)$ is one of $\{ r_{i} \}, EIII_{i} \ (1 \leq i \leq 3)$ and $EIV_{j} \ (4 \leq j \leq 7)$. 
\end{lemm}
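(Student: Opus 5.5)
We mirror the proof of Lemma \ref{EII_{i}EI_{j}}; note that the last sentence should concern $A(EIX_{+}) \setminus A(E_{6})$. The argument has three parts: determine $A(EIII_{i})$, determine $A(EIV_{j})$, and then check that $A(EIX_{+}) \setminus A(E_{6})$ is exhausted.

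For $A(EIII_{i})$ we use translation by central elements. Each element of $E_{6}$ commutes with $r_{i}$ for $1 \leq i \leq 3$, and $EIII_{i} = r_{i}EIII_{+}$. Since $A(E_{8})$ is a group containing $r_{i}$, we get $A(EIII_{i}) = r_{i}A(EIII_{+})$. Lemma \ref{EII_{+}EIII_{+}} gives $A(EIII_{+}) = \{ \psi(\gamma_{0}^{0,0}, \gamma_{0}^{a,b}) \ ;\ (a,b) \neq (0,0) \}$. Multiplying by $r_{i} = \psi(\gamma_{0}^{0,0}, \gamma_{i}^{0,0})$ and using $\gamma_{0}^{a,b}\gamma_{i}^{0,0} = \gamma_{i}^{a,b}$ in $D_{4}$ yields the stated set. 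The remaining element $\psi(\gamma_{0}^{0,0}, \gamma_{i}^{0,0}) = r_{i}$ is a fixed point, so $\mathcal{O}_{E_{6}}(r_{i}) = \{ r_{i} \}$ accounts for it.

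For $A(EIV_{j})$ we first bound it from above. As in the proof of Lemma \ref{EII_{i}EI_{j}}, $\mathrm{Ad}(p)|_{\frak{su}^{\alpha,\beta}(3)}$ is constant along $E_{6}$-orbits. This restriction depends only on the index $k$ in $\psi(\gamma_{m}^{0,0}, \gamma_{k}^{a,b})$, so $A(EIV_{j})$ lies in $\{ \psi(\gamma_{m}^{0,0}, \gamma_{j}^{a,b}) \}$. Intersecting with $EIX_{+}$ and using Lemma \ref{E_{8}} forces $m = 0$. This is because $\psi(\gamma_{m}^{0,0}, \gamma_{j}^{a,b})$ with $m, j \geq 1$ lies in $EVIII_{+}$, and $EIV_{j} \subset \mathcal{O}_{E_{8}}(q_{j}) = EIX_{+}$.

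The reverse inclusion, that all four elements $\psi(\gamma_{0}^{0,0}, \gamma_{j}^{a,b})$ lie in one $E_{6}$-orbit, is the main obstacle. Here $\phi(Spin^{0}(8))$ is useless, since it acts trivially on the first factor $\gamma_{0}^{0,0}$. Instead I would find, as in Lemma \ref{EVII_{i}}, a product $K$ of root $SU(2)$-subgroups inside $E_{6}$ that moves $q_{j}$ to $\psi(\gamma_{0}^{0,0}, \gamma_{j}^{a,b})$ for every $(a,b)$. Concretely, $q_{j}$ should act on the torus directions so that $\tau$-type elements of $SU^{\gamma}(2)$, for roots $\gamma$ of $E_{6}$ such as $x_{1} \pm x_{2}$ or $x_{3} \pm x_{4}$, are anti-commuted by $q_{j}$. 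Conjugating within such an $SU^{\gamma}(2)$ then multiplies $q_{j}$ by a central element $\gamma_{0}^{a,b}$ of $Spin^{0}(8)$, transported to the second factor through $\mathrm{Ker}\,\psi$.

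As a fallback, we can count instead. $\mathcal{O}_{E_{6}}(q_{j})$ is an $EIV$, whose $2$-number is $4$; a maximal antipodal set of $EIV \cong E_{6}/F_{4}$ has $4$ points. Then $A(E_{8}) \cap EIV_{j}$ is an antipodal set in $EIV_{j}$ contained in the $4$-element candidate set. Since the candidates are the only elements of $A(E_{8})$ with the correct $\frak{su}^{\alpha,\beta}(3)$-restriction and the correct $E_{8}$-polar, equality follows once the orbit meets $A(E_{8})$ maximally. Proving maximality is again the delicate point, so I would rely on the explicit $K$.

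Finally, counting gives the disjoint union
\[
A(EIX_{+}) \setminus A(E_{6}) = \bigsqcup_{i=1}^{3}\bigl(\{ r_{i} \} \sqcup A(EIII_{i})\bigr) \sqcup \bigsqcup_{j=4}^{7} A(EIV_{j}),
\]
with $3 \cdot 4 + 4 \cdot 4 = 28$ elements. This matches $|A(EIX_{+})| - |A(EII_{+})| = 56 - 28$, which completes the classification.
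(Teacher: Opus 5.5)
Your treatment of $A(EIII_{i})$ (translation by $r_{i}$) matches the paper. So do the upper bound for $A(EIV_{j})$, obtained from the restriction of $\mathrm{Ad}(p)$ to $\frak{su}^{\alpha,\beta}(3)$ together with the $E_{8}$-polar type, and the final count $12+16=28$. You are also right that the last sentence of the statement should concern $A(EIX_{+})\setminus A(E_{6})$.

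\textbf{The gap.} It lies in the reverse inclusion for $A(EIV_{j})$, which is the only nontrivial step, and your concrete mechanism for it cannot work.

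The roots $x_{1}\pm x_{2}$ and $x_{3}\pm x_{4}$ involve only the coordinates of $T_{8}^{0}$. So $SU^{x_{1}\pm x_{2}}(2)$ and $SU^{x_{3}\pm x_{4}}(2)$ lie in $\psi(Spin(8)\times\{e\})$. This is exactly the subgroup you yourself observed centralizes $q_{j}=\psi(\gamma_{0}^{0,0},\gamma_{j}^{1,0})$. Hence $\mathrm{Ad}(q_{j})$ fixes $iA_{x_{1}\pm x_{2}}$ and $iA_{x_{3}\pm x_{4}}$ instead of negating them, and conjugation by these subgroups leaves $q_{j}$ unchanged.

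Nor can you transplant the subgroup $K$ from the $E_{7}$ lemmas. The roots $x_{5}\pm x_{6}$ and $x_{7}+x_{8}$ are not orthogonal to $\beta=x_{6}-x_{7}$, so they are not roots of $E_{6}$. Your counting fallback is circular, as you concede: it presupposes the lower bound $|A(E_{8})\cap EIV_{j}|\geq 4$, which is what has to be proved.

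\textbf{The missing idea.} You need the part of the second $Spin(8)$-factor that does lie in $E_{6}$. No root $\pm x_{k}\pm x_{l}$ with $5\leq k<l\leq 8$ belongs to $\Sigma_{\alpha,0}\cap\Sigma_{\beta,0}$. So this part is only the $2$-torus $H=\exp(\psi(0,\mathbb{R}t_{8}(1,0,0,0)\oplus\mathbb{R}t_{8}(0,1,1,1)))$, whose directions are orthogonal to $\alpha$ and $\beta$; this is the subgroup the paper uses.

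For $4\leq j\leq 7$, the Clifford monomial $\gamma_{j}^{0,0}$ takes exactly one index from each pair $\{0,1\},\{2,3\},\{4,5\},\{6,7\}$. Hence it anticommutes with each $e_{2k-2}e_{2k-1}$. Therefore conjugation by $\psi(1,T_{8}(s,t,t,t))$ sends $q_{j}$ to $\psi(\gamma_{0}^{0,0},T_{8}(2s,2t,2t,2t)\gamma_{j}^{1,0})$. The choice $(s,t)=(\pi,0)$ gives the factor $-1$, and $(s,t)=(\pi/2,\pi/2)$ gives the factor $e_{0}e_{1}\cdots e_{7}$. Together these place all four elements $\psi(\gamma_{0}^{0,0},\gamma_{j}^{a,b})$ in $\mathcal{O}_{H}(q_{j})\subset EIV_{j}$, which closes the gap.
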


\begin{proof}
Since $EIII_{i} = r_{i}EIII_{+}$ for $1 \leq i \leq 3$, we obtain
\[
A(EIII_{i}) = r_{i}A(EIII_{+}) = \{ \psi(\gamma_{0}^{0,0}, \gamma_{i}^{a,b}) \ ;\ (a,b) \not= (0,0) \}.
\]
As in the proof of Lemma \ref{EII_{i}EI_{j}}, it follows that $A(EIV_{j})$ is contained in the right-hand side.
On the other hand, the subgroup $H = \mathrm{exp}(\psi(0, \mathbb{R}t_{8}(1,0,0,0) \oplus \mathbb{R}t_{8}(0,1,1,1)))$ of $E_{6}$ satisfies $\{ \psi(\gamma_{0}^{0,0}, \gamma_{j}^{a,b}) \ ;\ a,b = 0,1 \} \subset \mathcal{O}_{H}(q_{j})$.
Hence, $A(EIV_{j})$ contains the right-hand side, and
\[
A(EIV_{j}) = \{ \psi(\gamma_{0}^{0,0}, \gamma_{j}^{a,b}) \ ;\ a,b = 0,1 \}.
\]
Finally
\[
A(EIX_{+}) \setminus A(E_{6}) = \{ r_{1}, r_{2}, r_{3} \} \sqcup \bigsqcup_{1 \leq i \leq 3}A(EIII_{i}) \sqcup \bigsqcup_{4 \leq j \leq 7}A(EIV_{j}),
\]
which completes the proof.
\end{proof}

By Lemma \ref{EII_{+}EIII_{+}}, \ref{EIII_{i}EIV_{j}}, \ref{EII_{i}EI_{j}}, we obtain the following.

\begin{lemm} \label{E_{6}}
Each orbit $\mathcal{O}_{E_{6}}(p) \ (p \in A(E_{8}))$ is one of
\[
\begin{array}{cccccccccc}
\{ r_{0} \} & EIII_{+} = \mathcal{O}_{E_{6}}(p_{0}), 
& EII_{i} = \mathcal{O}_{E_{6}}(p_{i}), 
& EI_{j} = \mathcal{O}_{E_{6}}(p_{j}), \\
\{ r_{i} \} & EII_{+} = \mathcal{O}_{E_{6}}(q_{0}), 
& EIII_{i} = \mathcal{O}_{E_{6}}(q_{i}), 
& EIV_{j} = \mathcal{O}_{E_{6}}(q_{j}),
\end{array}
\]
where $1 \leq i \leq 3$ and $4 \leq j \leq 7$.
Furthermore,
\[
\begin{array}{llll}
EIII_{+} \cup EII_{1} \subset EVI_{+}, & EII_{+} \cup EIII_{1} \subset EVI'_{+}, \\
EII_{2} \cup EII_{3} \subset EV_{1}, & EI_{4} \cup EI_{5} \subset EV_{2}, & EI_{6} \cup EI_{7} \subset EV_{3}, \\
EIII_{2} \cup EIII_{3} \cup \{ r_{2}, r_{3} \} \subset EVII_{1}, & EIV_{4} \cup EIV_{5} \subset EVII_{2}, & EIV_{6} \cup EIV_{7} \subset EVII_{3}.
\end{array}
\]
\end{lemm}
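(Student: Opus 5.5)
The list of orbits is the union of what Lemmas \ref{EII_{+}EIII_{+}}, \ref{EII_{i}EI_{j}} and \ref{EIII_{i}EIV_{j}} give. By Lemma \ref{E_{8}}, $A(E_{8})=\{r_{0}\}\sqcup A(EVIII_{+})\sqcup A(EIX_{+})$. The lemmas then cover the pieces as follows.
\begin{itemize}
\item Lemma \ref{EII_{+}EIII_{+}} covers $A(E_{6})$.
\item Lemma \ref{EII_{i}EI_{j}} decomposes $A(EVIII_{+})\setminus A(E_{6})$ into the sets $A(EII_{i})$ and $A(EI_{j})$.
\item Lemma \ref{EIII_{i}EIV_{j}} decomposes $A(EIX_{+})\setminus A(E_{6})$ into $\{r_{i}\}$, $A(EIII_{i})$ and $A(EIV_{j})$. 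Its final sentence should read $EIX_{+}$ rather than $EVIII_{+}$, as its proof shows.
\end{itemize}
Since every $p$ lies in one of these sets, $\mathcal{O}_{E_{6}}(p)$ is one of the listed orbits. The representatives $p_{0},p_{i},p_{j},q_{0},q_{i},q_{j}$ are read off from those lemmas.

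For the inclusions, I will use the following observation. If $E_{6}\subset E_{7}$ and $p\in A(E_{8})$, then $\mathcal{O}_{E_{6}}(p)\subset\mathcal{O}_{E_{7}}(p)$. So it suffices to locate one representative of each $E_{6}$-orbit in the appropriate $E_{7}$-orbit, using the explicit sets $A(\cdot)$ from Lemmas \ref{EVI_{+}}, \ref{EV_{i}}, \ref{EVII_{i}}. Then the inclusions are as follows.
\begin{itemize}
\item $p_{0}\in A(EVI_{+})$ gives $EIII_{+}\subset EVI_{+}$.
\item $p_{1}=\psi(\gamma_{1}^{0,0},\gamma_{1}^{0,0})\in A(EVI_{+})$ gives $EII_{1}\subset EVI_{+}$.
\item $q_{0}\in A(EVI'_{+})$ gives $EII_{+}\subset EVI'_{+}$.
\item $q_{1}=\psi(\gamma_{0}^{0,0},\gamma_{1}^{1,0})\in A(EVI'_{+})$ gives $EIII_{1}\subset EVI'_{+}$.
\end{itemize}
Next, Lemma \ref{EV_{i}} says $A(EV_{i})$ consists of the elements $\psi(\gamma_{k}^{0,0},\gamma_{2i}^{a,b})$ and $\psi(\gamma_{k}^{0,0},\gamma_{2i+1}^{a,b})$. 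Hence $p_{2},p_{3}\in EV_{1}$, $p_{4},p_{5}\in EV_{2}$ and $p_{6},p_{7}\in EV_{3}$. This gives $EII_{2}\cup EII_{3}\subset EV_{1}$, $EI_{4}\cup EI_{5}\subset EV_{2}$ and $EI_{6}\cup EI_{7}\subset EV_{3}$.

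Similarly, Lemma \ref{EVII_{i}} gives $q_{2},q_{3}\in EVII_{1}$, $q_{4},q_{5}\in EVII_{2}$ and $q_{6},q_{7}\in EVII_{3}$. It also gives $r_{2}=\psi(\gamma_{0}^{0,0},\gamma_{2}^{0,0})$ and $r_{3}=\psi(\gamma_{0}^{0,0},\gamma_{3}^{0,0})$ in $A(EVII_{1})$. Since $\{r_{i}\}$ are single-point $E_{6}$-orbits, they lie in $EVII_{1}$ directly.

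The only point needing care is whether $EIII_{2}$, say, could also lie in another $E_{7}$-orbit. It cannot, because distinct $E_{7}$-orbits are disjoint. So it suffices to exhibit one representative per orbit, and the main work reduces to the bookkeeping of indices $(a,b)$, for instance that $q_{i}$ uses $\gamma_{i}^{1,0}$ with $(a,b)\neq(0,0)$. This bookkeeping is routine given the earlier lemmas.
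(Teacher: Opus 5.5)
Your proposal is correct and follows the paper's own argument. The paper proves the orbit list by combining the three preceding $E_{6}$-lemmas, and the inclusions follow, as in your proof, from $\mathcal{O}_{E_{6}}(p) \subset \mathcal{O}_{E_{7}}(p)$ together with the explicit sets $A(EVI_{+})$, $A(EVI'_{+})$, $A(EV_{i})$, $A(EVII_{i})$; your membership checks for each representative are accurate. You are also right that the final sentence of the lemma on $EIII_{i}$ and $EIV_{j}$ should refer to $A(EIX_{+}) \setminus A(E_{6})$ rather than $A(EVIII_{+}) \setminus A(E_{6})$.
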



\subsection{The $F_{4}$-orbits} \label{F_{4}-orbit}

As in the previous subsections, we first determine the intersection $A(F_{4}) = A(E_{8}) \cap F_{4}$.
The subgroup
\[
A(E_{8}) \cap C(G'_{2}, E_{8}) = \big\{ \psi(\gamma_{k}^{0,0}, \gamma_{0}^{a,b}) \ ;\ 1 \leq k \leq 7, a,b = 0,1 \big\}
\]
is contained in the subgroup $\phi(Spin^{0}(8)) \cong Spin(8)$ of $F_{4}$.
Therefore, we have
\[
A(F_{4}) = A(E_{8}) \cap C(G'_{2}, E_{8}).
\]
Note that $A(F_{4}) = A(E_{6})$.

According to the classification of polars \cite{Chen-Nagano1}, $F_{4}$ has two polars, isometric to $FI$ and $FII$, respectively.
Let $FI_{+}$ and $FII_{+}$ denote the polars of $e$ isometric to $FI$ and $FII$, respectively.

\begin{lemm} \label{FI_{+}FII_{+}}
Set $A(FI_{+}) = A(E_{8}) \cap FI_{+}$ and $A(FII_{+}) = A(E_{8}) \cap FII_{+}$.
Then,
\[
\begin{split}
& A(FI_{+}) = \{ \psi(\gamma_{k}^{0,0}, \gamma_{0}^{a,b}) \ ;\ 1 \leq k \leq 7, a,b = 0,1 \}, \\
& A(FII_{+}) = \{ \psi(\gamma_{0}^{0,0}, \gamma_{0}^{a,b}) \ ;\ (a,b) \not= (0,0) \}. 
\end{split}
\]
For every $p \in A(F_{4})$, the orbit $\mathcal{O}_{F_{4}}(p)$ is one of $\{ r_{0} \}, FI_{+}$, and $FII_{+}$.
\end{lemm}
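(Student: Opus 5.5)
The plan follows the proof of Lemma \ref{EII_{+}EIII_{+}} almost verbatim, with $E_{6}$ replaced by $F_{4}$. Since every element of $A(F_{4})$ is an involution in $F_{4}$, the set $A(F_{4})\setminus\{r_{0}\}$ lies in $F(s_{e},F_{4})$. Hence each $\mathcal{O}_{F_{4}}(p)$ with $p\neq r_{0}$ is a polar of $e$ in $F_{4}$, so it is $FI_{+}$ or $FII_{+}$ (Table \ref{polars}). Moreover, $F_{4}$ has no nontrivial center, so there is no pole to worry about. By Table \ref{involutions}, the type is decided by $\dim F^{+}(\mathrm{Ad}(p),\frak{f}_{4})$: the value $24$ gives $FI$, and $36$ gives $FII$.

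We have $A(F_{4})=A(E_{6})=\{r_{0}\}\sqcup A(EII_{+})\sqcup A(EIII_{+})$. Since $\mathcal{O}_{F_{4}}(p)\subset\mathcal{O}_{E_{6}}(p)$, all elements of $A(EII_{+})$ lie in $E_{6}\cap EIX_{+}$ and all elements of $A(EIII_{+})$ lie in $E_{6}\cap EVIII_{+}$. Consequently $F_{4}\cap EVIII_{+}$ and $F_{4}\cap EIX_{+}$ are unions of polars of $e$ in $F_{4}$. The plan is to show that each is a single polar, identified by one representative.

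First, for $p_{0}=\psi(\gamma_{0}^{0,0},\gamma_{0}^{1,0})$, the element acts trivially on $\frak{spin}(9)$, since $\phi(-1)$ is trivial on $L$, and acts by $-1$ on $V$, hence on $\mathbb{O}^{+}\oplus\mathbb{O}^{-}$. So $F^{+}(\mathrm{Ad}(p_{0}),\frak{f}_{4})=\frak{spin}(9)$, of dimension $36$, and $\mathcal{O}_{F_{4}}(p_{0})$ is $FII$.

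Second, for $q_{0}=\psi(\gamma_{1}^{0,0},\gamma_{0}^{0,0})$, the element $\gamma_{1}^{0,0}=-e_{4}e_{5}e_{6}e_{7}$ acts on $\frak{spin}(9)\subset Cl(\mathrm{span}\{e_{0},\dots,e_{8}\})$ with fixed subalgebra $\frak{spin}(4)\oplus\frak{spin}(5)$, of dimension $6+10=16$. On $\mathbb{O}^{\pm}$ it acts through $\Delta_{8}^{\pm}(\gamma_{1}^{0,0})$, whose $+1$-eigenspaces have dimension $4$ each, by the table in Lemma \ref{E_{8}}. This gives $16+8=24$, so $\mathcal{O}_{F_{4}}(q_{0})$ is $FI$.

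The main obstacle is showing that these intersections are single orbits, since each type occurs as only one polar. Because $F_{4}$ has exactly one polar of each type, $FI_{+}$ and $FII_{+}$ are the unique polars of their dimensions. It then suffices to check that every $p$ in $A(EIII_{+})$ gives dimension $36$ and every $p$ in $A(EII_{+})$ gives dimension $24$.

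For $A(EIII_{+})$, the elements $\psi(\gamma_{0}^{0,0},\gamma_{0}^{a,b})$ with $(a,b)\neq(0,0)$ act on $V$ via the central elements $\gamma_{0}^{a,b}$ in the second factor. These have trivial $+1$-space on $\mathbb{O}\otimes^{\pm}\mathbb{R}e_{0}$ in each $\pm$ component, by the same table. They fix $\frak{spin}(9)$ because they commute with $Spin(9)$ modulo $\mathrm{Ker}\,\psi$, or alternatively because they equal $p_{0}$ times central elements of $\psi(Spin(8)\times Spin(8))$. This gives dimension $36$ uniformly.

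For $A(EII_{+})$, the elements $\psi(\gamma_{k}^{0,0},\gamma_{0}^{a,b})$ are all conjugate under $\phi(Spin^{0}(8))\subset F_{4}$, since all $\gamma_{k}^{0,0}$ with $k\geq1$ lie in $\mathcal{O}_{Spin(8)}(\gamma_{1}^{0,0})$, as in Lemma \ref{EV_{i}}, modulo the central factors $\gamma_{0}^{a,b}$. Those factors can be absorbed using $\mathrm{Ker}\,\psi$ together with multiplication by the central elements of $Spin^{0}(8)$, which preserve $\mathcal{O}_{Spin(8)}(\gamma_{1}^{0,0})$. Alternatively, the dimension count $24$ can be redone directly by the same computation as for $q_{0}$.

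Combining these steps gives $A(FI_{+})=A(EII_{+})$ and $A(FII_{+})=A(EIII_{+})$, and hence the orbit classification.
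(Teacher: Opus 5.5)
Your overall route matches the paper's. The nontrivial involutions of $F_4$ form $FI_+\sqcup FII_+$, the type is read off from $\dim F^{+}(\mathrm{Ad}(p),\mathfrak{f}_4)$, and your counts of $36$ at $p_0$ and $24$ at $q_0$ are correct. However, the step you call the main obstacle is not needed, and your treatment of it contains false claims. You already noted that $F_4\cap EVIII_+$ and $F_4\cap EIX_+$ are unions of $F_4$-polars. Since $FII_+=\mathcal{O}_{F_4}(p_0)\subset\mathcal{O}_{E_8}(p_0)=EVIII_+$, $FI_+=\mathcal{O}_{F_4}(q_0)\subset EIX_+$, and $EVIII_+\cap EIX_+=\emptyset$, you get $F_4\cap EVIII_+=FII_+$ and $F_4\cap EIX_+=FI_+$ at once. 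Then $A(FII_+)=A(F_4)\cap A(EVIII_+)$ and $A(FI_+)=A(F_4)\cap A(EIX_+)$, and Lemma \ref{E_{8}} evaluates both directly. This is exactly how the paper argues, and no element-by-element check is required.

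Your element-by-element check for $A(EIII_+)$ is wrong as written. Take $(a,b)=(0,1)$. The element $\psi(\gamma_0^{0,0},\gamma_0^{0,1})=\phi(e_8e_9\cdots e_{15})$ does not centralize $Spin(9)$: conjugation by $e_8\cdots e_{15}$ sends $e_ie_8$ to $-e_ie_8$ for $0\le i\le 7$. So the fixed part of $\mathfrak{spin}(9)$ is only $\mathfrak{spin}^0(8)$, of dimension $28$. Being central in $\psi(Spin(8)\times Spin(8))$ gives nothing here, because $Spin(9)$ is not contained in that subgroup. Moreover, the table you cite says $\Delta_8^+(\gamma_0^{0,1})=\mathrm{id}$. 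Hence $\mathbb{O}^+=\mathbb{O}\otimes^+\mathbb{R}e_0$ is fixed, of dimension $8$, rather than $\{0\}$. The total $28+8=36$ happens to be right, but both of your justifications are false. The case $(1,1)$ is the same with $\mathbb{O}^-$ in place of $\mathbb{O}^+$; only $p_0$ fixes all of $\mathfrak{spin}(9)$ and nothing in $V$.

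Your conjugation argument for $A(EII_+)$ is correct. You rewrite $\psi(\gamma_k^{0,0},\gamma_0^{a,b})=\psi(\gamma_k^{a,b},\gamma_0^{0,0})$ using $\mathrm{Ker}\,\psi$ and then conjugate inside $\phi(Spin^0(8))\subset F_4$. It is, however, also unnecessary once you use the containment argument above.
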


\begin{proof}
The two intersections $F_{4} \cap EVIII_{+}$ or $F_{4} \cap EIX_{+}$ are the polars $FI_{+}$ and $FII_{+}$ in some order.
The element $p_{0} \in A(F_{4}) \cap EVIII_{+}$ satisfies 
\[
\begin{split}
& F^{+}(\mathrm{Ad}(p_{0}), \frak{f}_{4} \cap L) = \frak{spin}(9), \\
& F^{+}(\mathrm{Ad}(p_{0}), \frak{f}_{4} \cap V) = \{ 0 \}. \\
\end{split}
\]
It follows that $\dim F^{+}(\mathrm{Ad}(p_{0}), \frak{f}_{4}) = 36$.
Hence, the orbit $\mathcal{O}_{F_{4}}(p_{0})$ is isometric to $FII$.
Furthermore, the element $q_{0} \in A(F_{4}) \cap EIX_{+}$ satisfies
\[
\begin{split}
& F^{+}(\mathrm{Ad}(q_{0}), \frak{f}_{4} \cap L) = \frak{spin}(5) \oplus \frak{spin}(4), \\
& F^{+}(\mathrm{Ad}(q_{0}), \frak{f}_{4} \cap V) = (\mathbb{H} \otimes^{+} \mathbb{R}) \oplus ( \mathbb{H} \otimes^{-} \mathbb{R}). \\
\end{split}
\]
It follows that $\dim F^{+}(\mathrm{Ad}(q_{0}), \frak{f}_{4}) = 24$.
Hence, the orbit $\mathcal{O}_{F_{4}}(q_{0})$ is isometric to $FI$.
Thus,
\[
FI_{+} = F_{4} \cap EIX_{+}, \qquad
FII_{+} = F_{4} \cap EVIII_{+}.
\]
By Lemma \ref{E_{8}}, we have
\[
\begin{split}
& A(FI_{+}) = \{ \psi(\gamma_{k}^{0,0}, \gamma_{0}^{a,b}) \ ;\ 1 \leq k \leq 7, a,b = 0,1 \}, \\
& A(FII_{+}) = \{ \psi(\gamma_{0}^{0,0}, \gamma_{0}^{a,b}) \ ;\ (a,b) \not= (0,0) \}. 
\end{split}
\]
This proves the first assertion.
Finally, $A(F_{4}) = \{ r_{0} \} \sqcup A(FI_{+}) \sqcup A(FII_{+})$, which completes the proof.
\end{proof}

Note that
\[
FI_{+} = \mathcal{O}_{F_{4}}(q_{0}), \quad FII_{+} = \mathcal{O}_{F_{4}}(p_{0}).
\]
As in the previous subsections, we divide the arguments into two parts.


For any $1 \leq k \leq 7$, the element $p_{k} \in A(EVIII_{+}) \setminus A(F_{4})$ satisfies
\[
\begin{split}
& F^{+}(\mathrm{Ad}(p_{k}), \frak{f}_{4} \cap L) = \frak{spin}(4) \oplus \frak{spin}(5), \\
& F^{+}(\mathrm{Ad}(p_{k}), \frak{f}_{4} \cap V) = (\mathbb{H} \otimes^{+} \mathbb{R}) \oplus (\mathbb{H} \otimes^{-} \mathbb{R}).
\end{split}
\]
Hence, $\dim F^{+}(\mathrm{Ad}(p_{k}), \frak{f}_{4}) = 24$.
It follows that the orbit $\mathcal{O}_{F_{4}}(p_{k})$ is isometric to $FI$.
For $1 \leq k \leq 7$, set
\[
FI_{k} = \mathcal{O}_{F_{4}}(p_{k}).
\]

\begin{lemm} \label{FI_{k}}
For any $1 \leq k \leq7$, set $A(FI_{k}) = A(E_{8}) \cap FI_{k}$.
Then,
\[
A(FI_{k}) = \big\{ \psi(\gamma_{l}^{0,0}, \gamma_{k}^{a,b}) \ ;\ 1 \leq l \leq 7, a,b = 0,1 \big\}.
\]
For every $p \in A(EVIII_{+}) - A(F_{4})$, the orbit $\mathcal{O}_{F_{4}}(p)$ is one of $FI_{k} \ (1 \leq k \leq 7)$.
\end{lemm}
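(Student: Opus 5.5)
We will follow the pattern of Lemmas \ref{EV_{i}} and \ref{EII_{i}EI_{j}}: first bound $A(FI_{k})$ from above by an invariant constant along $F_{4}$-orbits, then show the reverse inclusion using a subgroup of $F_{4}$ that moves $p_{k}$ through the candidate points. At the end we check that the sets $A(FI_{k})$ partition $A(EVIII_{+}) \setminus A(F_{4})$.

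For the upper bound, we use that $F_{4}$ is the identity component of $C(G'_{2}, E_{8})$. Hence for $g \in F_{4}$ and $p \in A(E_{8})$, $\mathrm{Ad}(gpg^{-1})|_{\frak{g}'_{2}} = \mathrm{Ad}(p)|_{\frak{g}'_{2}}$, so this restriction is constant along each $F_{4}$-orbit. For $p = \psi(\gamma_{m}^{0,0}, \gamma_{l}^{c,d})$, the element $\psi(1, \gamma_{l}^{c,d})$ acts on $\frak{g}'_{2} = \psi(0, \frak{g}_{2})$ by $\mathrm{Ad}(\gamma_{l})$, because the central signs $(c,d)$ act trivially and the first factor commutes with $G'_{2}$. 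The involutions $\gamma_{l}$ $(0 \leq l \leq 7)$ are eight distinct elements of $G_{2}$, and $G_{2}$ has trivial center, so the restrictions $\mathrm{Ad}(\gamma_{l})|_{\frak{g}_{2}}$ are pairwise distinct. Therefore, if $\psi(\gamma_{m}^{0,0}, \gamma_{l}^{c,d}) \in FI_{k}$, then $l = k$. Lemma \ref{E_{8}} together with $FI_{k} \subset EVIII_{+}$ then forces $1 \leq m \leq 7$. This gives
\[
A(FI_{k}) \subset \{ \psi(\gamma_{l}^{0,0}, \gamma_{k}^{a,b}) \ ;\ 1 \leq l \leq 7, a,b = 0,1 \}.
\]

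For the reverse inclusion, we use $\phi(Spin^{0}(8)) \subset F_{4}$: indeed $Spin^{0}(8) \subset Spin(9)$, which is contained in $F_{4}$. From the proof of Lemma \ref{EV_{i}}, all $\gamma_{l}^{a,b}$ with $1 \leq l \leq 7$ lie in the single orbit $\mathcal{O}_{Spin(8)}(\gamma_{1}^{0,0})$. Conjugating by $\psi(g, 1)$ with $g \in Spin^{0}(8)$ therefore moves $p_{k} = \psi(\gamma_{1}^{0,0}, \gamma_{k}^{0,0})$ to $\psi(\gamma_{l}^{a,b}, \gamma_{k}^{0,0})$ for every $1 \leq l \leq 7$ and all $a,b$. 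The remaining step is to transfer the central signs to the second factor. Since $\psi(\gamma_{0}^{a,b}, \gamma_{0}^{a,b}) = e$, we have
\[
\psi(\gamma_{l}^{a,b}, \gamma_{k}^{0,0}) = \psi(\gamma_{l}^{0,0}\gamma_{0}^{a,b}, \gamma_{k}^{0,0}) = \psi(\gamma_{l}^{0,0}, \gamma_{0}^{a,b}\gamma_{k}^{0,0}) = \psi(\gamma_{l}^{0,0}, \gamma_{k}^{a,b}).
\]
So every element of the displayed set lies in $FI_{k}$, and the equality follows.

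Finally, Lemma \ref{E_{8}} and Lemma \ref{FI_{+}FII_{+}} give
\[
A(EVIII_{+}) \setminus A(F_{4}) = \{ \psi(\gamma_{l}^{0,0}, \gamma_{k}^{a,b}) \ ;\ 1 \leq l,k \leq 7,\ a,b = 0,1 \} = \bigsqcup_{k=1}^{7} A(FI_{k}),
\]
which proves the last assertion. The step that needs the most care is the distinctness of the restrictions $\mathrm{Ad}(\gamma_{l})|_{\frak{g}_{2}}$ together with the identification of how $\psi(\gamma_{m}^{0,0}, \gamma_{l}^{c,d})$ acts on $\frak{g}'_{2}$. The cleanest check is that $\mathrm{Ad}(\gamma_{l}) = \mathrm{Ad}(\gamma_{l'})$ on $\frak{g}_{2}$ forces $\gamma_{l}\gamma_{l'}^{-1}$ into the center of $G_{2}$, which is trivial.
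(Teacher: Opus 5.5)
Your proof is correct, but it takes a different route from the paper's. The paper argues by translation. Since $r_{k} = \psi(\gamma_{0}^{0,0}, \gamma_{k}^{0,0})$ lies in $G_{2}'$, it commutes with every element of $F_{4}$. Together with $p_{k} = r_{k}q_{0}$, this gives $FI_{k} = \mathcal{O}_{F_{4}}(r_{k}q_{0}) = r_{k}FI_{+}$. Because $r_{k} \in A(E_{8})$ and $A(E_{8})$ is a group, $A(FI_{k}) = r_{k}A(FI_{+})$, and the formula follows at once from the description of $A(FI_{+})$ in Lemma \ref{FI_{+}FII_{+}}.

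You instead rerun the two-sided scheme of Lemmas \ref{EV_{i}} and \ref{EII_{i}EI_{j}}:
\begin{itemize}
\item For the upper bound, you use the orbit invariant $\mathrm{Ad}(p)|_{\frak{g}_{2}'}$, whose eight possible values are separated by the triviality of the center of $G_{2}$, and then Lemma \ref{E_{8}} to force the first index to be nonzero.
\item For the lower bound, you conjugate by $\phi(Spin^{0}(8)) \subset F_{4}$, use that $Spin(8)$ acts transitively on its non-central involutions, and move the central signs to the second factor via $\psi(\gamma_{0}^{a,b},\gamma_{0}^{a,b}) = e$.
\end{itemize}

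The paper's argument is shorter. It also exhibits the structural fact that the seven orbits $FI_{k}$ are translates of the polar $FI_{+}$ by the $F_{4}$-fixed points $r_{k}$. However, it depends on having already computed $A(FI_{+})$ through the identification $FI_{+} = F_{4} \cap EIX_{+}$. Your argument does not need that computation, and it works equally well where no translating element centralized by the group is available, as for the $EV_{i}$. The cost is the extra orbit invariant and the $Spin(8)$-conjugacy fact.
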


\begin{proof}
Since $r_{k} \in G_{2}' \subset \phi(Spin^{1}(8))$ for $1 \leq k \leq 7$, each element of $F_{4}$ commutes with $r_{k}$.
We have $FI_{k} = r_{k}FI_{+}$ for $1 \leq k \leq 7$.
Consequently,
\[
A(FI_{k}) = r_{k}A(FI_{+}) = \{ \psi(\gamma_{l}^{0,0}, \gamma_{k}^{a,b}) \ ;\ 1 \leq l \leq 7, a,b = 0,1 \}.
\]
Finally,
\[
A(EVIII_{+}) \setminus A(F_{4}) = \bigsqcup_{1 \leq k \leq 7}A(FI_{k}),
\]
which completes the proof.
\end{proof}


For any $1 \leq k \leq 7$, the element $q_{k} \in A(EIX_{+}) \setminus A(F_{4})$ satisfies
\[
\begin{split}
& F^{+}(\mathrm{Ad}(q_{k}), \frak{f}_{4} \cap L) = \frak{spin}(9), \\
& F^{+}(\mathrm{Ad}(q_{k}), \frak{f}_{4} \cap V) = \{ 0 \}.
\end{split}
\]
Hence, $\dim F^{+}(\mathrm{Ad}(q_{k}), \frak{f}_{4}) = 36$.
It follows that $\mathcal{O}_{F_{4}}(q_{k})$ is isometric to $FII$.
Set
\[
FII_{k} = \mathcal{O}_{F_{4}}(q_{k}) \quad (1 \leq k \leq 7).
\]
Since each element of $F_{4}$ commutes with $r_{k}$ for any $1 \leq k \leq 7$,
\[
\mathcal{O}_{F_{4}}(r_{k}) = \{ r_{k} \}.
\]

\begin{lemm} \label{FII_{k}}
For any $1 \leq k \leq7$, set $A(FII_{k}) = A(E_{8}) \cap FII_{k}$.
Then,
\[
A(FII_{k}) = \big\{ \psi(\gamma_{0}^{0,0}, \gamma_{k}^{a,b}) \ ;\ (a,b) \not= (0,0) \big\}.
\]
For every $p \in A(EIX_{+}) \setminus A(F_{4})$, the orbit $\mathcal{O}_{F_{4}}(p)$ is one of $\{ r_{k} \}$ and $FII_{k} \ (1 \leq k \leq 7)$.
\end{lemm}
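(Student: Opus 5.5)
The argument follows the pattern of Lemma \ref{FI_{k}}, using the fact that $r_{k}$ is central with respect to $F_{4}$. Since $r_{k} = \psi(\gamma_{0}^{0,0}, \gamma_{k}^{0,0}) \in G_{2}' \subset \phi(Spin^{1}(8))$ for $1 \leq k \leq 7$, every element of $F_{4}$ commutes with $r_{k}$. Hence left multiplication by $r_{k}$ commutes with $F_{4}$-conjugation, and $\mathcal{O}_{F_{4}}(r_{k}p) = r_{k}\mathcal{O}_{F_{4}}(p)$ for every $p$.

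The first step is to show $FII_{k} = r_{k} FII_{+}$. Since $p_{0} = \psi(\gamma_{0}^{0,0}, \gamma_{0}^{1,0})$, we have
\[
r_{k} p_{0} = \psi(\gamma_{0}^{0,0}, \gamma_{k}^{0,0}\gamma_{0}^{1,0}) = \psi(\gamma_{0}^{0,0}, \gamma_{k}^{1,0}) = q_{k},
\]
because $\gamma_{0}^{1,0} = -1$ is central and $\gamma_{k}^{0,0}\cdot(-1) = \gamma_{k}^{1,0}$ in the identification $D_{4}\cong Spin(8)$. Therefore $FII_{k} = \mathcal{O}_{F_{4}}(q_{k}) = r_{k}\mathcal{O}_{F_{4}}(p_{0}) = r_{k}FII_{+}$. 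Because $r_{k} \in A(E_{8})$ and $A(E_{8})$ is a group, $A(E_{8}) \cap r_{k}FII_{+} = r_{k}\bigl(A(E_{8})\cap FII_{+}\bigr)$. Lemma \ref{FI_{+}FII_{+}} then gives
\[
A(FII_{k}) = r_{k}A(FII_{+}) = \{ \psi(\gamma_{0}^{0,0}, \gamma_{k}^{0,0}\gamma_{0}^{a,b}) ;\ (a,b)\neq(0,0)\} = \{ \psi(\gamma_{0}^{0,0}, \gamma_{k}^{a,b}) ;\ (a,b)\neq(0,0)\},
\]
using $\gamma_{k}^{0,0}\gamma_{0}^{a,b} = \gamma_{k}^{a,b}$. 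This last identity is a direct check from the definitions $g^{a,b}$, since $\mathrm{id}^{a,b}$ multiplies the components by the signs defining the superscripts.

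The second step is a counting argument. By Lemma \ref{E_{8}}, $A(EIX_{+})$ consists of $\psi(\gamma_{0}^{0,0},\gamma_{i}^{a,b})$ and $\psi(\gamma_{i}^{0,0},\gamma_{0}^{a,b})$ with $1\le i\le 7$. The second family is exactly $A(FI_{+}) \subset A(F_{4})$, so
\[
A(EIX_{+})\setminus A(F_{4}) = \{\psi(\gamma_{0}^{0,0},\gamma_{k}^{a,b}) ;\ 1\le k\le 7,\ a,b=0,1\}.
\]
For each $k$, the element with $(a,b)=(0,0)$ is $r_{k}$, and $\mathcal{O}_{F_{4}}(r_{k}) = \{r_{k}\}$ has already been noted. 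The remaining three elements lie in $A(FII_{k})$. This yields the disjoint decomposition
\[
A(EIX_{+})\setminus A(F_{4}) = \bigsqcup_{k=1}^{7}\bigl(\{r_{k}\}\sqcup A(FII_{k})\bigr),
\]
which proves the final assertion.

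The only step that needs care is the group-theoretic identity $\gamma_{k}^{0,0}\gamma_{0}^{a,b} = \gamma_{k}^{a,b}$ together with $r_{k}p_{0} = q_{k}$ in $E_{8}$ via $\psi$. The kernel of $\psi$ causes no issue, because we multiply genuine representatives and $\psi$ is a homomorphism. Everything else is inherited from Lemma \ref{FI_{+}FII_{+}}.
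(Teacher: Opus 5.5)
Your proposal is correct and follows the paper's proof. You translate $FII_{+}$ by $r_{k}$, which commutes with $F_{4}$, to get $FII_{k}=r_{k}FII_{+}$ and $A(FII_{k})=r_{k}A(FII_{+})$, then conclude from the disjoint decomposition of $A(EIX_{+})\setminus A(F_{4})$; you also spell out $r_{k}p_{0}=q_{k}$ and $\gamma_{k}^{0,0}\gamma_{0}^{a,b}=\gamma_{k}^{a,b}$, which the paper leaves implicit.
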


\begin{proof}
Since $FII_{k} = r_{k}FII_{+} \ (1 \leq k \leq 7)$, we have
\[
A(FII_{k}) = r_{k}A(FII_{+}) = \{ \psi(\gamma_{0}^{0,0}, \gamma_{k}^{a,b}) \ ;\ (a,b) \not= (0,0) \}.
\]
Finally,
\[
A(EIX_{+}) \setminus A(F_{4}) = \{ r_{k} \ ;\ 1 \leq k \leq 7 \} \sqcup  \bigsqcup_{1 \leq k \leq 7}A(FII_{k}),
\]
which completes the proof.
\end{proof}

By Lemmas \ref{FI_{+}FII_{+}}, \ref{FI_{k}}, and \ref{FII_{k}}, we obtain the following.

\begin{lemm} \label{F_{4}}
For every $p \in A(E_{8})$, the orbit $\mathcal{O}_{F_{4}}(p)$ is one of 
\[
\{ r_{0} \}, \ \{ r_{k} \}, \ FI_{+}, \ FI_{k}, \ FII_{+}, \ FII_{k} \ (1 \leq k \leq 7).
\]
Furthermore,
\[
\begin{array}{llcccc}
FI_{+} \subset EII_{+}, & FII_{+} \subset EIII_{+}, \\
FI_{i} \subset EII_{i}, & FII_{i} \subset EIII_{i} \quad (1 \leq i \leq 3), \\
FI_{j} \subset EI_{j}, & FII_{j} \cup \{ r_{j} \} \subset EIV_{j} \quad (4 \leq j \leq 7).
\end{array}
\]
\end{lemm}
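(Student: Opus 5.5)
The first assertion (the list of orbits) needs no new computation. Lemmas \ref{FI_{+}FII_{+}}, \ref{FI_{k}}, and \ref{FII_{k}} already partition $A(E_{8})$ into the three pieces $A(F_{4})$, $A(EVIII_{+}) \setminus A(F_{4})$ and $A(EIX_{+}) \setminus A(F_{4})$. By Lemma \ref{E_{8}}, $A(E_{8}) = \{r_{0}\} \sqcup A(EVIII_{+}) \sqcup A(EIX_{+})$, so every $p$ lies in one of these pieces. Each lemma names the $F_{4}$-orbit of every element of its piece, and their union is exactly the displayed list.

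For the inclusions, the plan is to use a single principle. If $p \in A(E_{8})$ lies in an $E_{6}$-orbit $\mathcal{O}_{E_{6}}(p')$, then
\[
\mathcal{O}_{F_{4}}(p) \subset \mathcal{O}_{E_{6}}(p) = \mathcal{O}_{E_{6}}(p'),
\]
because $F_{4} \subset E_{6}$. So for each $F_{4}$-orbit it suffices to find one element of $A(E_{8})$ in it and read off, from Lemmas \ref{EII_{+}EIII_{+}}, \ref{EII_{i}EI_{j}}, and \ref{EIII_{i}EIV_{j}}, which $E_{6}$-orbit contains that element. Natural representatives are $q_{0}, p_{0}, p_{k}, q_{k}, r_{k}$.

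The cases are as follows.
\begin{itemize}
\item[(i)] $FI_{+} = \mathcal{O}_{F_{4}}(q_{0})$. Since $q_{0} = \psi(\gamma_{1}^{0,0}, \gamma_{0}^{0,0}) \in A(EII_{+})$, we get $FI_{+} \subset \mathcal{O}_{E_{6}}(q_{0}) = EII_{+}$. Likewise $p_{0} \in A(EIII_{+})$ gives $FII_{+} \subset EIII_{+}$.
\item[(ii)] $FI_{k} = \mathcal{O}_{F_{4}}(p_{k})$ with $p_{k} = \psi(\gamma_{1}^{0,0}, \gamma_{k}^{0,0})$. This element lies in $A(EII_{k})$ for $1 \le k \le 3$ and in $A(EI_{k})$ for $4 \le k \le 7$, by the explicit descriptions in Lemma \ref{EII_{i}EI_{j}}. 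Indeed $EII_{k} = \mathcal{O}_{E_{6}}(p_{k})$ and $EI_{k} = \mathcal{O}_{E_{6}}(p_{k})$ by definition.
\item[(iii)] $FII_{k} = \mathcal{O}_{F_{4}}(q_{k})$, and by definition $EIII_{i} = \mathcal{O}_{E_{6}}(q_{i})$ and $EIV_{j} = \mathcal{O}_{E_{6}}(q_{j})$. So the inclusions are immediate.
\item[(iv)] For $4 \le j \le 7$, the point $r_{j} = \psi(\gamma_{0}^{0,0}, \gamma_{j}^{0,0})$ belongs to $A(EIV_{j})$ by Lemma \ref{EIII_{i}EIV_{j}}, since there the pair $(a,b) = (0,0)$ is allowed. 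Hence $\{r_{j}\} \subset EIV_{j}$.
\end{itemize}

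The only delicate point is bookkeeping. I need to check that each chosen representative really matches the index conventions of the $E_{6}$-lemmas: for example, that $p_{k}$ has first factor $\gamma_{1}^{0,0}$ with $1 \le 1 \le 7$, and that $r_{j}$ is of the form $\psi(\gamma_{0}^{0,0}, \gamma_{j}^{a,b})$. I also need that orbits of the smaller group through a point are contained in the orbit of the larger group through the same point, which is trivial. I expect no genuine obstacle beyond verifying these memberships against the explicit sets $A(\cdot)$ computed earlier.
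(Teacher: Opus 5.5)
Your proposal is correct and matches the paper's argument. The list of orbits is the union of the three preceding $F_{4}$-lemmas covering $A(F_{4})$, $A(EVIII_{+})\setminus A(F_{4})$ and $A(EIX_{+})\setminus A(F_{4})$. The inclusions hold because $F_{4}\subset E_{6}$ and each $F_{4}$-orbit is taken through the same representative ($p_{0}, q_{0}, p_{k}, q_{k}$) as the corresponding $E_{6}$-orbit, with $\{r_{j}\}\subset EIV_{j}$ following from $r_{j}=\psi(\gamma_{0}^{0,0},\gamma_{j}^{0,0})\in A(EIV_{j})$. The paper states this lemma with only a one-line citation of the preceding lemmas, so your write-up spells out the bookkeeping it leaves implicit.
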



\subsection{The $G_{2}$-orbits} \label{G_{2}-orbit}

An orbit $\mathcal{O}_{G_{2}}(p) \ (p \in A(E_{8}))$ has positive dimension if and only if it is isometric to $G_{2}/SO(4)$.
For any $0 \leq l \leq 7$ and $a,b = 0,1$, set
\[
(G_{2}/SO(4))_{l}^{a,b} = \mathcal{O}_{G_{2}}(\psi(\gamma_{1}^{0,0}, \gamma_{l}^{a,b})).
\]
On the other hand,
\[
\mathcal{O}_{G_{2}}(\psi(\gamma_{0}^{0,0}, \gamma_{l}^{a,b})) = \{ \psi(\gamma_{0}^{0,0}, \gamma_{l}^{a,b}) \}.
\]

\begin{lemm} \label{G_{2}}
Set $A((G_{2}/SO(4))_{l}^{a,b}) = A(E_{8}) \cap (G_{2}/SO(4))_{l}^{a,b}$ for $0 \leq l \leq 7$ and $a,b = 0,1$.
Then,
\[
A((G_{2}/SO(4))_{l}^{a,b}) = \{ \psi(\gamma_{k}^{0,0}, \gamma_{l}^{a,b}) \ ;\ 1 \leq k \leq 7 \}.
\]
For every $p \in A(E_{8})$, the orbit $\mathcal{O}_{G_{2}}(p)$ is one of $\{ \psi(\gamma_{0}^{0,0}, \gamma_{l}^{a,b}) \}$ and $(G_{2}/SO(4))_{l}^{a,b}$ $\ (0 \leq l \leq 7,$ $ a,b = 0,1)$.
\end{lemm}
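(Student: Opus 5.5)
The plan is to reduce everything to $G_{2}$ acting on the first factor of $\psi$. Here $G_{2} = \psi(G_{2} \times \{e\})$, and $\psi$ is a homomorphism on $Spin^{0}(8) \times Spin^{1}(8)$. For $g \in G_{2}$ and $p = \psi(\gamma_{k}^{0,0}, \gamma_{l}^{a,b})$, the two factors commute, so
\[
\psi(g^{0,0},1)\, p\, \psi(g^{0,0},1)^{-1} = \psi\bigl(g^{0,0}\gamma_{k}^{0,0}(g^{0,0})^{-1}, \gamma_{l}^{a,b}\bigr) = \psi\bigl((g\gamma_{k}g^{-1})^{0,0}, \gamma_{l}^{a,b}\bigr).
\]
Hence $\mathcal{O}_{G_{2}}(p) = \psi\bigl(\mathcal{O}_{G_{2}}(\gamma_{k}) \times \{\gamma_{l}^{a,b}\}\bigr)$, with $G_{2}$ acting on itself by conjugation. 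When $k = 0$, $\gamma_{0}$ is the identity, so the orbit is the single point $\psi(\gamma_{0}^{0,0}, \gamma_{l}^{a,b})$. When $1 \leq k \leq 7$, I claim that $\gamma_{k}$ and $\gamma_{1}$ are conjugate in $G_{2}$. This gives $\mathcal{O}_{G_{2}}(p) = (G_{2}/SO(4))_{l}^{a,b}$ and proves the second assertion. The conjugacy follows because $G_{2}$ has a single polar (Table~\ref{polars}), so all nontrivial involutions of $G_{2}$ are conjugate. Alternatively, $G_{2}$ acts transitively on associative $3$-planes (quaternion subalgebras), and $\gamma_{k}$ is $+1$ on $\mathbb{R} \oplus (\text{line } k)$ and $-1$ on its complement.

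For the intersection, the inclusion $\{\psi(\gamma_{k}^{0,0}, \gamma_{l}^{a,b}) ;\ 1 \leq k \leq 7\} \subset (G_{2}/SO(4))_{l}^{a,b}$ follows directly from the conjugacy above. For the reverse inclusion, take $q = \psi(\gamma_{m}^{0,0}, \gamma_{n}^{c,d}) \in A(E_{8})$ lying in the orbit. Then $q = \psi(x^{0,0}, \gamma_{l}^{a,b})$ for some involution $x$ of $G_{2}$ conjugate to $\gamma_{1}$. Since $\mathrm{Ker}\,\psi = \{(\gamma_{0}^{e,f}, \gamma_{0}^{e,f})\}$, we get
\[
\gamma_{m}^{0,0} = x^{0,0}\gamma_{0}^{e,f}, \qquad \gamma_{n}^{c,d} = \gamma_{l}^{a,b}\gamma_{0}^{e,f}
\]
for some $(e,f)$. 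The first equation, read in $D_{4} \subset SO(8)^{3}$, says that $(\gamma_{m}, \gamma_{m}, \gamma_{m}) = (\pm x, \pm x, \pm x)$ with the sign pattern of $\mathrm{id}^{e,f}$. Both $\gamma_{m}$ and $x$ lie in $G_{2}$, and $-x \notin G_{2}$ because an automorphism fixes $1$. The first component therefore forces the sign $+$, so $(e,f) \in \{(0,0),(0,1)\}$. The pattern $\mathrm{id}^{0,1} = (-,-,+)$ also has first sign $-$, which is excluded, so $(e,f) = (0,0)$. Consequently $x = \gamma_{m}$ with $m \neq 0$ (since $x \neq \mathrm{id}$), and $(n,c,d) = (l,a,b)$. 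This gives the stated set.

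The main obstacle is this kernel and sign bookkeeping: one must check that the representation $\psi(\gamma_{k}^{0,0}, \gamma_{l}^{a,b})$ is unique, so that the listed elements are distinct and no other element of $A(E_{8})$ appears. As just shown, this reduces to the observation that $-g \notin G_{2}$ for $g \in G_{2}$. Finally, positive-dimensional orbits are exactly the $k \neq 0$ cases. Since the centralizer of $\gamma_{1}$ in $G_{2}$ is $SO(4)$, of dimension $6$, these orbits are isometric to $G_{2}/SO(4)$, consistent with Theorem~\ref{main-1}.
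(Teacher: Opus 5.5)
Your argument is correct and is essentially the paper's. You use the same two ingredients: the conjugacy of $\gamma_{1},\dots,\gamma_{7}$ in $G_{2}$, which follows from the uniqueness of the polar, and the fact that $G_{2}$-conjugation cannot introduce a nontrivial central sign in the first factor, which you handle more explicitly than the paper by tracking $\mathrm{Ker}\,\psi$.

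There is one slip in that bookkeeping. The first component forces $(e,f)\in\{(0,0),(1,0)\}$, not $\{(0,0),(0,1)\}$. The case $(1,0)$, with sign pattern $(+,-,-)$, is then excluded by the second component, again because $-x\notin G_{2}$. So $(e,f)=(0,0)$ holds as you claim.
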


\begin{proof}
By the classification of polars \cite{Chen-Nagano1}, $G_{2}$ has a unique polar, which is isometric to $G_{2}/SO(4)$.
Since $A(G_{2}) = \{ \gamma_{k} \ ; \ 1 \leq k \leq 7 \}$ is a maximal antipodal subgroup of $G_{2}$, 
\[
\{ \gamma_{k} \ ;\ 1 \leq k \leq 7 \} \subset \mathcal{O}_{G_{2}}(\gamma_{1}) \cong G_{2}/SO(4).
\]
Moreover, in $Spin(8)$, for every $h \in G_{2}$ and $(a,b) \not= (0,0)$, there does not exist $g \in G_{2}$ such that $g^{0,0} h^{0,0} (g^{0,0})^{-1} = h^{a,b}$.
Hence, we have
\[
A((G_{2}/SO(4))_{l}^{a,b}) = \{ \psi(\gamma_{k}^{0,0}, \gamma_{l}^{a,b}) \ ;\ 1 \leq k \leq 7 \}.
\]
Finally,
\[
A(E_{8}) = \{ \psi(\gamma_{0}^{0,0}, \gamma_{l}^{a,b}) \ ;\ 0 \leq l \leq 7, a,b = 0, 1 \} \sqcup \bigsqcup_{0 \leq l \leq 7, a,b = 0,1}A((G_{2}/SO(4))_{l}^{a,b}),
\]
which completes the proof.
\end{proof}

Then,
\[
(G_{2}/SO(4))_{0}^{a,b} \subset FI_{+}, \ 
(G_{2}/SO(4))_{k}^{a,b} \subset FI_{k} \quad (1 \leq k \leq 7, a,b = 0,1).
\]
Furthermore,
\[
\psi(\gamma_{0}^{0,0}, \gamma_{0}^{a,b}) \in FII_{+}, \ 
\psi(\gamma_{0}^{0,0}, \gamma_{k}^{a,b}) \in FI_{k} \quad (1 \leq k \leq 7, (a,b) \not= (0,0)).
\]


\subsection{The classification of orbits}

Combining Lemmas \ref{E_{8}}, \ref{E_{7}}, \ref{E_{6}}, \ref{F_{4}}, and \ref{G_{2}}, we obtain the following classification.

\begin{thm} \label{main-2}
For each $G \in \{ G_{2}, F_{4}, E_{6}, E_{7}, E_{8} \}$, all possible orbits $\mathcal{O}_{G}(p)$ with $p \in A(E_{8})$ are listed in Table \ref{classification}.
All inclusion relations among the positive-dimensional orbits except $G_{2}/SO(4)$ are given by the following diagrams.
An arrow indicates inclusion of the orbit at its tail into the orbit at its head.
\[
\small
\xymatrix@C=5pt@R=11pt
{
 & & FI_{1} \ar[d] & & FI_{2} \ar[d] & & \\
 & & EII_{1} \ar[d] & & EII_{2} \ar[d] & & \\
FII_{+} \ar[r] & EIII_{+} \ar[r] & EVI_{+} \ar[dr] & & EV_{1} \ar[dl] & EII_{3} \ar[l] & FI_{3} \ar[l] \\
 & & & EVIII_{+} & & & \\
FI_{7} \ar[r] & EI_{7} \ar[r] & EV_{3} \ar[ur] & & EV_{2} \ar[ul] & EI_{4} \ar[l] & FI_{4} \ar[l] \\
 & & EI_{6} \ar[u] & & EI_{5} \ar[u] & & \\
 & & FI_{6} \ar[u] & & FI_{5} \ar[u] & & \\
}
\xymatrix@C=5pt@R=11pt
{
 & & FII_{1} \ar[d] & & FII_{2} \ar[d] & & \\
 & & EIII_{1} \ar[d] & & EIII_{2} \ar[d] & & \\
FI_{+} \ar[r] & EII_{+} \ar[r] & EVI'_{+} \ar[dr] & & EVII_{1} \ar[dl] & EIII_{3} \ar[l] & FII_{3} \ar[l] \\
 & & & EIX_{+} & & & \\
FII_{7} \ar[r] & EIV_{7} \ar[r] & EVII_{3} \ar[ur] & & EVII_{2} \ar[ul] & EIV_{4} \ar[l] & FII_{4} \ar[l] \\
 & & EIV_{6} \ar[u] & & EIV_{5} \ar[u] & & \\
 & & FII_{6} \ar[u] & & FII_{5}. \ar[u] & & \\
}
\]
Furthermore, for $1\leq k\leq7$ and $a,b = 0,1$,
\[
(G_{2}/SO(4))_{0}^{a,b}\subset FI_{+},
\qquad
(G_{2}/SO(4))_{k}^{a,b}\subset FI_{k}
\]

\begin{table}[h]
\caption{The classification of orbits} \label{classification}
\centering
\begin{tabular}{l|l|lll} \hline
$G$ & \text{zero-dimensional orbits} & \text{positive-dimensional orbits} \\ \hline
$E_{8}$ & $\{ r_{0} \}$ & 
$\begin{array}{llll} 
EVIII_{+} = \mathcal{O}_{E_{8}}(p_{0}), \\ 
EIX_{+} = \mathcal{O}_{E_{8}}(q_{0})
\end{array}$ \\ \hline
$E_{7}$ & $\{ r_{0} \}, \{ r_{1} \}$ & 
$\begin{array}{llllllll}
EVI_{+} = \mathcal{O}_{E_{7}}(p_{0}), & EV_{i} = \mathcal{O}_{E_{7}}(p_{2i}), \\ 
EVI_{+}' = \mathcal{O}_{E_{7}}(q_{0}), & EVII_{i} = \mathcal{O}_{E_{7}}(q_{2i}) \\
\end{array}$ \quad $(1 \leq i \leq 3)$ \\ \hline
$E_{6}$ & 
$\{ r_{0} \}, \{ r_{1} \}, \{ r_{2} \}, \{ r_{3} \}$ &
$\begin{array}{lllllllll}
EIII_{+} = \mathcal{O}_{E_{6}}(p_{0}), & EII_{i} = \mathcal{O}_{E_{6}}(p_{i}), & EI_{j} = \mathcal{O}_{E_{6}}(p_{j}), & \\
EII_{+} = \mathcal{O}_{E_{6}}(q_{0}), & EIII_{i} = \mathcal{O}_{E_{6}}(q_{i}), & EIV_{j} = \mathcal{O}_{E_{6}}(q_{j}) & \\
\end{array}$
$\Big( \ \begin{matrix} 1 \leq i \leq 3 \\ 4 \leq j \leq 7 \end{matrix} \ \Big)$ \\ \hline
$F_{4}$ &
$\begin{matrix}
\{ r_{0} \}, \{ r_{1} \}, \{ r_{2} \}, \{ r_{3} \}, \\
\{ r_{4} \}, \{ r_{5} \}, \{ r_{6} \}, \{ r_{7} \}
\end{matrix}$ &
$\begin{array}{lllllllllll}
FII_{+} = \mathcal{O}_{F_{4}}(p_{0}), & FI_{k} = \mathcal{O}_{F_{4}}(p_{k}), \\
FI_{+} = \mathcal{O}_{F_{4}}(q_{0}), & FII_{k} = \mathcal{O}_{F_{4}}(q_{k}) \\
\end{array}$
\quad $(1 \leq k \leq 7)$ \\ \hline
$G_{2}$ & 
$\begin{matrix}
\{ \psi(\gamma_{0}^{0,0}, \gamma_{l}^{a,b}) \}, \\
(0 \leq l \leq 7, a,b = 0,1)
\end{matrix}$ &
$(G_{2}/SO(4))_{l}^{a,b} = \mathcal{O}_{G_{2}}(\psi(\gamma_{1}^{0,0}, \gamma_{l}^{a,b})) \quad  (0 \leq l \leq 7, a,b = 0,1)$ \\ \hline
\end{tabular}
\end{table}

\end{thm}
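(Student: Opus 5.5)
The plan is to assemble Theorem \ref{main-2} from the orbit-by-orbit lemmas and to reduce every inclusion question between orbits to a question about the finite sets $A(\cdot) = A(E_{8}) \cap (\cdot)$. The classification of orbits, i.e.\ Table \ref{classification}, is just the union of Lemmas \ref{E_{8}}, \ref{E_{7}}, \ref{E_{6}}, \ref{F_{4}} and \ref{G_{2}}: each lemma partitions $A(E_{8})$ according to the $G$-orbit through each point, names the orbit, and identifies its isometry type by $\dim F^{+}(\mathrm{Ad}(p),\frak{g})$ via Table \ref{involutions}. The arrows drawn in the diagrams are exactly the inclusions already recorded in Lemmas \ref{E_{7}}, \ref{E_{6}}, \ref{F_{4}} and after Lemma \ref{G_{2}}. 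Each of these follows from one elementary fact. If $p \in \mathcal{O}_{H}(q)$ with $H \subset G$, then $\mathcal{O}_{H}(q) = \mathcal{O}_{H}(p) \subset \mathcal{O}_{G}(p)$. So it suffices to check, for instance, that $p_{1} \in EVI_{+}$ by comparing the explicit sets $A(EII_{1})$ and $A(EVI_{+})$.

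The real content is that there are no further inclusions. The key observation I will use is that every positive-dimensional orbit $\mathcal{O}$ in the list meets $A(E_{8})$, and the sets $A(\mathcal{O})$ have been computed explicitly. If $\mathcal{O} \subset \mathcal{O}'$, then $A(\mathcal{O}) \subset A(\mathcal{O}')$. Hence non-inclusion follows whenever $A(\mathcal{O}) \not\subset A(\mathcal{O}')$, and this is a finite bookkeeping check on the labels $\psi(\gamma_{k}^{0,0},\gamma_{l}^{a,b})$. I will organise the check in three steps.

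\begin{itemize}
\item First, orbits of the same group $G$ are distinct and pairwise disjoint, since their $A$-sets are disjoint.
\item Second, any orbit lying in $EVIII_{+}$ cannot lie in $EIX_{+}$, and conversely, since $A(EVIII_{+}) \cap A(EIX_{+}) = \emptyset$. This already splits everything into the two diagrams.
\item Third, within one diagram, for each smaller orbit I read off its $A$-set. For example, $A(FI_{k})$ has second index $k$ and first index $1,\dots,7$. I then check that it is contained only in the $A$-sets of the orbits along the drawn path. For $FI_{k}$ these are $EII_{k}$ or $EI_{k}$, then the appropriate $EVI_{+}$ or $EV_{i}$ with $k \in \{2i, 2i+1\}$, then $EVIII_{+}$.
\end{itemize}

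Transitivity of the drawn arrows accounts for all composite inclusions, and every other pair fails the $A$-set containment. An orbit of a larger group is never contained in one of a smaller or equal-dimensional group, for dimension reasons.

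The main obstacle is a gap in this criterion: $A(\mathcal{O}) \subset A(\mathcal{O}')$ is necessary but not obviously sufficient. I must therefore make sure that no undrawn pair passes the test. If some pair did pass, I would fall back on the invariant already used in the proofs of Lemmas \ref{EV_{i}} and \ref{EII_{i}EI_{j}}. That invariant is the restriction of $\mathrm{Ad}(p)$ to $\frak{su}^{\alpha}(2)$ or $\frak{su}^{\alpha,\beta}(3)$, or the commutation with the central elements $r_{i}$, and it is constant along $E_{7}$- and $E_{6}$-orbits. I expect, though, that the $A$-sets separate everything, since their second indices are distinct. Finally, for $G_{2}/SO(4)$, Lemma \ref{G_{2}} gives $A((G_{2}/SO(4))_{l}^{a,b})$. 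Comparing with $A(FI_{+})$ and $A(FI_{k})$, and using $(G_{2}/SO(4))_{l}^{a,b} \ni \psi(\gamma_{1}^{0,0},\gamma_{l}^{a,b})$, gives the stated inclusions. Since there are four choices of $(a,b)$ for fixed $l$, each $FI$ contains exactly four of these orbits.
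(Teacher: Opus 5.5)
Your proposal is correct and follows the paper's route. The paper proves the theorem by combining Lemmas \ref{E_{8}}, \ref{E_{7}}, \ref{E_{6}}, \ref{F_{4}} and \ref{G_{2}}, whose inclusions come from $p\in\mathcal{O}_{H}(q)\Rightarrow\mathcal{O}_{H}(q)=\mathcal{O}_{H}(p)\subset\mathcal{O}_{G}(p)$, and your explicit $A$-set comparison is the implicit reason no undrawn inclusion occurs; checking the listed sets confirms that no undrawn pair passes it.

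One small slip: dimension alone does not exclude an $EIV_{j}$ (dimension $26$) inside an $FI$-orbit (dimension $28$). Your $A$-set check does exclude it. So does the general fact that $\mathcal{O}_{G}(p)\subset\mathcal{O}_{H}(q)$ with $H\subset G$ forces $\mathcal{O}_{H}(q)=\mathcal{O}_{H}(p)\subset\mathcal{O}_{G}(p)$, hence equality.
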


\begin{remark}

For $\frak{g} = \frak{f}_{4}, \frak{e}_{6}, \frak{e}_{7}, \frak{e}_{8}$, define an equivalence relation $\sim_{\frak{g}}$ on $A(E_{8})$ as follows.
For $p = \psi(a,b)$ and $q = \psi(c,d) \in A(E_{8})$, set $p \sim_{\frak{g}} q$ if and only if $p$ and $q$ satisfy the follwing conditions:
\[
\begin{split}
& \text{(1)}\ 
\dim \mathrm{Ad}(p)|_{\frak{g}} = \dim \mathrm{Ad}(q)|_{\frak{g}} \ \text{and}\ 
\dim \mathrm{Ad}(p)|_{\frak{g}^{\perp}} = \dim \mathrm{Ad}(q)|_{\frak{g}^{\perp}}, \\
& \text{(2)}\ 
\pi_{V}(\Delta_{8}^{+}(b)|_{V}) = \pi_{V}(\Delta_{8}^{+}(d)|_{V}),
\end{split}
\]
where $V$ is given by
\[
V = 
\begin{cases} 
\mathbb{O}^{\perp} = \{ 0 \} & (\frak{g} = \frak{e}_{8}), \\
\mathbb{H}^{\perp} & (\frak{g} = \frak{e}_{7}), \\
\mathbb{C}^{\perp} & (\frak{g} = \frak{e}_{6}), \\
\mathbb{R}^{\perp} & (\frak{g} = \frak{f}_{4}), \\
\end{cases}
\]
and $\mathbb{O}^{\perp}, \mathbb{H}^{\perp}, \mathbb{C}^{\perp},$ and $\mathbb{R}^{\perp}$ are the orthogonal complements in $\mathbb{O}$, and $\pi_{V}$ is the natural projection $\pi_{V} : GL(V) \to PGL(V)$.
Note that $\pi_{V}(\Delta^{+}(b)|_{V}) = \pi_{V}(\Delta^{+}(d)|_{V})$ if and only if $\pi_{V}(\Delta^{-}(b)|_{V}) = \pi_{V}(\Delta^{-}(d)|_{V})$ in this case.
Then, each equivalence class corresponds to a $G$-orbit one-to-one.
Thus, the $G$-orbits are characterized by the adjoint representation and the structure arising from the chain
\[
\mathbb{R} \subset \mathbb{C} \subset \mathbb{H} \subset \mathbb{O}.
\]
We also observe a numerical pattern related to this chain. 
As the group becomes larger along $F_{4} \subset E_{6} \subset E_{7} \subset E_{8}$, the numbers of positive dimensional orbits of the smaller group contained in an orbit of the larger group are 1,2, and 4. 
These numbers correspond to the dimensions of the relevant orthogonal complements arising from the chain $\mathbb{R} \subset \mathbb{C} \subset \mathbb{H} \subset \mathbb{O}$.
Furthermore, the numbers of fixed points also correspond to the dimensions of each of $\mathbb{R,C,H,O}$.
However, we do not have a conceptual explanation for this phenomenon.

\end{remark}

Since all the orbits are totally geodesic submanifolds of $E_{8}$, each inclusion displayed in Theorem \ref{main-2} is totally geodesic.
Combining Theorem \ref{main-2} with some arguments about the nonexistence, we obtain the following.

\begin{coro} \label{main-3}
Let $M$ and $N$ be distinct symmetric spaces appearing in the following diagram.
\[
\small
\xymatrix@C=20pt@R=15pt
{
 & & & E_{8} & & & \\
 & & EVIII \ar[ur] & & EIX \ar[ul] & &  \\
 & EV \ar[ur] & & EVI \ar[ul] \ar[ur] & & EVII \ar[ul] & \\
 EI  \ar[ur] & & EII \ar[ul] \ar[ur] & & EIII \ar[ul] \ar[ur] & & EIV \ar[ul] \\
 & FI \ar[ul] \ar[ur] & & & & FII. \ar[ul] \ar[ur] & \\
}
\]
Then, there exists a totally geodesic embedding $M \to N$ if and only if there is a directed path from $M$ to $N$ in the diagram.
\end{coro}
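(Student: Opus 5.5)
The proof has two directions: existence from the inclusions already built, and non-existence from invariants.

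\emph{Existence.} Every arrow in the diagram of Corollary \ref{main-3} is realized by an inclusion of orbits in Theorem \ref{main-2}. The inclusions needed are:
\begin{itemize}
\item $FI\to EI$ via $FI_{7}\subset EI_{7}$, and $FI\to EII$ via $FI_{+}\subset EII_{+}$;
\item $FII\to EIII$ via $FII_{+}\subset EIII_{+}$, and $FII\to EIV$ via $FII_{7}\subset EIV_{7}$;
\item $EI\to EV$, $EII\to EV$, $EII\to EVI$, $EIII\to EVI$, $EIII\to EVII$ and $EIV\to EVII$ from Lemma \ref{E_{6}};
\item $EV, EVI\to EVIII$ and $EVI, EVII\to EIX$ from Lemma \ref{E_{7}};
\item $EVIII, EIX\to E_{8}$, since these are polars of $E_{8}$.
\end{itemize}
By Theorem \ref{main-1}, every orbit is totally geodesic in $E_{8}$. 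An inclusion $M\subset N$ of totally geodesic submanifolds of $E_{8}$ is therefore totally geodesic in $N$. Composing along a directed path then gives a totally geodesic embedding.

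\emph{Non-existence.} We must show there is no totally geodesic embedding $M\to N$ when no directed path joins them. We use three invariants, each monotone under totally geodesic embeddings:
\begin{itemize}
\item the dimension;
\item the rank, since a maximal flat of $M$ lies in a flat of $N$;
\item the $2$-number $\#_{2}$, since an antipodal set of a totally geodesic submanifold is antipodal in the ambient space.
\end{itemize}
We also use two structural obstructions. The isotropy subalgebra of $M$ must embed into that of $N$ through the induced Lie triple system, together with its action on the tangent space. In addition, the isometry group of $M$ must map into that of $N$ up to finite covers, so $\frak{e}_{6}$ cannot embed in $\frak{f}_{4}$, and so on.

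Most pairs with no path are handled by dimension or rank alone. The ranks are $FI{:}4$, $FII{:}1$, $EI{:}6$, $EII{:}4$, $EIII{:}2$, $EIV{:}2$, $EV{:}7$, $EVI{:}4$, $EVII{:}3$, $EVIII{:}8$, $EIX{:}4$, $E_{8}{:}8$. Rank alone excludes, for example, $FI\to EIII, EIV, EVII$, $EI\to EVI, EVII, EIX$, and $EV\to EIX$.

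The hard cases are those where dimension and rank both allow an embedding. I would treat them as follows.
\begin{itemize}
\item $FII\to EII$, $EI$, $EV$. The Cayley plane $FII=\mathbb{O}P^{2}$ has rank $1$. Its embeddings as a totally geodesic submanifold force sectional curvature and isotropy $\frak{spin}(9)$ conditions, since the isotropy of $N$ must contain $\frak{spin}(9)$ acting with a $16$-dimensional spin module. The isotropy of $EII$ is $\frak{sp}(1)\oplus\frak{su}(6)$, of $EI$ is $\frak{sp}(4)$, and of $EV$ is $\frak{su}(8)$. None of these contains $\frak{spin}(9)$, as a dimension/rank check of the subalgebras shows.
\item $EIII\to EV$. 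Compare $\frak{spin}(10)\oplus\mathbb{R}$ with $\frak{su}(8)$. The spin module structure fails: $\frak{spin}(10)$ embeds in $\frak{su}(8)$ only trivially, since its smallest nontrivial representation has dimension $10$.
\item $EIV\to EVIII, EIX$ and $EVII\to EVIII$. Use the isotropy $\frak{f}_{4}$, respectively $\frak{e}_{6}$. These would have to embed in $\frak{spin}(16)$, which forces a representation of $\frak{e}_{6}$ or $\frak{f}_{4}$ of dimension at most $16$. That is impossible for $\frak{e}_{6}$ (minimal dimension $27$) and for $\frak{f}_{4}$ (minimal dimension $26$).
\item $EV\to EIX$. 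This is already excluded by rank.
\item $EI\to EVIII$. This case is fine, because a path through $EV$ exists.
\end{itemize}

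The main obstacle is making the isotropy-embedding argument rigorous. A totally geodesic $M\subset N$ gives a Lie triple system $\frak{m}\subset\frak{p}_{N}$, and $[\frak{m},\frak{m}]$ embeds in $\frak{k}_{N}$. For $M$ irreducible, $[\frak{m},\frak{m}]$ is the full isotropy algebra $\frak{k}_{M}$. I would verify each remaining pair case by case using minimal representation dimensions. Where that is delicate, I would fall back on Chen–Nagano's classification of maximal totally geodesic submanifolds via $(M^{+},M^{-})$ and on the Kollross–Rodr\'{i}guez-V\'{a}zquez list of maximal totally geodesic submanifolds in the dual noncompact spaces.
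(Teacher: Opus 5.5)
Your existence half is correct and is the paper's argument. The non-existence half uses the same tools as the paper: rank, and $[\frak{m},\frak{m}]=\frak{k}_{M}\hookrightarrow\frak{k}_{N}$ combined with minimal representation dimensions. As written, however, it has three concrete defects.

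First, the case $FII\to EV$ is not settled by a dimension/rank check. $\frak{spin}(9)$ has dimension $36$ and rank $4$, while $\frak{su}(8)$ has dimension $63$ and rank $7$. You need the argument you used for $\frak{spin}(10)$: every complex representation of $\frak{spin}(9)$ of dimension less than $9$ is trivial, so $\frak{spin}(9)$ admits no nonzero homomorphism into $\frak{su}(8)$. This is exactly one of the two key cases in the paper's proof.

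Second, you list $EIV\to EIX$ as a non-existence case. But $EIV\to EVII\to EIX$ is a directed path ($EIV_{4}\subset EVII_{2}\subset EIX_{+}$), so this contradicts your own existence half. Your $\frak{spin}(16)$ argument does not apply there anyway, since the isotropy of $EIX$ is $\frak{sp}(1)\oplus\frak{e}_{7}\supset\frak{f}_{4}$.

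Third, your case list is incomplete. Pairs with $\dim M>\dim N$ are trivial, but many others are excluded neither by dimension nor by rank and are absent from your hard cases. Examples are $EIV\to EV$, $EIV\to EVI$, $EVI\to EV$, $EVII\to EV$, $EVII\to EVI$, $EIX\to EVIII$, $EIII\to EI$, $EIII\to EII$, $EIV\to EIII$, $FII\to FI$ and $EII\to EI$.

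The idea you are missing is the paper's reduction by composition. If $M'\to M$ and $N\to N'$ exist, a totally geodesic embedding $M\to N$ would give one $M'\to N'$. This reduces the non-existence part to four base cases:
\begin{itemize}
\item $FI\to EVII$ and $EI\to EIX$, excluded by rank;
\item $FII\to EV$, excluded since $\frak{spin}(9)\not\subset\frak{su}(8)$;
\item $EIV\to EVIII$, excluded since $\frak{f}_{4}\not\subset\frak{spin}(16)$.
\end{itemize}
For example, $EIX\to EVIII$ would give $EIV\to EVII\to EIX\to EVIII$, and $EVI\to EV$ would give $FII\to EIII\to EVI\to EV$. Likewise $FII\to FI$ would give $FII\to FI\to EI\to EV$, and $EIV\to EIII$ would give $EIV\to EIII\to EVI\to EVIII$.

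The only pair with $\dim M<\dim N$ that this reduction does not reach is $EII\to EI$, because both $FI\to EI$ and $EII\to EV$ exist. There your direct isotropy comparison $\dim(\frak{sp}(1)\oplus\frak{su}(6))=38>36=\dim\frak{sp}(4)$ is exactly what is needed; the paper's proof does not spell this case out. So your toolkit is adequate. Organize the non-existence proof around the composition reduction and the four base cases, fix the $FII\to EV$ justification, drop $EIV\to EIX$, and treat $EII\to EI$ separately.
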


\begin{proof}
If there exists a directed path from $M$ to $N$ in the diagram, then the corresponding chain of inclusions in Theorem \ref{main-2} yields a totally geodesic embedding $M\hookrightarrow N$.
Therefore, it suffices to prove the nonexistence of totally geodesic embeddings for pairs that are not connected by a directed path.
First, we show that there exist no totally geodesic embeddings 
\[
\begin{array}{llll}
FI \to EIII, & FI \to EIV, & FI \to EVII, & EII \to EVII, \\
EI \to EVI, & EI \to EVII, & EI \to EIX, & EV \to EIX.
\end{array}
\]
It suffices to consider the case of $FI \to EVII$ and $EI \to EIX$.
Since
\[
\mathrm{rank}\,FI = 4 > 3 = \mathrm{rank}\,EVII, \quad
\mathrm{ranK}\,EI = 6 > 4 = \mathrm{rank}\,EIX,
\]
no such embeddings exist.
Next, we show that there exist no totally geodesic embeddings
\[
\begin{array}{llll}
FII \to EI, & FII \to EII, & FII \to EV, & EIII \to EV, \\
EIV \to EV, & EIV \to EVI, & EVI \to EVIII, & EVII \to EVIII.
\end{array}
\]
It suffices to consider the case of $FII \to EV$ and $EIV \to EVIII$.
Suppose, for contradiction, that there exists a totally geodesic embedding $FII \to EV$.
Since $FII = F_{4}/Spin(9)$ and $EV = E_{7}/(SU(8)/\mathbb{Z}_{2})$, we have $Spin(9) \subset SU(8)/\mathbb{Z}_{2}$.
In particular, $Spin(9) \subset SU(8)$ since $Spin(9)$ is simply connected.
Hence, we have a complex representation of $Spin(9)$ on $\mathbb{C}^{8}$.
Since the smallest nontrivial irreducible complex representation of of $Spin(9)$ has dimension $9$, this representation must be trivial.
On the other hand, every maximal torus of $Spin(9)$ is contained in a maximal torus of $SU(8)$, so some element of a maximal torus of $Spin(9)$ acts nontrivially on $\mathbb{C}^{8}$.
This contradiction shows that no such embeddings $FII \to EV$ exist.
For the case of $EIV \to EVIII$, suppose that there exists such an embedding.
Then, $F_{4} \subset Spin(16)$ and we have a representation of $F_{4}$ on $\mathbb{R}^{16}$.
Since the smallest nontrivial irreducible real representation of $F_{4}$ has dimension $26$, we again obtain a contradiction.
Therefore, no totally geodesic embeddings $EIV \to EVIII$ exist.
\end{proof}

For the symmetric space $G_{2}/SO(4)$, there exists a totally geodesic embedding $G_{2}/SO(4) \to FI$.
However, there are no totally geodesic embeddings such that $G_{2}/SO(4) \to FII$ since $\mathrm{rank}(G_{2}/SO(4)) = 2 > 1 = \mathrm{rank}FII$.


\subsection{Geometric features of the chains}

We observe that several chains in the diagram of Cororally \ref{main-3} have distinctive geometric features.
The symmetric spaces in the chain
\[
FI \rightarrow EII \rightarrow EVI \rightarrow EIX
\]
are all Wolf spaces, namely quaternionic K\"{a}hler symmetric spaces, and each inclusion is a totally geodesic quaternionic embedding.
Recall that any quaternionic submanifold of a quaternionic K\"{a}hler submanifold is totally geodesic \cite{Alekseevsky}. 
Likewise, the symmetric spaces in the chain
\[
EIII \rightarrow EVII
\]
are all Hermitian symmetric spaces, and the inclusion is a totally geodesic complex embedding.
The embeddings
\[
EIII \rightarrow EVI, \quad
EVII \rightarrow EIX
\]
are totally geodesic totally complex embeddings, which form a special class of embeddings of a complex manifold into a quaternionic manifold.
Totally complex embeddings were introduced by Funabashi as an analogue of totally real embeddings \cite{Funabashi}.
On the other hand, the embeddings
\[
FII \rightarrow EIII, \quad
EIV \rightarrow EVII
\]
are totally real.
In particular, $FII$ is a Lagrangian submanifold of $EIII$, whereas $EIV$ is not Lagrangian in $EVII$ since $\dim EIV = 26 < 27 = (1/2)\dim EVII$.

The chain
\[
FII \to EIII \to EVI \to EVIII,
\]
consists of the Rosenfeld planes.
The Rosenfeld planes were introduced by Rosenfeld as a generalization of projective planes \cite{Rosenfeld} and are represented by
\[
\begin{array}{ll}
FII = \mathbb{O}P^{2} = (\mathbb{O} \otimes \mathbb{R})P^{2}, &
EIII = (\mathbb{O} \otimes \mathbb{C})P^{2}, \\
EVI = (\mathbb{O} \otimes \mathbb{H})P^{2}, &
EVIII = (\mathbb{O} \otimes \mathbb{O})P^{2}.
\end{array}
\]

The two chains
\[
FI \to EI \to EV \to EVIII, \quad FII \to EIV \to EVII \to EIX
\]
exhibit another common feature, which is visible from their Satake diagrams.
Let $\frak{g}$ be the Lie algebra of the isometry group and $\frak{g} = \frak{k} \oplus \frak{m}$ the standard decomposition associated with the involution, where $\frak{k}$ and $\frak{m}$ are the $\pm1$-eigenspaces, respectively.
Their Satake diagrams are as follows:
\[
\begin{array}{lclccccc}
FI &
\xymatrix@C=10pt@R=10pt
{
\circ \ar@{-}[r] & \circ \ar@{<=}[r] & \circ \ar@{-}[r] & \circ
} & 
FII &
\xymatrix@C=10pt@R=10pt
{
\circ \ar@{-}[r] & \bullet \ar@{<=}[r] & \bullet \ar@{-}[r] & \bullet
} \\ 
 \\
EI &
\xymatrix@C=10pt@R=10pt
{
\circ \ar@{-}[r] & \circ \ar@{-}[r] & \circ \ar@{-}[r] \ar@{-}[d] & \circ \ar@{-}[r] & \circ \\
 & & \circ & & \\
} &
EIV &
\xymatrix@C=10pt@R=10pt
{
\circ \ar@{-}[r] & \bullet \ar@{-}[r] & \bullet \ar@{-}[r] \ar@{-}[d] & \bullet \ar@{-}[r] & \circ \\
 & & \bullet & & \\
} \\
EV & 
\xymatrix@C=10pt@R=10pt
{
\circ \ar@{-}[r] & \circ \ar@{-}[r] & \circ \ar@{-}[r]  & \circ \ar@{-}[r] \ar@{-}[d] & \circ \ar@{-}[r] & \circ \\
 & & & \circ & & \\
} &
EVII & 
\xymatrix@C=10pt@R=10pt
{
\circ \ar@{-}[r] & \circ \ar@{-}[r] & \bullet \ar@{-}[r] & \bullet \ar@{-}[r] \ar@{-}[d] & \bullet \ar@{-}[r] & \circ \\
 & & & \bullet & & \\
} \\
EVIII & 
\xymatrix@C=10pt@R=10pt
{
\circ \ar@{-}[r] & \circ \ar@{-}[r] & \circ \ar@{-}[r] & \circ \ar@{-}[r]  & \circ \ar@{-}[r] \ar@{-}[d] & \circ \ar@{-}[r] & \circ \\
 & & & & \circ & & \\
} &
EIX & 
\xymatrix@C=10pt@R=10pt
{
\circ \ar@{-}[r] & \circ \ar@{-}[r] & \circ \ar@{-}[r] & \bullet \ar@{-}[r] & \bullet \ar@{-}[r] \ar@{-}[d] & \bullet \ar@{-}[r] & \circ \\
 & & & & \bullet & & \\
} \\
\end{array}
\]
For the first chain $FI \to EI \to EV \to EVIII$, the Satake diagrams coincide with the base Dynkin diagrams.
In contrast, for the second chain $FII \to EIV \to EVII \to EIX$, the black vertices form the Dynkin diagrams of type $B_{3}$ or $D_{4}$, corresponding to $\frak{spin}(7)$ or $\frak{spin}(8)$.
Hence, for any maximal abelian subalgebra $\frak{a}$ of $\frak{m}$, 
\[
C(\frak{a}, \frak{k}) = \{ 0 \}
\]
along the first chain, whereas
\[
C(\frak{a}, \frak{k}) = \begin{cases} \frak{spin}(7) & (FII), \\ \frak{spin}(8) & (EIV, EVII, EIX) \end{cases}
\]
along the second chain.
Within each row of the inclusion diagram, the spaces in the first chain have the smallest possible value of $\dim C(\frak{a}, \frak{k})$, while those in the second chain have the largest.

These observations indicate that the chains appearing in Corollary \ref{main-3} are not arbitrary, but reflect several well-known geometric structures on exceptional symmetric spaces.


\section{A symmetry among exceptional symmetric spaces}

The inclusion diagram in Corollary \ref{main-3} exhibits a remarkable left-right symmetry. 
It is natural to ask whether this symmetry reflects some geometric relationship among exceptional symmetric spaces. 
In this section, we present several observations suggesting that this may indeed be the case.

We begin with the totally geodesic embeddings themselves.
Let $M,M',N,$ and $N'$ be symmetyric spaces in the diagram, where $M$ and $M'$ correspond to $N$ and $N'$ under the left-right symmetry, respectively.
By Corollary \ref{main-3}, there exists a totally geodesic embedding $M \to M'$ if and only if there exists a totally geodesic embedding $N \to N'$,
Thus, the existence of totally geodesic embeddings is preserved by the left-right correspondence.


The same symmetry also appears at the level of the full isometry groups.
The isometry groups of irreducible compact symmetric spaces were classified (Chapter VII, Section 4, \cite{Loos}).
For each symmetric space $M$ in the diagram, let $E(M)$ denote the number of connected components of its isometry group.
The values of $E(M)$ are shown below:
\[
\small
\xymatrix@C=12pt@R=7pt
{
 & & & E_{8}^{2} & & & \\
 & & EVIII^{1} \ar[ur] & & EIX^{1} \ar[ul] & &  \\
 & EV^{2} \ar[ur] & & EVI^{1} \ar[ul] \ar[ur] & & EVII^{2} \ar[ul] & \\
 EI^{2}  \ar[ur] & & EII^{2} \ar[ul] \ar[ur] & & EIII^{2} \ar[ul] \ar[ur] & & EIV^{2} \ar[ul] \\
 & FI^{1} \ar[ul] \ar[ur] & & & & FII^{1}, \ar[ul] \ar[ur] & \\
}
\]
where the superscripts denotes $E(M)$.
As the diagram shows,
\[
E(M) = E(M')
\]
whenever $M$ and $M'$ correspond under the left-right symmetry.
Thus, the number of connected components of the full isometry group is preserved by this correspondence.


A similar phenomenon occurs for local isometry classes.
If $M$ is not one of $EI, EIV, EV$, and $EVII$, then its local isometry class contains only $M$.
For each of the four cases, the local isometry class contains two symmetric spaces: the simply connected space $M$ and a non-simply connected quotient, called the bottom space and denoted by $M^{*}$.
The covering maps
\[
EI \to EI^{*}, \quad
EIV \to EIV^{*},
\]
have degree $3$, whereas
\[
EV \to EV^{*}, \quad
EVII \to EVII^{*}
\]
have degree $2$.
After adjoining the bottom spaces, the diagram becomes
\[
\small
\xymatrix@C=12pt@R=7pt
{
 & & & & E_{8} & & & & \\
 & EV^{*} & & EVIII \ar[ur] & & EIX \ar[ul] & & EVII^{*} & \\
EI^{*} & & EV \ar[ur] \ar[ul]_{\text{2-fold}} & & EVI \ar[ul] \ar[ur] & & EVII \ar[ul] \ar[ur]^{\text{2-fold}} & & EIV^{*} \\
 & EI  \ar[ur] \ar[ul]_{\text{3-fold}} & & EII \ar[ul] \ar[ur] & & EIII \ar[ul] \ar[ur] & & EIV \ar[ul] \ar[ur]^{\text{3-fold}} & \\
 & & FI \ar[ul] \ar[ur] & & & & FII. \ar[ul] \ar[ur] & & \\
}
\]
Thus, the number of symmetric spaces in each local isometry class is invariant under the left-right symmetry.
For the four nontrivial local isometry classes, the degree of the covering onto the bottom space is also preserved.


The symmetry becomes particularly suggestive when we consider polars.
The plolars of $F_{4}, E_{6}, E_{7}, E_{8}$ are listed in Table \ref{polars} of Subsection \ref{polars and antipodal sets}.
Their inclusion relations are as follows:
\[
\xymatrix@C=12pt@R=7pt
{
FI \ar[r]^{} \ar[dr]^{} & EII \ar[r]^{} \ar[dr]^{} & EVI \ar[r]^{} \ar[dr]^{} & EIX \ar[dr]^{} & \\
& F_{4} \ar[r]^{} & E_{6}\ar[r]^{} & E_{7}\ar[r]^{} & E_{8}. \\
FII \ar[r]_{} \ar[ur]_{} & EIII \ar[r]_{} \ar[ur]_{} & EVI \ar[r]_{} \ar[ur]_{} & EVIII \ar[ur]_{} & \\
}
\]
The polars in the first row are the polars nearest to the base point and are Wolf spaces.
Those in the third row are the polars farthest from the base point and Rosenfel planes.
These two chains correspond to the following part of the diagram:
\[
\small
\xymatrix@C=12pt@R=7pt
{
 & & & E_{8} & & & \\
 & & EVIII \ar[ur] & & EIX \ar[ul] & &  \\
 & EV \ar@{.}[ur] & & EVI \ar[ul] \ar[ur] & & EVII \ar@{.}[ul] & \\
 EI  \ar@{.}[ur] & & EII \ar@{.}[ul] \ar[ur] & & EIII \ar[ul] \ar@{.}[ur] & & EIV \ar@{.}[ul] \\
 & FI \ar@{.}[ul] \ar[ur] & & & & FII. \ar[ul] \ar@{.}[ur] & \\
}
\]
Thus, the two distinguished chains of polars are exchanged by the left-right symmetry of the inclusion diagram.


The same correspondence also appears when we distinguish totally geodesic embeddings according to reflectivity.
Let $N$ be a submanifold of $M$.
The submanifold $N$ is called a {\it reflective} submanifold, if it is a connected component of the fixed point set of an involutive isometry of $M$.
Leung classified the reflective submanifolds of compact symmetric spaces \cite{Leung}.
Using $\to$ for reflective inclusions and $\Rightarrow$ for non-reflective inclusions, the diagram becomes
\[
\small
\xymatrix@C=12pt@R=7pt
{
 & & & E_{8} & & & \\
 & & EVIII \ar@{->}[ur]_{} & & EIX \ar@{->}[ul]^{} & &  \\
 & \ EV \ar@{=>}[ur]^{\text{non-reflective \quad\ }} & & EVI \ar@{->}[ul]^{} \ar@{->}[ur]_{} & & EVII \ar@{=>}[ul]_{\text{\quad\ \  non-reflective}} & \\
 \ EI \ \ar@{=>}[ur]^{\text{non-reflective \quad\ }} & & EII \ar@{->}[ul]^{} \ar@{->}[ur]_{} & & EIII \ar@{->}[ul]^{} \ar@{->}[ur]_{} & & EIV \ar@{=>}[ul]_{\text{\quad\ \  non-reflective}} \\
 & FI \ar@{->}[ul]^{} \ar@{->}[ur]_{} & & & & FII. \ar@{->}[ul]^{} \ar@{->}[ur]_{} & \\
}
\]
The four inclusions
\[
EI \subset EV, \qquad EV \subset EVIII, \qquad EIV \subset EVII, \qquad EVII \subset EIX
\]
are not reflective.
Nevertheless, the embeddings
\[
\begin{array}{llc} \vspace{1mm}
(S^{1} \times EI)/\mathbb{Z}_{3} \subset EV, & (S^{1} \times EIV)/\mathbb{Z}_{3} \subset EVII, \\
(S^{2} \times EV)/\mathbb{Z}_{2} \subset EVIII, & (S^{2} \times EVII)/\mathbb{Z}_{2} \subset EIX \\
\end{array}
\]
are reflective embeddings.
Thus, reflectivity is compatible with the left-right correspondence.
More strikingly, each of the four non-reflective inclusions becomes reflective after adjoining an additional irreducible factor, and the required factor is itself preserved by the correspondence: $S^{1}$ for $EI \subset EV$ and $EIV \subset EVII$, and $S^{2}$ for $EV \subset EVIII$ and $EVII \subset EIX$.


The left-right correspondence extends further to the structure of maximal totally geodesic submanifolds.
Recently, Kollross and Rodr\'{i}guez-V\'{a}zquez classified the maximal totally geodesic submanifolds of exceptional symmetric spaces of noncompact type \cite{Kollross}.
For each symmetric space $M$ in the diagram, let $L_{M}$ denote the set of isometry classes of maximal totally geodesic submanifolds of $M$ that  contain one of the exceptional symmetric spaces in the diagram as irreducible factor.
In Table \ref{maximal totally}, we list $L_{M}$ for each case.
As the table shows, if $M$ and $M'$ correspond under the left-right symmetry, then the elements of $L_{M_{1}}$ and $L_{M_{2}}$ are paired by the same correspondence.
Thus, the symmetry persists even at the level of maximal totally geodesic submanifolds containing exceptional symmetric spaces as irreducible factors.

\begin{table}[htbp]
\caption{Maximal totally geodesic submanifolds} \label{maximal totally}
\centering
\begin{tabular}{c|cccccccccc} \hline
$M$ & & & \hspace{15mm} $L_{M}$  \\ \hline\hline
$EI$ & $FI$ \\  
$EII$ & $FI$ \\
$EIII$ & $FII$ \\
$EIV$ & $FII$ \\ \hline
$EV$ & $EII$ & $(S^{1} \times EI)/\mathbb{Z}_{3}$ & $S^{2} \times FI$ \\
$EVI$ & $EII$ & $EIII$ \\
$EVII$ & $EIII$ & $(S^{1} \times EIV)/\mathbb{Z}_{3}$ & $S^{2} \times FII$ \\ \hline
$EVIII$ & $EVI$ & $(S^{2} \times EV)/\mathbb{Z}_{2}$ & $(SU(3)/SO(3) \times EI)/\mathbb{Z}_{3}$ & $\mathbb{C}P^{2} \times EII$ & $G_{2}/SO(4) \times FI$ \\
$EIX$ & $EVI$ & $(S^{2} \times EVII)/\mathbb{Z}_{2}$ & $(SU(3)/SO(3) \times EIV)/\mathbb{Z}_{3}$ & $\mathbb{C}P^{2} \times EIII$ & $G_{2}/SO(4) \times FII$ \\ \hline
\end{tabular}
\end{table}

As the table shows, if $M$ and $M'$ correspond under the left-right symmetry, then the elements of $L_{M_{1}}$ and $L_{M_{2}}$ are paired by the same correspondence.
Thus, the symmetry persists even at the level of maximal totally geodesic submanifolds containing exceptional symmetric spaces as irreducible factors.


Finally, the symmetry is already encoded in the conjugation-orbit structure underlying our construction. 
Recall the two diagrams in Theorem \ref{main-2}:
\[
\footnotesize
\hspace{-2mm}
\xymatrix@C=5pt@R=11pt
{
 & & FII \ar[d] & & FII \ar[d] & & \\
 & & EIII \ar[d] & & EIII \ar[d] & & \\
FI \ar[r] & EII \ar[r] & EVI \ar[dr] & & EVII \ar[dl] & EIII \ar[l] & FII \ar[l] \\
 & & & EIX & & & \\
FII \ar[r] & EIV \ar[r] & EVII \ar[ur] & & EVII \ar[ul] & EIV \ar[l] & FII \ar[l] \\
 & & EIV \ar[u] & & EIV \ar[u] & & \\
 & & FII \ar[u] & & FII, \ar[u] & & \\
}
\ 
\xymatrix@C=5pt@R=11pt
{
 & & FI \ar[d] & & FI \ar[d] & & \\
 & & EII \ar[d] & & EII \ar[d] & & \\
FII \ar[r] & EIII \ar[r] & EVI \ar[dr] & & EV \ar[dl] & EII \ar[l] & FI \ar[l] \\
 & & & EVIII & & & \\
FI \ar[r] & EI \ar[r] & EV \ar[ur] & & EV \ar[ul] & EI \ar[l] & FI \ar[l] \\
 & & EI \ar[u] & & EI \ar[u] & & \\
 & & FI \ar[u] & & FI. \ar[u] & & \\
}
\]
In these two diagrams, the orbit types and the multiplicities correspond under the left-right symmetry.
For instance, the left diagram contains three orbits of type $EIII$, whereas the right diagram contains three orbits of type $EII$.
The inclusion relations among the individual orbits are also preserved.
For example, the left diagram contains two chains of the form
\[
FII \to EIII \to EVII \to EIX,
\]
while the right diagram contains two corresponding chains of the form
\[
FI \to EII \to EV \to EVIII.
\]

The recurrence of the same correspondence across these different geometric settings points to a systematic and unexpected relationship among exceptional symmetric spaces. Whether this relationship can be understood in terms of a more fundamental geometric principle, or even a kind of duality, remains an open question.

\end{document}